\newcommand{\TheCodeZenodoDOI}{10.5281/zenodo.11098283}
\title{A penalty barrier framework\texorpdfstring{\\}{ }for nonconvex constrained optimization}
\shorttitle{A penalty barrier framework for constrained optimization}
\author{%
	Alberto De~Marchi\thanks{\TheAddressUBM. \email{alberto.demarchi@unibw.de}, \textsc{orcid}: \orcidLink{0000-0002-3545-6898}}%
	\and
	Andreas Themelis\thanks{\TheAddressKUJ. \email{andreas.themelis@ees.kyushu-u.ac.jp}, \textsc{orcid}: \orcidLink{0000-0002-6044-0169}}%
}
\renewcommand{\includetikz}[2][]{\includegraphics[#1]{Pics/Tikz/#2.pdf}}
	\def\and{, }%
	\def\footnotemark{}%
	\def\thanks{}%
	\def\orcidLink{}%
\begin{document}

	\maketitle

	\begin{abstract}
		We consider minimization problems with structured objective function and smooth constraints,{}  and present a flexible{}  framework that combines the beneficial regularization effects of (exact) penalty and interior-point methods.
In the fully nonconvex setting, a pure barrier approach requires careful steps when approaching the infeasible set, thus hindering convergence.
We show how a tight integration with a penalty scheme mitigates this issue and enables the construction of subproblems whose domain is independent of the explicit constraints.
This decoupling allows us to leverage efficient solvers designed for unconstrained or suitably structured optimization tasks.
The key behind all this is a marginalization step:
closely related to a conjugacy operation, this step effectively merges (exact) penalty and barrier into a smooth, full domain functional object.
When the penalty exactness takes effect, the generated subproblems do not suffer the ill-conditioning typical of barrier methods,
nor do they exhibit the nonsmoothness of exact penalty terms.
We provide a theoretical characterization of the algorithm and its asymptotic properties, deriving convergence results for fully nonconvex problems.
Stronger conclusions are available for the convex setting, where optimality can be guaranteed.
Illustrative examples and numerical simulations demonstrate the wide range of problems our theory and algorithm are able to cover.

		\bigskip
		
		\noindent
		{\color{structure}Keywords}~~%
		Nonsmooth nonconvex optimization,
		exact penalty methods,
		interior point methods,
		proximal algorithms
		
		\noindent
		{\color{structure}AMS subject classifications}~~%
		\amsmscLink{49J52},
		\amsmscLink{49J53},
		\amsmscLink{65K05},
		\amsmscLink{90C06},
		\amsmscLink{90C30}%
	\end{abstract}

	\section{Introduction}
		We are interested in developing numerical methods for constrained optimization problems of the form
\[\tag{P}\label{eq:P}
	\minimize_{\x\in\R^n}~\cost(\x)
\quad
	\stt{}~
	\c(\x)\leq\zeros,~
	\ceq(\x)=\zeros,
\]
where \(\x\) is the decision variable and \(\cost\), \(\c\) and \(\ceq\) are problem functions.
(Throughout, we stick to the convention of bold-facing vector variables and vector-valued functions, so that \(\zeros\) indicates the zero vector of suitable size and similarly \(\ones\) is the vector with all entries equal to one.)
Henceforth we consider \eqref{eq:P} under the following standing assumptions.

\begin{mybox}
	\begin{assumption}\label{ass:basic}%
		The following hold in problem \eqref{eq:P}:
		\begin{enumeratass}
		\item \label{ass:cost}%
			\(\func{\cost}{\R^n}{\Rinf}\) is proper and lower semicontinuous (lsc).
		\item \label{ass:c}%
			\(\func{\c}{\R^n}{\R^m}\) and \(\func{\ceq}{\R^n}{\R^\meq}\) are continuously differentiable.
		\item \label{ass:feas}%
			The problem is well posed:
			\(\cost*\coloneqq\inf_{\set{\x}[\c(\x)\leq\zeros,~\ceq(\x)=\zeros]}\cost(\x)\) is finite.
		\end{enumeratass}
	\end{assumption}
\end{mybox}

Notice that no differentiability requirements are imposed on the cost \(\cost\), nor convexity on any term in the formulation (hence the connotation of \emph{fully} nonconvex).
This modeling flexibility allows one to include simple constraints directly in \(\cost\), forcing all generated iterates to honor them, as an alternative to explicit constraints in the format $\c(\x)\leq\zeros$, $\ceq(\x)=\zeros$, which may be violated along the iterates.
	We remark that our framework allows for (and is robust to) equality constraints \(\ceq(\x)=\zeros\) to be expressed as two-sided inequalities \(\ceq(\x)\leq\zeros\) and \(-\ceq(\x)\leq\zeros\), despite the lack of constraint qualifications and in contrast to purely interior-point schemes, though a dedicated treatment of equalities yields an advantage in terms of algorithmic performance.

The primary objective of this paper is to devise an abstract algorithmic framework in the generality of this setting.
The methodology requires an oracle for solving, up to approximate local optimality, minimization instances of the sum of \(\cost\) with a differentiable term.
	In practice, some structure is required in the cost function \(\cost\) to efficiently address these subproblems.
	The general setting of \eqref{eq:P} under \cref{ass:basic} is considered without significantly weakening the convergence guarantees with respect to, say, assuming \(q\) smooth.
	Nevertheless, some stronger results are established under additional assumptions, such as locally Lipschitz continuity of the cost \(\cost\) or convexity of \eqref{eq:P}.
In our numerical experiments we will invoke off-the-shelf routines based on proximal gradient iterations, thereby restricting our attention to problem instances in which \(\cost\) is structured as \(\cost=f+g\) for a differentiable function \(f\) and a function \(g\) that enjoys an easily computable proximal map.
Most nonsmooth functions widely used in practice comply with all these requirements.
For instance, \(g\) can include indicators of any nonempty and closed set, and thus enforce arbitrary closed constraints that are easy to project onto.
	This modeling flexibility extends beyond the handling of constraints.
	While nonsmooth functions commonly encountered in practice can often be reformulated into smooth equivalents using slack variables and additional constraints, such reformulations typically come at the cost of introducing auxiliary variables and complicating the problem structure, penalizing algorithmic efficiency.
	By allowing nonsmooth terms to appear directly in the objective, our framework eliminates the need for such artificial constructs.

A particularly illustrative example is the so-called \(L^0\)-(pseudo)norm penalty (number of nonzero entries) \(\|\x\|_0\) for \(\x\in\R^n\).
As shown in \cite[Lem. 3.1]{bi2017multistage}, this function can be represented as the \emph{linear program}
\begin{equation}\label{eq:l0_as_linprog}
	\|\x\|_0
=
	\min_{\vec{u}\in\R^n}{}
	\|\vec{u}\|_1
\quad
	\stt{}~
	-\ones\leq\vec{u}\leq\ones,~
	\innprod{\vec{u}}{\x}
	=
	\|\x\|_1,
\end{equation}
(more generally, matrix rank can also be cast in a similar fashion).
	In turn, nonsmoothness of each of the \(L^1\)-norms can be resolved via the introduction of \(n\) slack variables and \(2n\) inequality constraints, leading to a substantial increase in both problem size and constraint count.
	In contrast, our approach accommodates nonsmooth terms such as the \(L^0\)-norm directly in the objective, avoiding any such inflation.
	The computational advantages of this modeling flexibility are also evident from the numerical experiments presented in \cref{sec:Numerics:MatrixCompletion}.

\paragraph*{Motivations and related work}
The class of problems \eqref{eq:P} with structured cost \(\cost\) has been recently studied in \cite{chouzenoux2020proximal} and \cite{demarchi2024interior}, respectively, for the fully convex and nonconvex setting, developing methods that bear strong convergence guarantees under some restrictive assumptions.
Above all, building on a pure barrier approach, these methods demand a feasible set with nonempty interior, thus excluding problems with equality constraints.
Although restricted to simple bounds, a similar interior-point technique is investigated in \cite{leconte2024interior} and manifests analogous pros and cons.
In contrast to these works, we intend to address equality constraints as well.
An augmented Lagrangian scheme for constrained structured problems was developed in \cite{demarchi2023constrained}, which also allows the specification of constraints in a function-in-set format.

Constrained structured programs \eqref{eq:P} are also closely related to the template of structured \emph{composite} optimization
\[
	\minimize_{\x\in\R^n}{}~
	\cost(\x) + h(\c(\x))
\]
with \(\func{h}{\R^m}{\Rinf}\).
By introducing additional variables, composite problems can be rewritten in (equality) constrained form recovering the class of problems \eqref{eq:P},
with a one-to-one relationship between (local and global) solutions and stationary points \cite[Lem. 3.1]{demarchi2023constrained}.
The recent literature on
structured composite optimization includes
\cite{rockafellar2022convergence}, only for convex \(h\),
and \cite{hallak2023adaptive,demarchi2024implicit} for fully nonconvex problems,
and concentrates almost exclusively on the augmented Lagrangian framework.
Relying essentially on a penalty approach, in contrast to a barrier, the algorithmic characterization in \cite{demarchi2023constrained} involved weaker assumptions and yet retrieved standard convergence results in constrained nonconvex optimization.
However, the dependency on dual estimates makes methods of this family sensitive to the initialization of Lagrange multipliers.
Moreover, they require some safeguards to ensure convergence from arbitrary starting points \cite{birgin2014practical,conn1991globally}.
In contrast, thanks to their `primal' nature and inherent regularizing effect,
penalty-barrier techniques can conveniently cope with degenerate problems.

The idea of adopting and merging penalty and barrier approaches, in a variety of possible flavors and combinations, is certainly not new, tracing back at least to \cite{fiacco1964sequential}.
Among several recent concretizations of this avenue, we refer to Curtis' work \cite{curtis2012penalty} for a comprehensive discussion and further references.
Our motivation for developing this technique for constrained structured problems
stems from insights gained during the design of the interior point scheme \ipprox{} \cite{demarchi2024interior}.
The key observation therein is that, with a pure barrier approach, the arising subproblems have a smooth term \emph{without} full domain.
This nonstandard situation, together with a nonconvex and possibly extended-real-valued cost \(\cost\) and nonlinear constraints \(\c(\x)\leq\zeros\), significantly restricts the range of subsolvers that can be employed.
As a result, one cannot fully exploit more efficient optimization routines that would otherwise be suitable in an unconstrained or more structured setting.

In the broad setting of \eqref{eq:P} under \cref{ass:basic},
a blind application of penalty-barrier strategies in the spirit of \cite{curtis2012penalty} would bear no advantages,
since the inconvenience in \ipprox{} of a restricted domain would persist,
hindering again the practical performance.
In this paper we propose and investigate in detail a simple technique to overcome this limitation.
The crucial step consists in the \emph{marginalization} of auxiliary variables:
after applying some penalty and barrier modifications,
the auxiliary variables are optimized \emph{pointwise}, for any given decision variable \(\x\).%
\footnote{%
	This approach can be interpreted as a drastic version of the so-called \emph{magical steps} \cite{conn2000trust,birgin2014practical}, or \emph{slack reset} in \cite{curtis2012penalty},
	and was inspired by the \emph{proximal} approaches in \cite{dhingra2019proximal,demarchi2024implicit}.%
}
Before proceeding with the technical content,
we emphasize that the marginalization step not only reduces the subproblems' size (recovering that of the original decision variable \(\x\) only),
but it also---and especially---results in a smooth penalty term for the subproblems that has always \emph{full} domain.
The emergence of this penalty-barrier envelope enables the adoption of generic (efficient) subsolvers, as well as tailored routines that exploit the problem's original structure.
This claim will be substantiated in \cref{sec:IP}, where we show that properties such as convexity and Lipschitz differentiability, whenever present, are preserved in the transformed problems.

	\section{Preliminaries}
		In this section we comment on useful notation and preliminary results before discussing optimality notions to characterize solutions of \eqref{eq:P}.

		\subsection{Notation and known facts}
			With \(\R\) and \(\Rinf \coloneqq \R \cup \set{\pm\infty}\) we denote the real and extended-real line, respectively, and with \(\R_+\coloneqq[0,\infty)\) and \(\R_-\coloneqq(-\infty,0]\) the set of nonnegative and nonpositive real numbers, respectively.
The positive and negative parts of a number \(r\in\R\) are respectively denoted as \([r]_+\coloneqq\max\set{0,r}\) and \([r]_-\coloneqq\max\set{0,-r}\), so that \(r=[r]_+-[r]_-\) and \(|r|=[r]_++[r]_-\).
We stick to the convention of bold-facing vector variables and vector-valued functions, and use \(\zeros\) to denote the zero vector of suitable size and similarly \(\ones\) for the vector with all entries equal to one.
When applying unary operators to a vector \(\vec r\), such as \(|\vec r|\) or \([\vec r]_+\), the operation is meant elementwise.

The notation \(\ffunc{T}{\R^n}{\R^m}\) indicates a set-valued mapping \(T\) that maps any \(\x\in\R^n\) to a (possibly empty) subset \(T(\x)\) of \(\R^m\).
Its \DEF{(effective) domain} and \DEF{graph} are the sets \(\dom T\coloneqq\set{\x\in\R^n}[T(\x)\neq\emptyset]\) and \(\graph T\coloneqq\set{(\x,\y)\in\R^n\times\R^m}[\y\in T(\x)]\).
	\(T\) is said to be \DEF{outer semicontinuous} (osc) if its graph is a closed subset of \(\R^n\times\R^m\).
Algebraic operations with or among set-valued mappings are meant in a componentwise sense; for instance, the sum of \(\ffunc{T_1,T_2}{\R^n}{\R^m}\) is defined as \((T_1+T_2)(\x)\coloneqq\set{\y^1+\y^2}[(\y^1,\y^2)\in T_1(\x)\times T_2(\x)]\) for all \(\x\in\R^n\).

The \DEF{distance} from a nonempty set \(E\subseteq\R^n\) \(\func{\dist_E}{\R^n}{[0,\infty)}\) is
\(
	\dist_E(\x)\coloneqq\inf_{\y\in E}\|\y-\x\|
\).
With \(\func{\indicator_E}{\R^n}{\Rinf}\) we denote the \DEF{indicator function} of \(E\), namely such that \(\indicator_E(\x)=0\) if \(\x\in E\) and \(\infty\) otherwise.
For an extended-real-valued function \(\func{h}{\R^n}{\Rinf}\), the \DEF{(effective) domain}, \DEF{graph}, and \DEF{epigraph} are given by \(\dom h \coloneqq \set{\x\in\R^n}[h(\x) < \infty]\), \(\graph h\coloneqq\set{(\x,h(\x))}[\x\in\dom h]\), and \(\epi h\coloneqq\set{(\x,\alpha)\in\R^n\times\R}[\alpha\geq h(\x)]\).
We say that \(h\) is \DEF{proper} if \(\dom h \neq \emptyset\) and \(h>-\infty\), and \DEF{lower semicontinuous} (lsc) if \(h(\xbar) \leq \liminf_{\x\to\xbar} h(\x)\) for all \(\xbar \in \R^n\) or, equivalently, if \(\epi h\) is a closed subset of \(\R^{n+1}\).
Following \cite[Def. 8.3]{rockafellar1998variational}, we denote by \(\ffunc{\hat\partial h}{\R^n}{\R^n}\) the \emph{regular subdifferential} of \(h\), where
\begin{equation*}
	\vec{v} \in \hat\partial h(\xbar)
\quad\defeq[\Leftrightarrow]\quad
	\liminf_{\limsubstack{\x&\to&\xbar\\ \x&\neq&\xbar}} \frac{h(\x) - h(\xbar) - \innprod{\vec{v}}{\x-\xbar}}{\|\x-\xbar\|} \geq 0 .
\end{equation*}
The (\DEF{limiting}, or \DEF{Mordukhovich}) \emph{subdifferential} of \(h\) is \(\ffunc{\partial h}{\R^n}{\R^n}\), where \(\bar{\vec{v}} \in \partial h(\xbar)\) if and only if \(\xbar\in\dom h\) and there exists a sequence \(\seq{\x^k,\vec{v}^k}\) in \(\graph\hat\partial h\) such that \((\x^k,\vec{v}^k,h(\x^k))\to(\xbar,\bar{\vec{v}},h(\xbar))\).
In particular, \(\hat\partial h(\x)\subseteq\partial h(\x)\) holds at any \(\x\in\R^n\); moreover, \(\vec{0}\in\hat\partial h(\x)\) is a necessary condition for local minimality of \(h\) at \(\x\) \cite[Thm. 10.1]{rockafellar1998variational}.
The subdifferential of \(h\) at \(\xbar\) satisfies \(\partial(h+h_0)(\xbar) = \partial h(\xbar) + \nabla h_0(\xbar)\) for any \(\func{h_0}{\R^n}{\Rinf}\) continuously differentiable around \(\xbar\) \cite[Ex. 8.8]{rockafellar1998variational}.
If \(h\) is convex, then \(\hat\partial h=\partial h\) coincide with the \DEF{convex subdifferential}
\[
	\R^n\ni\xbar
\mapsto
	\set{\vec{v}\in\R^n}[{h(\x) - h(\xbar) - \innprod{\vec{v}}{\x-\xbar}\geq0\ \forall\x\in\R^n}].
\]
For a convex set \(C\subseteq\R^m\) and a point \(\x\in C\) one has that \(\partial\indicator_C(\x)=\ncone_C(\x)\), where
\[
	\ncone_C(\x)
\coloneqq
	\set{\vec{v}\in\R^n}[\innprod{\vec{v}}{\x'-\x}\leq 0 \ \forall \x'\in C]
\]
denotes the \DEF{normal cone} of \(C\) at \(\x\), while \(\ncone_C(\x)=\emptyset\) for \(\x\notin C\).

We use the symbol \(\func{\jac\vec{F}}{\R^n}{\R^{m\times n}}\) to indicate the Jacobian of a differentiable mapping \(\func{\vec{F}}{\R^n}{\R^m}\), namely \(\jac\vec{F}(\xbar)_{i,j}=\dep{\vec{F}_i}{\x_j}(\xbar)\) for all \(\xbar\in\R^m\).
For a real-valued function \(h\), we instead use the gradient notation \(\nabla h\coloneqq\trans{\jac h}\) to indicate the column vector of its partial derivatives.
Finally, we remind that the \DEF{convex conjugate} of a proper lsc convex function \(\func{b}{\R}{\Rinf}\) is the proper lsc convex function \(\func{\conj b}{\R}{\Rinf}\) defined as
\(
	\conj b(\tau)\coloneqq\sup_{t\in\R}\set{\tau t-b(t)}
\),
and that one then has \(\tau\in\partial b(t)\) if and only if \(t\in\partial\conj b(\tau)\).

		\subsection{Stationarity concepts}\label{sec:stationarity}%
			This subsection summarizes well-known standard local optimality measures which were adopted in the proximal interior point framework of \cite{demarchi2024interior}, and which will be further developed in the following \cref{sec:Subproblems} into conditions tailored to the setting of this paper.
The interested reader is referred to \cite[\S2]{demarchi2024interior} for a verbose introduction and to \cite[\S3]{birgin2014practical} for a detailed treatise.
We start with the usual notion of (approximate) stationarity for general minimization problems of an extended-real-valued function.

\begin{definition}[stationarity]\label{defin:stationary}%
	Relative to the problem
	\(
		\minimize_{\x\in\R^n}\varphi(\x)
	\)
	for a function \(\func{\varphi}{\R^n}{\Rinf}\),
	a point \(\xbar\in\R^n\) is called
	\begin{enumerate}
	\item
		\DEF{stationary} if it satisfies \(\zeros\in\partial\varphi(\xbar)\);
	\item
		\DEF{\(\varepsilon\)-stationary} (with \(\varepsilon>0\)) if it satisfies \(\dist_{\partial\varphi(\xbar)}(\zeros)\leq\varepsilon\).
	\end{enumerate}
\end{definition}

A standard optimality notion that reflects the constrained structure of \eqref{eq:P} is given by the Karush-Kuhn-Tucker (KKT) conditions.

\begin{definition}[\protect\KKT* optimality]\label{defin:KKT}%
	Relative to problem \eqref{eq:P}, we say that \(\xbar\in\R^n\) is \DEF{KKT-optimal} if there exist \(\ybar\in\R^m\) and \(\yeqbar\in\R^\meq\) such that
	\[\tag{KKT}\label{KKT}
		\begin{cases}
			-\trans{\jac \c(\xbar)}\ybar
			-\trans{\jac \ceq(\xbar)}\yeqbar
			\in
			\partial\cost(\xbar)
		\\
			\c(\xbar)\leq\zeros
			~\text{and}~
			\ceq(\xbar)=\zeros
		\\
			\ybar\geq\zeros
		\\
			\ybar_i\c_i(\xbar)=0\quad i=1,\dots,m.
		\end{cases}
	\]
	In such case, we say that \((\xbar,\ybar,\yeqbar)\in\R^n\times\R^m\times\R^\meq\) is a \DEF{KKT-optimal triplet} for \eqref{eq:P}.
\end{definition}

Even for convex problems, unless suitable constraint and epigraphical qualifications are met, local minimizers may fail to be \KKT-optimal.
Necessary conditions in the generality of problem \eqref{eq:P} are provided by the following asymptotic counterpart.\footnote{%
	This definition is inspired by \cite[Def. 3.1]{birgin2014practical}, where the `A' in \AKKT* is short for `approximate'.
	We however find `asymptotic' more fit to emphasize its dependency on sequences, and reserve the `approximate' label to characterize points satisfying \KKT* optimality up to some tolerance as in \cref{defin:eKKT}.
}

\begin{definition}[\protect\AKKT* optimality]\label{defin:AKKT}%
	Relative to problem \eqref{eq:P}, we say that \(\xbar\in\R^n\) is \DEF{asymptotically KKT-optimal} if \(\xbar\in\dom\cost\) and there exist sequences \(\seq{\x^k}\to\xbar\), \(\seq{\y^k}\subset\R^m\) and \(\seq{\yeq^k}\subset\R^\meq\) such that
	\[\tag{A-KKT}\label{AKKT}
		\renewcommand{\arraystretch}{1.2}
		\begin{cases}
			\dist_{\partial q(\x^k)}\bigl(
				-\trans{\jac \c(\x^k)}\y^k
				-\trans{\jac \ceq(\x^k)}\yeq^k
			\bigr)
			\to
			0
		\\{}
			[\c(\x^k)]_+\to \zeros
			~\text{and}~
			\ceq(\x^k)\to \zeros
		\\
			\y^k\geq \zeros
		\\
			\y_i^k\c_i(\xbar)=0\quad i=1,\dots,m.
		\end{cases}
	\]
\end{definition}

The requirement \(\xbar\in\dom\cost\), while superfluous in the original \cite[Def. 3.1]{birgin2014practical},
is a necessary technicality to cope with possible nonclosedness of \(\dom\cost\) in the generality of \cref{ass:basic}.
Taking the unconstrained minimization of \(\cost(x)=\frac{1}{|x|}+\sin\frac{1}{x}\) as an example, this requirements prevents \(\bar x=0\notin\dom\cost\) to be considered A-KKT-optimal despite the fact that \(x^k=\frac{1}{(2k+1)\pi}\to\bar x\) constitutes a valid sequence in the definition (having \(\dist_{\partial\cost(x^k)}(0)=0\) for all \(k\)).

\begin{proposition}[{\cite[Thm. 3.1]{birgin2014practical}, \cite[Prop. 2.5]{demarchi2023constrained}}]
	Any local minimizer for \eqref{eq:P} is \AKKT-optimal.
\end{proposition}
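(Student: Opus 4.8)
The plan is to establish the sequential optimality condition by the classical external-penalty route for \eqref{eq:P}, combined with a Tikhonov regularization and a localization ball, adapted to the extended-real-valued cost \cite{birgin2014practical}. Let \(\xbar\) be a local minimizer and fix \(\delta>0\) such that \(\cost(\xbar)\leq\cost(\x)\) for every feasible \(\x\) with \(\|\x-\xbar\|\leq\delta\); note \(\xbar\in\dom\cost\) as a local minimizer attains a finite value. For a sequence \(\rho_k\to\infty\), I would set
\[
	\Phi_k(\x) \coloneqq \cost(\x) + \frac{\rho_k}{2}\bigl\|[\c(\x)]_+\bigr\|^2 + \tfrac12\|\x-\xbar\|^2
\]
and let \(\x^k\) be a minimizer of \(\Phi_k\) over the compact ball \(B\coloneqq\set{\x}[\|\x-\xbar\|\leq\delta]\). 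Such a minimizer exists: \(\Phi_k\) is lsc (the lsc \(\cost\) plus two continuous terms, the penalty being \(C^1\) since \(s\mapsto\tfrac12[s]_+^2\) is continuously differentiable and \(\c\) is \(C^1\)) and proper on \(B\) (finite at \(\xbar\)), hence attains its infimum on \(B\) by Weierstrass.

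Next I would prove \(\x^k\to\xbar\). Since \(\c(\xbar)\leq\zeros\) gives \([\c(\xbar)]_+=\zeros\), minimality yields \(\Phi_k(\x^k)\leq\Phi_k(\xbar)=\cost(\xbar)\) for all \(k\). As \(\cost\) is bounded below on the compact \(B\), the term \(\tfrac{\rho_k}{2}\|[\c(\x^k)]_+\|^2\) stays bounded, so \([\c(\x^k)]_+\to\zeros\); thus any limit point \(\x^\star\) of \(\seq{\x^k}\subset B\) is feasible by continuity of \(\c\). Passing to a convergent subsequence \(\x^{k_j}\to\x^\star\) and discarding the nonnegative penalty term gives \(\cost(\x^{k_j})+\tfrac12\|\x^{k_j}-\xbar\|^2\leq\cost(\xbar)\); taking \(\liminf\) and invoking lower semicontinuity of \(\cost\) yields \(\cost(\x^\star)+\tfrac12\|\x^\star-\xbar\|^2\leq\cost(\xbar)\). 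But \(\x^\star\) is feasible and lies in \(B\), so local optimality gives \(\cost(\xbar)\leq\cost(\x^\star)\), forcing \(\x^\star=\xbar\). Since every subsequential limit equals \(\xbar\) and \(\seq{\x^k}\) is confined to a compact set, the whole sequence converges to \(\xbar\), and the second \AKKT* requirement \([\c(\x^k)]_+\to\zeros\) is already in hand.

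Finally I would extract the multiplier and the stationarity residual. Because \(\x^k\to\xbar\), for all large \(k\) the iterate lies in the interior of \(B\) and is therefore a local minimizer of the unconstrained \(\Phi_k\); Fermat's rule (\(\zeros\in\hat\partial\Phi_k(\x^k)\subseteq\partial\Phi_k(\x^k)\)) together with the sum rule for a \(C^1\) perturbation, both recalled in the preliminaries, give
\[
	\zeros \in \partial\Phi_k(\x^k) = \partial\cost(\x^k) + \rho_k\trans{\jac\c(\x^k)}[\c(\x^k)]_+ + (\x^k-\xbar) .
\]
Setting \(\y^k\coloneqq\rho_k[\c(\x^k)]_+\geq\zeros\), this reads \(-\trans{\jac\c(\x^k)}\y^k-(\x^k-\xbar)\in\partial\cost(\x^k)\), whence \(\dist_{\partial\cost(\x^k)}(-\trans{\jac\c(\x^k)}\y^k)\leq\|\x^k-\xbar\|\to0\), the first \AKKT* condition; the sign condition \(\y^k\geq\zeros\) is built in. For the complementarity \(\y_i^k\c_i(\xbar)=0\): if \(\c_i(\xbar)=0\) it is immediate, while if \(\c_i(\xbar)<0\) then continuity gives \(\c_i(\x^k)<0\) and hence \(\y_i^k=\rho_k[\c_i(\x^k)]_+=0\) for all large \(k\); passing to the tail of the sequence closes the argument.

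The main delicacy is variational rather than computational: one must push the penalty limit through the merely lsc, possibly extended-real-valued cost \(\cost\), which is precisely what the Weierstrass existence step and the lower-semicontinuity inequality in the convergence argument handle. The Tikhonov term \(\tfrac12\|\x-\xbar\|^2\) is indispensable—without it the penalty minimizers could converge to a different feasible point of \(B\)—and the localization to \(B\) is what lets the \emph{local} (rather than global) optimality of \(\xbar\) enter the estimate \(\cost(\xbar)\leq\cost(\x^\star)\).
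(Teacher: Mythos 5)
Your proposal is correct and follows essentially the same route as the paper's source for this statement: the paper gives no proof of its own, deferring to \cite[Thm. 3.1]{birgin2014practical} and \cite[Prop. 2.5]{demarchi2023constrained}, and those proofs are precisely your scheme --- minimize the quadratic external penalty plus the Tikhonov term \(\tfrac12\|\x-\xbar\|^2\) over a localization ball, show the penalized minimizers converge to \(\xbar\) via lower semicontinuity, then read off the multipliers \(\y^k=\rho_k[\c(\x^k)]_+\) from Fermat's rule and the sum rule for the limiting subdifferential. Your handling of the nonsmooth, extended-real-valued cost (Weierstrass on the compact ball, the \(\liminf\) inequality, and passing to the tail for the complementarity condition \(\y_i^k\c_i(\xbar)=0\)) matches the adaptation in \cite{demarchi2023constrained} and contains no gaps.
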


For the sake of designing suitable algorithmic stopping criteria, we also define an approximate variant which provides a further weaker notion of optimality.

\begin{definition}[\eKKT* optimality]\label{defin:eKKT}%
	Relative to problem \eqref{eq:P}, for \(\vec\epsilon=(\epsilon_{\rm p},\epsilon_{\rm d})>(0,0)\) we say that \(\xbar\) is an \DEF{(approximate) \(\vec\epsilon\)-KKT point} if there exist \(\ybar\in\R^m\) and \(\yeqbar\in\R^\meq\) such that
	\[\tag{\eKKT*}\label{eKKT}
		\renewcommand{\arraystretch}{1.2}
		\begin{cases}
			\dist_{\partial q(\xbar)}\bigl(
				-\trans{\jac \c(\xbar)}\ybar
				-\trans{\jac \ceq(\xbar)}\yeqbar
				\bigr)\leq\epsilon_{\rm d}
		\\
			\|[\c(\xbar)]_+\|_\infty\leq\epsilon_{\rm p}
			~\text{and}~
			\|\ceq(\xbar)\|_\infty\leq\epsilon_{\rm p}
		\\
			\ybar\geq \zeros
		\\
			\min\set{\ybar_i,[\c_i(\xbar)]_-}\leq\epsilon_{\rm p}\quad i=1,\dots,m.
		\end{cases}
	\]
\end{definition}

It is handy to name points satisfying (approximate) feasibility as in \cref{defin:eKKT} and \cref{defin:KKT} in order to soften symbolic clutter in the sequel.

\begin{definition}[\(\epsilon\)-feasibility]
	Given \(\epsilon\geq0\), a point \(\xbar\in\R^n\) is said to be \DEF{\(\epsilon\)-feasible} if \(\|[\c(\xbar)]_+\|_\infty\leq\epsilon\) and \(\|\ceq(\xbar)\|_\infty\leq\epsilon\).
	When \(\epsilon=0\), we simply say that \(\xbar\) is \DEF{feasible}.
\end{definition}

As discussed in the commentary after \cite[Thm. 3.1]{birgin2014practical}, \AKKT{} optimality of \(\xbar\in\dom\cost\) is tantamount to the existence of a sequence \(\x^k\to\xbar\) of \eKKT[\vec{\epsilon}^k] points for some \(\vec{\epsilon}^k\to(0,0)\).
More generally, any \KKT{} point is both \AKKT{} and \eKKT{} for any \(\vec{\epsilon}\geq(0,0)\).
We conclude by listing the observations in \cite[Lem. 8 and Rem. 9]{demarchi2024interior} that will be useful in the sequel.

\begin{remark}\label{thm:yk}%
	Relative to the conditions \AKKT{} in \cref{defin:AKKT}:
	\begin{enumerate}
	\item \label{thm:ykck}%
		Up to possibly perturbing the sequence of multipliers, the complementarity slackness \(\y_i^k\c_i(\xbar)=0\) can be equivalently expressed as \(\y_i^k\c_i(\x^k)\to0\).
	\item \label{thm:ykbounded}%
		Suppose that \(\partial\cost\) is osc on \(\dom\cost\) (as is the case when \(\cost\) is continuous relative to its domain).
		If the sequence \(\seq{\y^k,\yeq^k}\) contains a bounded subsequence, then \(\xbar\) is a \KKT-optimal point, not merely asymptotically.
	\qedhere
	\end{enumerate}
\end{remark}

We note that \cite{demarchi2024interior} imposes a standing assumption that \(\cost\) be continuous at every point \(\xbar\in\dom\cost\).
This is used to ensure that \(\vec{v}[\bar]\in\partial\cost(\xbar)\) whenever \((\x^k,\vec{v}^k)\to(\xbar,\vec{v}[\bar])\) with \((\x^k,\vec{v}^k)\in\graph\partial\cost\), that is, that the condition \(\cost(\x^k)\to\cost(\xbar)\) is superfluous in the definition of limiting subdifferential.
In \cref{thm:ykbounded} we have relaxed this requirement by directly assuming this limiting property, that is, that \(\partial\cost\) is osc on \(\dom\cost\).
Many functions of practical interest that are not continuous, such as the \(L^0\)-norm, still comply with this requirement.

	\section{Subproblems generation}\label{sec:Subproblems}%
		In this section we operate a two-step modification of problem \eqref{eq:P}, whose conceptual roadmap is as follows.
We begin with a relaxed reformulation \eqref{eq:Pa} in which violation of the constraints \(\c(\x)\leq\zeros\) and \(\ceq(\x)=\zeros\) is penalized with an \(L^1\)-norm in the cost function.
An equivalent reformulation \eqref{eq:Qa} with slack variables \(\s\in\R^m\) and \(\sseq\in\R^\meq\) simplifies this formulation by promoting separability.
Next, a new problem \eqref{eq:Qam} is created by adding a barrier term to enforce strict satisfaction of the inequality constraints in the \(L^1\)-penalized reformulation \eqref{eq:Qa}.
The pointwise minimization with respect to the slack variables \(\s\) and \(\sseq\) can be carried out explicitly, with negligible computational overhead, resulting in a new problem \eqref{eq:Pam} in which the original constraints \(\c(\x)\leq\zeros\) and \(\ceq(\x)=\zeros\) are softened with a smooth penalty.
Increasing the \(L^1\)-penalty and decreasing the barrier coefficients gives rise to a homotopic transition between smooth reformulations \eqref{eq:Pam} and the original nonsmooth problem \eqref{eq:P}.

Compared to envelope-type smoothings such as in \cite{stella2017forward} that preserve one-to-one correspondence of minimizers for any parameter, the smoothened subproblems here are equivalent to the original \eqref{eq:P} only in the limit.
In this sense, our method is more closely related to the approach in \cite{simoes2021lasry}, but without the practical restrictions.
Unlike the latter, which is tailored to problems where nonsmooth terms admit an easily computable `double envelope', our technique applies more broadly with minimal limitations.

We adopt the convention of using the label `P' in problems \eqref{eq:Pa} and \eqref{eq:Pam} that share the minimization variable \(\x\) with that in the original problem \eqref{eq:P}.
Label `Q' is instead used in problems \eqref{eq:Qa} and \eqref{eq:Qam} which come with additional slack variables \(\s\) and \(\sseq\).
Each `P-problem' amounts to the corresponding `Q-problem' after marginal minimization with respect to the slack variables.

		\subsection{\texorpdfstring{\(\bm{L^1}\)}{L1}-penalization}\label{sec:L1}%
			Given \(\alpha>0\), we consider the following \(L^1\) relaxation of \eqref{eq:P}:
\[
	\tag{\({\rm P}\!_\alpha\)}\label{eq:Pa}
	\minimize_{\x\in\R^n}~
		\cost(\x)
		+
		\alpha\|[\c(\x)]_+\|_1
		+
		\alpha\|\ceq(\x)\|_1.
\]
By introducing slack variables \(\s\in\R^m\) and \(\sseq\in\R^\meq\), \eqref{eq:Pa} can equivalently be cast as
\[
	\tag{\({\rm Q}_\alpha\)}\label{eq:Qa}
	\begin{array}[t]{>{\displaystyle}r @{\ } >{\displaystyle}l}
		\minimize_{\substack{
				\x\in\R^n,\s\in\R^m\\
				\sseq\in\R^\meq
		}}~
		&
		\cost(\x)
		+
		\alpha\innprod{\ones}{\s}
		+
		\indicator_{\R_+^m}(\s)
		+
		\alpha\innprod{\ones}{\sseq}
	\\
		\stt{}~& \c(\x) \leq \s
		~~\text{and}~~
		{-\sseq}\leq\ceq(\x)\leq\sseq
		,
	\end{array}
\]
as one can easily verify that
\begin{align*}
	[\c(\x)]_+
={} &
	\argmin_{\s\in\R^m}\set{\alpha\innprod{\ones}{\s}+\indicator_{\R_+^m}(\s)}[\c(\x) \leq \s]
\shortintertext{and}
	|\ceq(\x)|
={} &
	\argmin_{\sseq\in\R^\meq}\set{\alpha\innprod{\ones}{\sseq}}[{-\sseq}\leq\ceq(\x) \leq \sseq]
\end{align*}
hold for any \(\x\in\R^n\) and \(\alpha>0\).
In other words, \eqref{eq:Pa} amounts to \eqref{eq:Qa} after a marginal minimization with respect to the slack variables \(\s\) and \(\sseq\).
The KKT conditions associated to \eqref{eq:Qa} are of particular interest to us.
As it can be deduced from the following lemma, they correspond to the stationarity condition \(\zeros\in\partial\cost+\alpha\partial\bigl[\|[\c({}\cdot{})]_+\|_1\bigr]+\alpha\partial\bigl[\|[\ceq({}\cdot{})\|_1\bigr]\) for \eqref{eq:Pa}.
The result is textbook, but its proof is nevertheless detailed in \cref{proof:thm:KKTa} for completeness.

\begin{lemma}\label{thm:KKTa}%
	Let \cref{ass:basic} hold.
	Then, a point \((\x,\s,\sseq)\in\R^n\times\R^m\times\R^{\meq}\) is KKT-optimal for \eqref{eq:Qa} if and only if \(\s=[\c(\x)]_+\), \(\sseq=|\ceq(\x)|\), and there exist \(\y\in\R^m\) and \(\yeq\in\R^{\meq}\) such that%
	\[\tag{KKT\(_\alpha\)}\label{KKTa}
		\begin{cases}
			-\trans{\jac\c(\x)}\y-\trans{\jac\ceq(\x)}\yeq
			\in
			\partial\cost(\x) &
		\\
			\zeros\leq\y\leq\alpha\ones
		\\
			|\yeq|\leq\alpha\ones
		\\
			 \y_i[\c_i(\x)]_-=0=(\alpha-\y_i)[\c_i(\x)]_+ ,~~~i=1,\dots,m%
		\\
			 (\alpha-\yeq_j)[\ceq_j(\x)]_+=0=(\alpha+\yeq_j)[\ceq_j(\x)]_- ,~~~j=1,\dots,\meq.%
		\end{cases}
	\]
\end{lemma}
\begin{proof}
	See \cref{proof:thm:KKTa}.
\end{proof}

\Cref{thm:KKTa} suggests the following relaxed optimality notion for problem \eqref{eq:P}, which in light of the connection with KKT-optimality for \eqref{eq:Qa} we shall refer to as \KKTa-optimality.

\begin{definition}[\protect\KKTa* optimality]%
	Given \(\alpha>0\), we say that a point \(\xbar^\alpha\in\R^n\) is \DEF{\KKTa*-optimal} for \eqref{eq:P} if there exist \(\ybar^\alpha\in\R^m\) and \(\yeqbar^\alpha\in\R^{\meq}\) such that \((\xbar^\alpha,\ybar^\alpha,\yeqbar^\alpha)\) satisfy \eqref{KKTa}, and call \((\xbar^\alpha,\ybar^\alpha,\yeqbar^\alpha)\in\R^n\times\R^m\times\R^\meq\) a \DEF{\KKTa*-optimal triplet} for \eqref{eq:P}.
\end{definition}

Similarly to what done in \eqref{eKKT} with respect to \eqref{KKT}, we may introduce an approximate \KKTa-optimality condition in which stationarity and complementarity slackness are satisfied up to some tolerance parameters.
When said tolerance is zero, the nonapproximate \KKTa{} notion is recovered.

\begin{definition}[\protect\eKKTa* optimality]%
	Given \(\alpha>0\) and \(\vec{\epsilon}=(\epsilon_{\rm p},\epsilon_{\rm d})\geq(0,0)\), we say that a point \(\xbar^\alpha\in\R^n\) is \DEF{\eKKTa*-optimal} for \eqref{eq:P} if there exist \(\ybar^\alpha\in\R^m\) and \(\yeqbar^\alpha\in\R^\meq\)
	such that
	\[\tag{\(\vec{\epsilon}\)-KKT\(_\alpha\)}\label{eKKTa}
		\renewcommand{\arraystretch}{1.2}
		\begin{cases}
			\dist_{\partial\cost(\xbar^\alpha)}\bigl(
				-\trans{\jac \c(\xbar^\alpha)}\ybar^\alpha
				-\trans{\jac \ceq(\xbar^\alpha)}\yeqbar^\alpha
			\bigr)
			\leq
			\epsilon_{\rm d}
		\\
			\zeros\leq\ybar^\alpha\leq\alpha\ones
		\\
			-\alpha\ones\leq\yeqbar^\alpha\leq\alpha\ones
		\\
			s(\xbar^\alpha,\ybar^\alpha,\yeqbar^\alpha)\leq\epsilon_{\rm p},
		\end{cases}
	\]
	where
	\begin{equation}\label{eq:sk}
		s(\xbar^\alpha,\ybar^\alpha,\yeqbar^\alpha)
	\coloneqq
		\left\|
			\min\set{
				\begin{pmatrix}
					\ybar^\alpha
				\\
					\alpha\ones-\ybar^\alpha
				\\
					\alpha\ones+\yeqbar^\alpha
				\\
					\alpha\ones-\yeqbar^\alpha
				\end{pmatrix}
			,~
				\begin{pmatrix}
					[\c(\xbar^\alpha)]_-
				\\
					[\c(\xbar^\alpha)]_+
				\\
					[\ceq(\xbar^\alpha)]_-
				\\
					[\ceq(\xbar^\alpha)]_+
				\end{pmatrix}
			}
		\right\|_\infty,
	\end{equation}
	and we say that \((\xbar^\alpha,\ybar^\alpha,\yeqbar^\alpha)\in\R^n\times\R^m\times\R^\meq\) is an \DEF{\eKKTa*-optimal triplet} for \eqref{eq:P}.
\end{definition}

As a next step, we clarify how \eKKT- and \eKKTa-optimality for problem \eqref{eq:P} are interrelated.

\begin{lemma}\label{thm:KKTaKKT}%
	For any \(\vec{\epsilon}=(\epsilon_{\rm p},\epsilon_{\rm d})\geq(0,0)\) the following hold:
	\begin{enumerate}
	\item %
		An \eKKTa-optimal triplet \((\xbar^\alpha,\ybar^\alpha,\yeqbar^\alpha)\) with \(\xbar^\alpha\) \(\epsilon_{\rm p}\)-feasible is also \eKKT-optimal.
	\item \label{thm:KKT=>KKTa}%
		An \eKKT-optimal triplet \((\xbar,\ybar,\yeqbar)\) is also \eKKTa-optimal for any \(\alpha\geq\max\set{\|\ybar\|_\infty,\|\yeqbar\|_\infty}\).
	\end{enumerate}
\end{lemma}

Once again the result is standard, and the proof is obvious by comparing the \eKKT{} and \eKKTa{} optimality conditions, as schematically summarized below:
\[
	\renewcommand{\arraystretch}{1.2}
	\hspace*{-0.5em}
	\rotatebox[origin=c]{90}{\eKKT}
	\begin{cases}
		\dist_{\partial q(\xbar)}\bigl(
			-\trans{\jac \c(\xbar)}\ybar
			-\trans{\jac \ceq(\xbar)}\yeqbar
		\bigr)
		\leq
		\epsilon_{\rm d}
	\\
		\ybar\geq\zeros
	\\
		\|\ceq(\xbar)\|_\infty\leq\epsilon_{\rm p}
	\\
		\|[\c(\xbar)]_+\|_\infty\leq\epsilon_{\rm p}
	\\
		\min\set{\ybar_i,[\c_i(\xbar)]_-}\leq\epsilon_{\rm p}
	\end{cases}
\quad
	\rotatebox[origin=c]{90}{\eKKTa}
	\begin{cases}
		\dist_{\partial\cost(\xbar)}\bigl(
			-\trans{\jac \c(\xbar)}\ybar
			-\trans{\jac \ceq(\xbar)}\yeqbar
		\bigr)
		\leq
		\epsilon_{\rm d}
	\\
		\zeros\leq\ybar\leq\alpha\ones,~
		-\alpha\ones\leq\yeqbar\leq\alpha\ones
	\\
		\min\set{\alpha-\yeqbar_j\sign(\ceq_j(\xbar)),|\ceq_j(\xbar)|}\leq\epsilon_{\rm p}
	\\
		\min\set{\alpha-\ybar_i,[\c_i(\xbar)]_+}\leq\epsilon_{\rm p}
	\\
		\min\set{\ybar_i,[\c_i(\xbar)]_-}\leq\epsilon_{\rm p}
	\end{cases}
\]
with \(i=1,\dots,m\) and \(j=1,\dots,\meq\).

		\subsection{IP-type barrier reformulation}\label{sec:IP}%
			To carry on with the second modification of the problem, in what follows we fix a barrier \(\b\) satisfying the following requirements.

\begin{mybox}
	\begin{assumption}\label{ass:b}%
		The barrier function \(\func{\b}{\R}{\Rinf }\) is proper, lsc, and twice continuously differentiable on its domain \(\dom\b=(-\infty,0)\) with \(\b'>0\) and \(\b''>0\).
	\end{assumption}
\end{mybox}

For reasons that will be elaborated on later, convenient choices of barriers are \(\b(t)=-\frac{1}{t}\), \(\b(t)=\ln(1-\frac1t)\), and the classical logarithmic barrier \(\b(t)=-\ln(-t)\) (all extended as \(\infty\) on \(\R_+\)), see \cref{table:kappa} in \cref{sec:barrierProp}.
Once such \(\b\) is fixed, in the spirit of interior point methods we enforce strict satisfaction of the constraint in \eqref{eq:Qa} by considering the following barrier version
\begin{align*}
	\minimize_{\substack{
			\x\in\R^n,\,\s\in\R^m\\
			\sseq\in\R^\meq
	}}~
	\cost(\x)
&
	+
	\alpha\innprod*{\ones}{\s}
	+\indicator_{\R_+^m}(\s)
	+
	\mu\sum_{i=1}^m\b\bigl(\c_i(\x)-\s_i\bigr)
\\
&
	+
	\alpha\innprod*{\ones}{\sseq}
	+
	\mu\sum_{j=1}^\meq\left[
		\b\bigl(\ceq_j(\x)-\sseq_j\bigr)
		+
		\b\bigl(-\ceq_j(\x)-\sseq_j\bigr)
	\right]
\tag{\({\rm Q}_{\alpha,\mu}\)}\label{eq:Qam}
\end{align*}
for some given parameter \(\mu>0\).
Differently from the IP frameworks of \cite{chouzenoux2020proximal,demarchi2024interior}, we here enforce a barrier in the relaxed version \eqref{eq:Qa}, and \emph{not} on the original problem \eqref{eq:P}.
As such, it is only triplets \((\x,\s,\sseq)\) that need to lie in the interior of the constraints, but \(\x\) is otherwise `unconstrained': for any \(\x\in\R^n\), any \(\s>\c(\x)\) and \(\sseq >|\ceq(\x)|\) (elementwise) yield a triplet \((\x,\s,\sseq)\) that satisfies the strict constraints \(\c(\x)-\s<\zeros\) and \(-\sseq<\ceq(\x)<\sseq\).
Furthermore, notice that the positivity constraint $\s\geq \zeros$ remains untouched, formally imposed by an indicator and not by the barrier. %
In fact, observing that the cost in \eqref{eq:Qam} is separable, we may explicitly minimize with respect to the slack variables \(\s\) and \(\sseq\).
Plugging their optimal values into \eqref{eq:Qam} results in an unconstrained reformulation of the form
\[\tag{\({\rm P}_{\!\alpha,\mu}\)}\label{eq:Pam}
	\minimize_{\x\in\R^n}~
	\cost(\x)
	+
	\mu\Psi\bigl(\c(\x)\bigr)+\mu\Psi^{\rm eq}\bigl(\ceq(\x)\bigr),
\]
where, for any \(\rho*>0\),\footnote{%
	The choice of the starred symbol \(\rho*\) stems from the fact that, as shown in \cref{thm:psi} (see also \cref{fig:b_rho}), this quantity represents a `slope' of \(\b\), that is, a value of its derivative, and we thus treat it as a `dual' object.
}
\begin{equation}\label{eq:Psis_separable}
	\Psi_{\rho*}(\y)
\coloneqq
	\sum_{i=1}^m\psi_{\rho*}(\y_i)
\qquad\text{and}\qquad
	\Psi_{\rho*}^{\rm eq}(\yeq)
\coloneqq
	\sum_{j=1}^\meq\psi_{\rho*}^{\rm eq}(\yeq_j)
\end{equation}
are separable functions with
\begin{align}\label{eq:psidef}%
	\psi_{\rho*}(t)
\coloneqq{} &
	\min_{z\in\R_+}\set{\rho*z+\b(t-z)}
\shortintertext{and}
\label{eq:psieqdef}
	\psi_{\rho*}^{\rm eq}(t)
\coloneqq{} &
	\min_{z\in\R}\set{\rho*z+\b(t-z)+\b(-t-z)}.
\end{align}
These functions satisfy appealing properties summarized in the following theorems.

\begin{theorem}\label{thm:psi}%
	Suppose that \cref{ass:b} holds.
	Then, for any \(\rho*>0\) one has that
	\begin{gather}
	\label{eq:psi}
		\psi_{\rho*}(t)
	=
		\begin{ifcases}
			\b(t) & \b'(t)\leq \rho* \\
			\rho*t-\b*(\rho*) \otherwise
		\end{ifcases}
	\intertext{%
		is convex, Lipschitz differentiable, and \(\rho*\)-Lipschitz continuous with derivative
	}
	\label{eq:psi'}
		\psi_{\rho*}'(t)
	=
		\min\set{\b'(t),\rho*}.
	\end{gather}
	Moreover, for any \(\func{c}{\R^n}{\R}\) convex, the composition \(\psi_{\rho*}\circ c\) is also convex.
\end{theorem}
\begin{proof}
	See \cref{proof:thm:psi}.
\end{proof}

As is apparent from \eqref{eq:psi}, \(\psi_{\rho*}\) coincides with the barrier \(\b\) up to when its slope is \(\rho*\), and after that point it reduces to its tangent line.
As such, \(\psi_{\rho*}\) coincides with a McShane Lipschitz (and globally Lipschitz differentiable) extension \cite{mcshane1934extension} of a portion of the barrier \(\b\), as depicted in \cref{fig:barriers_inequality} and \ref{fig:b_rho}.
	This feature is also evident by viewing \(\psi_{\rho*}\) as the \emph{\(\rho*\)-Pasch-Hausdorff envelope} of \(\b\), as detailed in the proof.

Similar properties are true for \(\psi_{\rho*}^{\rm eq}\), though a corresponding closed-form expression for generic barriers \(\b\) is more cumbersome and not particularly helpful.
An analytic expression is nevertheless available for specific choices of barriers \(\b\), see \cref{table:psi}, or their value at any point can more generally be retrieved at negligible cost by solving a one-dimensional smooth monotone equation.

\begin{theorem}\label{thm:psieq}%
	Suppose that \cref{ass:b} holds.
	Then, for any \(\rho*>0\) one has that
	\begin{gather}
	\label{eq:psieq}
		\psi_{\rho*}^{\rm eq}(t)
	=
		\rho*z_{\rho*}(t)+\b\bigl(t-z_{\rho*}(t)\bigr)+\b\bigl(-t-z_{\rho*}(t)\bigr)
	\intertext{%
		is convex, Lipschitz differentiable, and \(\rho*\)-Lipschitz continuous with derivative
	}
	\label{eq:psieq'}
		(\psi_{\rho*}^{\rm eq})'(t)
	=
		\rho*-2\b'\bigl(-t-z_{\rho*}(t)\bigr)
	\in
		(-\rho*,\rho*),
	\intertext{%
		where, denoting \(\rho\coloneqq\b*'(\rho*)<0\), \(z_{\rho*}(t)>|t|-\rho\) is the unique solution \(z\in\R\) to the smooth monotone equation
	}
	\label{eq:psieq:s*}
		\b'(t-z)
		+
		\b'(-t-z)
	=
		\rho*.
	\end{gather}
	Moreover, for any \(t\neq0\) one has that \(\bigl|(\psi_{\rho*}^{\rm eq})'(t)\bigr|\geq\rho*-2\b'(-|t|)\).
\end{theorem}
\begin{proof}
	See \cref{proof:thm:psieq}.
\end{proof}

	The specific barriers included in \cref{table:psi} are visualized for comparison in \cref{fig:barriers}, along with their corresponding envelopes \(\psi_{\rho^\ast}\) and \(\psi_{\rho^\ast}^{\rm eq}\).
	Notably, the log-like barrier offers an intermediate between the inverse and the logarithmic barriers, bringing together the positive valuedness of the former with the behavior of the latter near $t=0$.
	On the one hand, positive valuedness guarantees via \cref{thm:psi,thm:psieq} that this property is inherited by $\psi_{\rho*}$ and $\psi_{\rho*}^{\rm eq}$, resulting in \eqref{eq:Pam} being more likely well posed.
	On the other hand, we will see below in \cref{sec:barrierProp} that the logarithmic barrier behavior is optimal near \(t=0\), in a certain sense.
	For these reasons, the log-like barrier function is a practical substitute for the classical logarithmic barrier and will be our default choice in the numerical validations,
	which support these claims.

\begin{table}
	\[
	\setlength{\arraycolsep}{10pt}
		\begin{array}{|ccc|}
			\multicolumn{1}{c}{\b(t) \text{ (for \(t<0\))}} & \b*(\tau) \text{ (for \(\tau\geq0\))}& \multicolumn{1}{c}{\s_{\rho*}(t) \text{ (for \(t\in\R\))}}
		\\\hline
			\vphantom{\Bigg|}
			-\frac{1}{t}
			& -2\sqrt{\tau}
			& \sqrt{
				t^2
				+ \frac{1}{\rho*}
				+ \sqrt{\frac{4}{\rho*} t^2 + \frac{1}{(\rho*)^2}}
			}
		\\[2ex]
			\ln\left(1-\frac{1}{t}\right)
			& -2\left(\frac{\sqrt{\tau}}{\sqrt{\tau}+\sqrt{\tau+4}}+\ln\bigl(\frac{\sqrt{\tau}+\sqrt{\tau+4}}{2}\bigr)\right)
			& \sqrt{
				t^2
				+ \frac{1}{4}
				+ \frac{1}{\rho*}
				+ \sqrt{t^2 + \frac{1}{(\rho*)^2} + \frac{4t^2}{\rho*} }
			} - \frac{1}{2}
		\\[2ex]
			-\ln\left(-t\right)
			& -1-\ln(\tau)
			& \frac{1}{\rho*} + \sqrt{ t^2 + \frac{1}{(\rho*)^2} }
		\\[1ex]\hline
		\end{array}
	\]
	\caption[Examples of barriers with their conjugates]{%
		Examples of barriers with their conjugates and analytic expressions for \(\s_{\rho*}(t)\), needed to compute the equality penalty
		\(
			\psi_{\rho*}^{\rm eq}(t)
		\)
		and its derivative
		\(
			(\psi_{\rho*}^{\rm eq})'(t)
		\)
		as in \eqref{eq:psieq} and \eqref{eq:psieq'}.
	}%
	\label{table:psi}%
\end{table}

\begin{figure}[tbh]
	\begin{subfigure}[t]{0.49\linewidth}
		\centering
		\includetikz[width=0.8\linewidth]{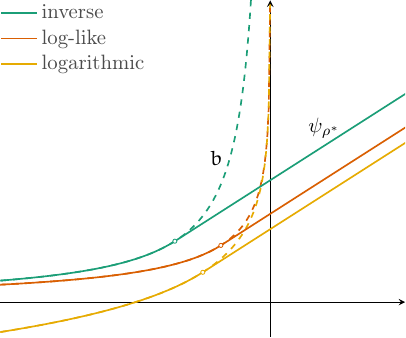}
		\caption[]{%
			Function \(\psi_{\rho*}\) (solid lines) penalizes the violation of the inequality constraint $t\leq 0$.
		}%
		\label{fig:barriers_inequality}%
	\end{subfigure}
	\hfill
	\begin{subfigure}[t]{0.49\linewidth}
		\centering
		\includetikz[width=0.8\linewidth]{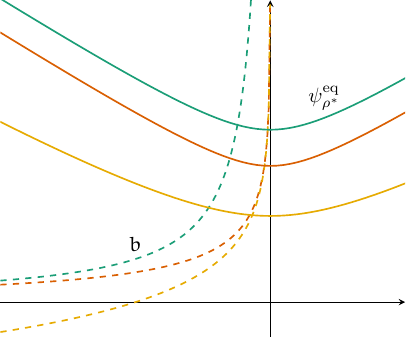}
		\caption[]{%
			Function \(\psi_{\rho*}^{\rm eq}\) (solid lines) penalizes the violation of the equality constraint \(t=0\).%
		}%
		\label{fig:barriers_equality}%
	\end{subfigure}
	\caption[]{%
		Graph of \(\psi_{\rho*}\) (left) and \(\psi_{\rho*}^{\rm eq}\) (right) for different barriers \(\b\) (dashed lines).
		The log-like barrier behaves as an intermediate between the classical inverse and logarithmic barriers.%
	}%
	\label{fig:barriers}%
\end{figure}

The appeal of \(\psi\) and \(\psi^{\rm eq}\) for the sake of addressing problem \eqref{eq:P} lies in their behavior when \(\mu\) is driven to 0, as they respectively approximate the inequality and equality sharp \(L^1\) penalization \(\alpha[{}\cdot{}]_+\) and \(\alpha|{}\cdot{}|\).
In this respect, \cref{fig:b_rho_equality} demonstrates the advantage of our tailored treatment of equality constraints \(\ceq(\x)=\zeros\) (solid lines) as opposed to a naive use of double inequalities \(\pm\ceq(\x)\leq\zeros\) (dotted lines).
Indeed, the latter approach results in the sum
	\begin{equation}\label{eq:psipm}
		t\mapsto\psi^{\pm}(t)\coloneqq \psi(t)+\psi(-t)
	\end{equation}
appearing in the formulation \eqref{eq:Qam}, and because of their opposite slopes around the origin a flat region appears that hinders algorithmic efficiency.
In contrast, the combined marginalization in the definition \eqref{eq:psieq} results in an envelope function \(\psi^{\rm eq}\) that better approximates the sharp \(L^1\) penalty, see also \cref{fig:b_mu_equality}.

\begin{theorem}\label{thm:rho-infty}%
	Suppose that \cref{ass:b} holds.
	Then, \(\psi_{\rho*}/\rho*\to[{}\cdot{}]_+\) and \(\psi_{\rho*}^{\rm eq}/\rho*\to{|{}\cdot{}|}\)  pointwise as \(\rho*\nearrow\infty\).
		Under the assumption that \(\b>0\), both sequences are pointwise decreasing.
	\end{theorem}
\begin{proof}
	See \cref{proof:thm:rho-infty}.
\end{proof}

\begin{figure}[tbh]
	\newcommand{\coeff}{1}%
	\def\xmin{-2}\def\xmax{3}%
	\def\ymin{-0.566}\def\ymax{8.5}%
	\begin{subfigure}[t]{0.49\linewidth}
		\centering
		\includetikz[width=0.8\linewidth]{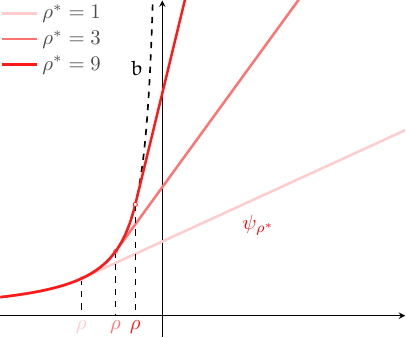}
		\caption[]{%
			Function \(\psi_{\rho*}\) agrees with the barrier \(\b\) until its slope equals \(\rho*\) (at \(\rho\coloneqq\b*'(\rho*)\)), and then continues linearly with slope \(\rho*\).
			Apparently, \(\psi_{\rho*}\nearrow \b\) as \(\rho*\nearrow\infty\).
		}%
		\label{fig:b_rho}%
	\end{subfigure}
	\hfill
	\def\xmin{-2}\def\xmax{3}%
	\def\ymin{-0.966}\def\ymax{14.5}%
	\begin{subfigure}[t]{0.49\linewidth}
		\centering
		\includetikz[width=0.8\linewidth]{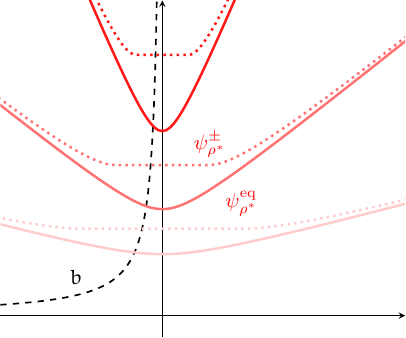}
		\caption[]{%
			Function \(\psi_{\rho*}^{\pm}\) (dotted lines) as in \eqref{eq:psipm} generates a flat region around the origin, due to the opposite slopes \(\rho*\) and \(-\rho*\), which does not appear with \(\psi_{\rho*}^{\rm eq}\) (solid lines).%
		}%
		\label{fig:b_rho_equality}%
	\end{subfigure}
	\caption[]{%
		Graph of \(\psi_{\rho*}\) (left) and \(\psi_{\rho*}^{\rm eq}\) (right) for different values of \(\rho*\).
		These examples employ the inverse barrier \(\b(t)=-\frac{1}{t}+\indicator_{(-\infty,0)}\).
	}%
	\label{fig:b}%
\end{figure}

\begin{figure}[tbh]
	\newcommand{\coeff}{1}%
	\def\xmin{-2}\def\xmax{3}%
	\def\ymin{-0.3}\def\ymax{4.5}%
	\begin{subfigure}[t]{0.48\linewidth}
		\centering
		\includetikz[width=0.8\linewidth]{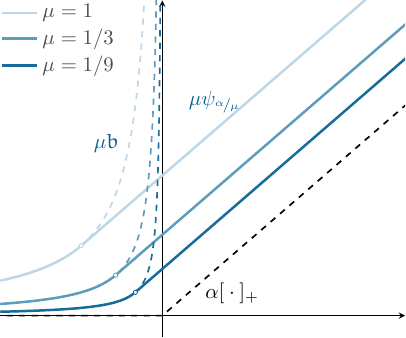}
		\caption[]{%
			As \(\mu\searrow0\), \(\mu\psi\) converges to the sharp \(L^1\) penalty \(\alpha[{}\cdot{}]_+\) while maintaining the same slope \(\alpha\) after the breakpoints.
		}%
		\label{fig:b_mu}%
	\end{subfigure}
	\def\ymin{-0.966}\def\ymax{14.5}%
	\hfill
	\def\ymin{-0.366}\def\ymax{5.5}%
	\begin{subfigure}[t]{0.48\linewidth}
		\centering
		\includetikz[width=0.8\linewidth]{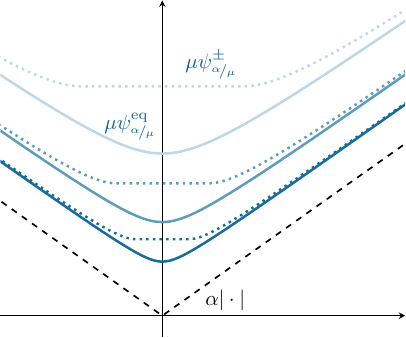}
		\caption[]{%
			As \(\mu\searrow0\), \(\mu\psi^{\rm eq}\) (solid lines) converges to the sharp \(L^1\) penalty \(\alpha|{}\cdot{}|\).
			So does the function \(\mu\psi^{\pm}\) (dotted lines), but with a flat region around zero.%
		}%
		\label{fig:b_mu_equality}%
	\end{subfigure}
	\caption[]{%
		Limiting behavior of \(\mu\psi\) (left) and \(\mu\psi^{\rm eq}\) (right) with constant \(\alpha\) as \(\mu\searrow0\).
		These examples employ the inverse barrier \(\b(t)=-\frac{1}{t}+\indicator_{(-\infty,0)}\).%
	}%
	\label{fig:b_equality}%
\end{figure}

Problem \eqref{eq:Pam} is `unconstrained', in the sense that no explicit ambient constraints are provided, yet stationarity notions relative to it bear a close resemblance with \KKTa-optimality.

\begin{lemma}\label{thm:gradPsi}%
	Suppose that \cref{ass:basic,ass:b} hold.
	Then, for any \(\alpha,\mu>0\) and \(\x\in\R^n\) one has
	\begin{align*}
		\partial\left[\cost+\mu\Psi\circ\c+\mu\Psi^{\rm eq}\circ\ceq\right](\x)
	=
		\partial\cost(\x)
	&{}
		+
		\mu\sum_{i=1}^m \psi'(\c_i(\x))\nabla\c_i(\x)
	\\
	&{}
		+
		\mu\sum_{j=1}^\meq (\psi^{\rm eq})'(\ceq_j(\x))\nabla\ceq_j(\x).
	\end{align*}
	In particular, for any \(\varepsilon\geq0\) a point \(\xbar^{\alpha,\mu}\in\R^n\) is \(\varepsilon\)-stationary for \eqref{eq:Pam} if the pair \((\ybar^{\alpha,\mu},\yeqbar^{\alpha,\mu})\in\R^m\times\R^{\meq}\) given by
	\[
		\ybar^{\alpha,\mu}_i
	\coloneqq
		\mu\psi'(\c_i(\xbar^{\alpha,\mu}))
	\in
		(0,\alpha]
	\quad\text{and}\quad
		\yeqbar^{\alpha,\mu}_j
	\coloneqq
		\mu(\psi^{\rm eq})'(\ceq_j(\xbar^{\alpha,\mu}))
		\in
		(-\alpha,\alpha),
	\]
	\(i=1,\dots,m\),
	\(j=1,\dots,\meq\),
	satisfies
	\[
		\dist_{\partial\cost(\xbar^{\alpha,\mu})}\bigl(
			-\trans{\jac\c(\xbar^{\alpha,\mu})}\ybar^{\alpha,\mu}
			-\trans{\jac\ceq(\xbar^{\alpha,\mu})}\yeqbar^{\alpha,\mu}
		\bigr)
	\leq
		\varepsilon.
	\]
\end{lemma}
\begin{proof}
	The expression of the subdifferential follows from the continuous differentiability of \(\c\) and \(\ceq\) (\cref{ass:basic}), that of \(\psi\) and \(\psi^{\rm eq}\) (\cref{thm:psi,thm:psieq}), and their separable structure as in \eqref{eq:Psis_separable}.
	The inclusion of each element of \(\ybar^{\alpha,\mu}\) in the appropriate interval \((0,\alpha]\) follows from \eqref{eq:psi'}, and similarly the claimed bounds on the components of \(\yeqbar^{\alpha,\mu}\) follow from \eqref{eq:psieq'}.
\end{proof}

The information we gain from \cref{thm:gradPsi} is that whenever \(\xbar\) is \(\varepsilon\)-stationary for \eqref{eq:Pam}, then the triplet \((\xbar,\ybar,\yeqbar)\) with \(\ybar=\mu\Psi'(\c(\xbar))\) and  \(\yeqbar=\mu(\Psi^{\rm eq})'(\ceq(\xbar))\) satisfies all the conditions of \eKKTa[(\epsilon_{\rm p},\varepsilon)] optimality, possibly with the exception of the complementarity slackness involving \(s(\xbar,\ybar,\yeqbar)\).

In conclusion of this section we emphasize that favorable features of the original problem \eqref{eq:P} are likely to be preserved in the formulation \eqref{eq:Qa}.
As expectable, and as explicitly mentioned in \cref{thm:psi,thm:psieq}, convexity is one such property.
More notably, under minimal additional assumptions, whenever \(\c\) and \(\ceq\) are Lipschitz differentiable they remain so after the composition with \(\psi\) and \(\psi^{\rm eq}\).
This consititutes a significant departure from, say, augmented Lagrangian or penalty methods where such property is lost in the composition with quadratic functions.
We exemplify this fact with the following lemma; we note that the statement holds under more general conditions, but a detailed exploration of these broader cases lies beyond the scope of this paper.

\begin{lemma}\label{thm:lipdiff}%
	Suppose that \cref{ass:b} holds, and let \(\func{c}{\R^n}{\R}\) be a Lipschitz-differentiable function.
	Suppose that \(c\) is Lipschitz continuous on the sublevel set \(\set{\x\in\R^n}[c(\x)\leq0]\) (as is the case when it is lower bounded \cite[Lem. 2.3]{hermans2020qpalm}).
	Then, \(\psi_{\rho*}\circ c\) is Lipschitz differentiable for any \(\rho*>0\).
\end{lemma}
\begin{proof}
	Let \(L_c\) and \(\ell\) denote the Lipschitz constant of \(\nabla c\) (on \(\R^n\)) and that of \(c\) on \(\set{\x\in\R^n}[c(\x)\leq0]\), respectively.
	According to \cref{thm:psi}, \(\psi_{\rho*}\) is \(\rho*\)-Lipschitz continuous, coincides with \(\b\) on \((-\infty,\rho]\), and is then linear with slope \(\rho*\) on \((\rho,\infty)\), where \(\rho\coloneqq\b*'(\rho*)<0\).
	Fix \(\x,\y\in\R^n\), and without loss of generality assume that \(c(\x)\leq c(\y)\).
	We have
	\begin{align*}
		\|\nabla(\psi_{\rho*}\circ c)(\x)-\nabla(\psi_{\rho*}\circ c)(\y)\|
	={} &
		\|\psi_{\rho*}'(c(\x))\nabla c(\x)-\psi_{\rho*}'(c(\y))\nabla c(\y)\|
	\\
	\leq{} &
		\psi_{\rho*}'(c(\x))\|\nabla c(\x)-\nabla c(\y)\|
		+
		\|\nabla c(\y)\||\psi_{\rho*}'(c(\x))-\psi_{\rho*}'(c(\y))|
	\\
	\leq{} &
		\rho* L_c\|\x-\y\|
		+
		\|\nabla c(\y)\|\bigl(\psi_{\rho*}'(c(\y))-\psi_{\rho*}'(c(\x))\bigr).
	\end{align*}
	It remains to account for the second term in the last sum.
	If \(c(\x)\leq c(\y)\leq\rho\), then \(\psi_{\rho*}\) coincides with \(\b\) in all occurrences, and the term can be upper bounded as
	\(
		B\ell^2
		\|\x-\y\|
	\),
	where \(B\coloneqq\max_{(-\infty,\rho]}\b''\) is a Lipschitz modulus for \(\b'\) on \((-\infty,\rho]\).
	If \(c(\x)\leq\rho<c(\y)\), then \(\psi_{\rho*}'(c(\y))=\psi_{\rho*}'(\rho)\) and, by continuity, there exists \(t\in[0,1]\) such that \(c(\x+t(\y-\x))=\rho\), so that \(\psi_{\rho*}'(c(\x+t(\y-\x)))=\psi_{\rho*}'(\rho)\), resulting in the same bound
	\(
		Bt\ell^2
		\|\x-\y\|
	\leq
		B\ell^2
		\|\x-\y\|
	\).
	Lastly, if \(\rho\leq c(\x)\leq c(\y)\) then the last term is zero.
	In all cases we conclude that
	\[
		\|\nabla(\psi_{\rho*}\circ c)(\x)-\nabla(\psi_{\rho*}\circ c)(\y)\|
	\leq
		\bigl(\tfrac\alpha\mu L+B\ell^2\bigr)
		\|\x-\y\|
	\quad
		\forall\x,\y\in\R^n,
	\]
	proving the claim.
\end{proof}

	\section{Algorithmic framework}\label{sec:Algorithm}%
		The main ingredient for the proposed numerical scheme is the penalty-barrier problem \eqref{eq:Pam}.
As shown in the previous section, the cost function in \eqref{eq:Pam} converges pointwise to the original hard-constrained cost \(\cost+\indicator_{\R_-^m}\circ\c+\indicator_{\set{\zeros}}\circ\ceq\) of \eqref{eq:P} as \(\mu\searrow0\) and \(\alpha\nearrow\infty\).
Following a homotopic rationale, this motivates solving (up to approximate local optimality) instances of \eqref{eq:Pam} for progressively small values of \(\mu\) and larger values of \(\alpha\).
This is the leading idea of the algorithmic framework of \cref{alg:general} presented in this section, whose name `\pippo' evokes the key underlying feature of \emph{margi}nalization discussed in \cref{sec:IP}.
The update rules for the coefficients are carefully designed so as to ensure that the output satisfies suitable optimality conditions for the original problem \eqref{eq:P}, as well as to prevent the \(L^1\) penalization parameter \(\alpha\) in \eqref{eq:Pam} from divergent behaviors under favorable conditions on the problem.
This is the reason behind the involvement of the conjugate \(\b*\) in the update criterion at \cref{state:general:pk>eps}, as will be revealed in \cref{sec:barrierProp,sec:convex} through a systematic study of the properties of the barrier \(\b\) in the generality of \cref{ass:basic} as well as when specialized to the convex case.

\begin{algorithm}
	\caption{\protect\pippo*: A combined penalty and barrier framework for constrained optimization}
	\label{alg:general}
	\begin{algorithmic}[1]
\item[\algfont{require}]
	tolerances \(\epsilon_{\rm p},\epsilon_{\rm d}\geq0\);~
	parameters \(\alpha_0,\mu_0>0\), \(\varepsilon_0\geq\epsilon_{\rm d}\),
	\(\delta_\alpha>1\) and \(\delta_\varepsilon,\delta_\mu\in(0,1)\)%

\itemsep=0.75ex
\item[\algfont{repeat for} \(k=0,1,\dots\)]

\itemsep=0.9ex
\State \label{state:general:xk}%
	Find an \(\varepsilon_k\)-stationary point \(\x^k\) for \eqref{eq:Pam} with \((\alpha,\mu)=(\alpha_k,\mu_k)\)

\State \label{state:general:yk}%
	set
	\(
		\y_i^k
	=
		\mu_k\psi_{\nicefrac{\alpha_k}{\mu_k}}'(\c_i(\x^k))
	\)
	as in \eqref{eq:psi'},
	\(i=1,\dots,m\)

\State \label{state:general:yeqk}%
	set
	\(
		\yeq_j^k
	=
		\mu_k(\psi_{\nicefrac{\alpha_k}{\mu_k}}^{\rm eq})'(\ceq_j(\x^k))
	\)
	as in \eqref{eq:psieq'},
	\(j=1,\dots,\meq\)

\State \label{state:general:pk}%
	\(
		p_k
	=
		\max\set{\|[\c(\x^k)]_+\|_\infty, \|\ceq(\x^k)\|_\infty}
	\)
\Comment{constraints violation}

\State \label{state:general:sk}%
	\(
		s_k=s(\x^k,\y^k,\yeq^k)
	\)
	as in \eqref{eq:sk}
\Comment{complementarity violation}

\If{
	\((\varepsilon_k,p_k,s_k) \leq (\epsilon_{\rm d},\epsilon_{\rm p},\epsilon_{\rm p})\)
}
	\Statex*
		\algfont{return} \((\x^k,\y^k)\) \eKKT[(\epsilon_{\rm p},\epsilon_{\rm d})] pair for \eqref{eq:P}
\EndIf\vspace{-1.5ex}%

\State
	\(
	\varepsilon_{k+1}
	=
	\max\set{\delta_\varepsilon\varepsilon_k,\,\epsilon_{\rm d}}
	\)\label{state:general:update_eps}

\State \label{state:general:pk>eps}%
	\algfont{if}~
	\(
		p_k
	>
		\max\set{
			\epsilon_{\rm p},
			2(m+\meq)
			\frac{-\b*(\nicefrac{\alpha_k}{\mu_k})}{\nicefrac{\alpha_k}{\mu_k}}
		}
	\)
	~\algfont{then}~
	\(\alpha_{k+1}=\delta_\alpha\alpha_k\),
	~\algfont{else}~
	\(\alpha_{k+1}=\alpha_k\)%

\setcounter{copycounter}{\value{ALG@line}}%
\State \label{state:general:update_mu}%
	\algfont{if}~
	\(s_k>\epsilon_{\rm p}\)
	~\algfont{or}~
	\(\alpha_{k+1}=\alpha_k\)
	~\algfont{then}~
	\(\mu_{k+1}=\delta_\mu\mu_k\),
	~\algfont{else}~
	\(\mu_{k+1}=\mu_k\)%
\end{algorithmic}

\end{algorithm}

\Cref{alg:general} is not tied to any particular solver for addressing each instance of \eqref{eq:Pam} at \cref{state:general:xk}.
Whenever \(q\) amounts to the sum of a differentiable and a prox-friendly function (in the sense that its proximal mapping is easily computable),
such structure is retained by the cost function in \eqref{eq:Pam}, indicating that proximal-gradient based methods are suitable candidates.
This was also the case in the purely interior-point based \ipprox{} of \cite{demarchi2024interior}, which considers a plain proximal gradient with a backtracking routine for selecting the stepsizes.
Differently from the subproblems of \ipprox{} in which the differentiable term is extended-real valued, the differentiable term in \eqref{eq:Pam} is smooth \emph{on the whole \(\R^n\)}.
This enables the employment of more sophisticated proximal-gradient-type algorithms such as \panocp{} \cite{stella2017simple,demarchi2022proximal} that make use of higher-order information to considerably enhance convergence speed.
This claim will be substantiated with numerical evidence in \cref{sec:numerical}; in this section, we instead focus on properties of the \emph{outer} \cref{alg:general} that are independent of the \emph{inner} solver.

\begin{remark}\label{rem:wellposedsubproblems}%
	Throughout our convergence analysis, it is assumed that \cref{alg:general} is well-defined, thus requiring that each subproblem \eqref{eq:Pam} at \cref{state:general:xk} admits an approximate stationary point.
	Moreover, some of the following statements assume the existence of an accumulation point \(\bar{\x}\) for a sequence \(\seq{\x^k}\) generated by \cref{alg:general}.
	In general, these preconditions can be verified with coercivity or (level) boundedness arguments; for instance, the objective function of \eqref{eq:Pam} is bounded from below whenever \(\dom q\) is a compact set.
\end{remark}

\begin{remark}[parameter update variant]\label{rem:variant}%
	According to \cref{state:general:pk>eps,state:general:update_mu}, at each iteration either \(\alpha_k\) or \(\mu_k\) (possibly both) is updated.
	With the aim of slowing down the reduction of \(\mu_k\) to attenuate unnecessary ill-conditioning,
	another viable option is to restrict the update condition as in
	\begin{algorithmic}[1]%
	\makeatletter
		\renewcommand{\alglinenumber}[1]{\footnotesize\textbf{\ref*{alg:general}}.\oldstylenums{\arabic{ALG@line}}':}%
		\setcounter{ALG@line}{\value{copycounter}}%
	\makeatother
	\State \label{state:general:update_mu'}%
		\algfont{if}~
		\(s_k>\epsilon_{\rm p}\)
		~\algfont{or}~
		\((\alpha_{k+1},\varepsilon_{k+1})=(\alpha_k,\varepsilon_k)\)
		~\algfont{then}~
		\(\mu_{k+1}=\delta_\mu\mu_k\),
		~\algfont{else}~
		\(\mu_{k+1}=\mu_k\)%
	\end{algorithmic}
	allowing also the possibility of neither parameter being updated.
	Such circumstance takes place only if \(\varepsilon_k>\epsilon_{\rm d}\), owing to \cref{state:general:update_eps}.
	In this event, then,
	it is only the stationarity tolerance \(\varepsilon_k\) for \cref{state:general:xk} that is decreased, and the next iteration reduces to solving the same subproblem with higher accuracy.
	To avoid unnecessary notational complexity in the proofs, we adhere to the steps outlined in \cref{alg:general}, while noting that the entire theoretical framework remains valid for this variant, with only minor changes required to the iteration indexing.
	For our numerical tests in \cref{sec:numerical}, however, \cref{state:general:update_mu} of \cref{alg:general} is replaced by the variant above.
\end{remark}

		\subsection{Convergence analysis}
			\begin{lemma}[properties of the iterates]%
	Suppose that \cref{ass:basic,ass:b} hold, and consider the iterates generated by \cref{alg:general}.
	At every iteration \(k\) the following hold:
	\begin{enumerate}
	\item \label{thm:DF}%
		\(\zeros<\y^k\leq\alpha_k\ones\) and \(|\yeq^k|<\alpha_k\ones\).

	\item \label{thm:PO}%
		(\(\x^k\in\dom\cost\) and) \(\dist_{\partial\cost(\x^k)}(-\trans{\jac\c(\x^k)}\y^k-\trans{\jac\ceq(\x^k)}\yeq^k)\leq\varepsilon_k\).

	\item \label{thm:CS}%
		If (\(\epsilon_{\rm p}>0\) and) \(\mu_k\leq\frac{\epsilon_{\rm p}}{2\b'(-\epsilon_{\rm p})}\), then \(s_k\leq\epsilon_{\rm p}\).

	\item \label{thm:rho}%
		For \(k\geq1\), either \(\alpha_k=\delta_\alpha\alpha_{k-1}\) or \(\mu_k=\delta_\mu\mu_{k-1}\) (possibly both);
		in particular, letting \(\rho*_k\coloneqq\nicefrac{\alpha_k}{\mu_k}\) and \(\delta_{\rho*}\coloneqq\min\set{\delta_\alpha,\delta_\mu^{-1}}\) it holds that \(\rho*_k\geq\delta_{\rho*}\rho*_{k-1}\).
	\end{enumerate}
\end{lemma}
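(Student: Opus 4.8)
The plan is to dispatch the four assertions one at a time: each is a short consequence of the construction in \cref{alg:general} together with \cref{thm:gradPsi} and the derivative formula \eqref{eq:psi'}, and the only genuine subtlety is to express everything through the smoothened penalty \(\psi\) rather than through the bare barrier \(\b\).

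For \cref{thm:DF} I would start from the definition of the multiplier at \cref{state:general:yk}, read in its equivalent form \(\y_i^k=\mu_k\psi_{\rho*_k}'(\c_i(\x^k))\) with \(\rho*_k=\nicefrac{\alpha_k}{\mu_k}\). By \eqref{eq:psi'} one has \(\psi_{\rho*}'(t)=\min\set{\b'(t),\rho*}\in(0,\rho*]\): the upper bound is immediate, and positivity follows because \(\b'>0\) on \(\dom\b\) (\cref{ass:b}), while on the linear branch---where \(\b'\) is not even defined---the derivative equals \(\rho*>0\). Multiplying by \(\mu_k>0\) gives \(\y_i^k\in(0,\alpha_k]\), hence in particular \(\zeros\leq\y^k\leq\alpha_k\ones\). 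Assertion \cref{thm:PO} is then just a reading of \cref{thm:gradPsi}: the point \(\x^k\) produced at \cref{state:general:xk} is \(\varepsilon_k\)-stationary for \eqref{eq:Pam} with \((\alpha,\mu)=(\alpha_k,\mu_k)\), so that lemma yields directly \(\dist_{\partial\cost(\x^k)}(-\trans{\jac\c(\x^k)}\y^k)\leq\varepsilon_k\), the multiplier appearing there being exactly the \(\y^k\) of \cref{state:general:yk}; finiteness of this distance forces \(\partial\cost(\x^k)\neq\emptyset\) and thus \(\x^k\in\dom\cost\).

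\cref{thm:CS} I would prove entrywise, splitting on the magnitude of \(\c_i(\x^k)\) and taking a maximum at the end. If \(\c_i(\x^k)>-\epsilon_{\rm p}\), then \([\c_i(\x^k)]_-<\epsilon_{\rm p}\), so \(\min\set{\y_i^k,[\c_i(\x^k)]_-}\leq[\c_i(\x^k)]_-<\epsilon_{\rm p}\). If instead \(\c_i(\x^k)\leq-\epsilon_{\rm p}<0\), then \(\c_i(\x^k)\in\dom\b\) and monotonicity of \(\b'\) (from \(\b''>0\) in \cref{ass:b}) gives \(\b'(\c_i(\x^k))\leq\b'(-\epsilon_{\rm p})\); together with the hypothesis \(\mu_k\leq\nicefrac{\epsilon_{\rm p}}{\b'(-\epsilon_{\rm p})}\) this yields \(\y_i^k\leq\mu_k\b'(\c_i(\x^k))\leq\mu_k\b'(-\epsilon_{\rm p})\leq\epsilon_{\rm p}\), whence again \(\min\set{\y_i^k,[\c_i(\x^k)]_-}\leq\epsilon_{\rm p}\). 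Taking the maximum over \(i\) gives \(s_k\leq\epsilon_{\rm p}\).

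Lastly, \cref{thm:rho} is bookkeeping on the update executed at \cref{state:general:pk>eps} at the end of iteration \(k-1\): in its first branch \(\alpha_k=\delta_\alpha\alpha_{k-1}\) always (and \(\mu_k\leq\mu_{k-1}\), since \(\delta_\mu<1\)), whereas in its second branch \(\mu_k=\delta_\mu\mu_{k-1}\) always---which is precisely the stated dichotomy. For the ratio, the first branch gives \(\rho*_k=\nicefrac{\alpha_k}{\mu_k}\geq\delta_\alpha\nicefrac{\alpha_{k-1}}{\mu_{k-1}}=\delta_\alpha\rho*_{k-1}\), and the second gives \(\rho*_k=\delta_\mu^{-1}\rho*_{k-1}\); since \(\delta_{\rho*}=\min\set{\delta_\alpha,\delta_\mu^{-1}}\) does not exceed either of \(\delta_\alpha,\delta_\mu^{-1}\), both cases deliver \(\rho*_k\geq\delta_{\rho*}\rho*_{k-1}\). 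Overall the proof is a chain of elementary verifications; the step I would be most careful about is routing \cref{thm:DF,thm:CS} through \(\psi'\) via \eqref{eq:psi'}---so as never to evaluate \(\b'\) outside \(\dom\b\)---and identifying \(\y^k\) with the multiplier furnished by \cref{thm:gradPsi}.
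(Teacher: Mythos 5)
Your proposal is correct and follows essentially the same route as the paper: \cref{thm:DF} from the definition of \(\y^k\) and positivity of \(\b'\) (the paper is terser but identical in substance), \cref{thm:PO} as a direct reading of \cref{thm:gradPsi}, \cref{thm:CS} by the same entrywise case split on whether \(\c_i(\x^k)\) lies below \(-\epsilon_{\rm p}\) combined with monotonicity of \(\b'\), and \cref{thm:rho} by inspecting the two branches of the update at \cref{state:general:pk>eps}. Your extra care in routing \cref{thm:DF} through \(\psi'\) via \eqref{eq:psi'} (so that \(\b'\) is never evaluated outside its domain) and in writing out the ratio estimates for \cref{thm:rho} only makes explicit what the paper leaves implicit.
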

\begin{proof}
	Assertions \ref{thm:DF} and \ref{thm:PO} follow from \cref{thm:gradPsi}.
	Assertion \ref{thm:rho} is obvious by observing that whenever \(\alpha_{k+1}=\alpha_k\) the update \(\mu_{k+1}=\delta_\mu\mu_k\) is enforced.

	We finally turn to assertion \ref{thm:CS}, and suppose that \(\mu_k\leq\frac{\epsilon_{\rm p}}{2\b'(-\epsilon_{\rm p})}\).
	Then, for all \(i\) such that \([\c_i(\x^k)]_->\epsilon_{\rm p}\) (or, equivalently, \(\c_i(\x^k)<-\epsilon_{\rm p}\)), one has
	\[
		\y^k_i
	=
		\mu_k\psi_{\nicefrac{\alpha_k}{\mu_k}}'(\c_i(\x^k))
	\leq
		\mu_k\b'(\c_i(\x^k))
	\leq
		\mu_k\b'(-\epsilon_{\rm p})
	\leq
		{\tfrac{1}{2}}\epsilon_{\rm p}
	\leq
		\epsilon_{\rm p},
	\]
	where the first inequality follows from the definition of \(\y^k\) together with \eqref{eq:psi'}, and the second one owes to monotonicity of \(\b'\).
	On the other hand, for all \(i\) such that \(\c_i(\x^k)>\epsilon_{\rm p}\) (in fact, more generally when \(\c_i(\x^k)>0\)), one has that
	\(
		\y^k_i
	=
		\mu_k\psi_{\nicefrac{\alpha_k}{\mu_k}}'(\c_i(\x^k))
	=
		\alpha_k
	\),
	cf. \eqref{eq:psi'}.
	Next, if \(j\) is such that \(\ceq_j(\x^k)>\epsilon_{\rm p}\), one has that
	\[
		\yeq^k_j
	=
		\mu_k(\psi_{\nicefrac{\alpha_k}{\mu_k}}^{\rm eq})'(\ceq_j(\x^k))
	\geq
		\mu_k\bigl(\nicefrac{\alpha_k}{\mu_k}-2\b'(-\epsilon_{\rm p})\bigr)
	\geq
		\alpha-\epsilon_{\rm p},
	\]
	where the first inequality follows from \cref{thm:psieq}.
	By the same arguments, we deduce that
	\(
		\yeq^k_j
	\leq
		-\alpha+\epsilon_{\rm p}
	\)
	holds for all \(j\) such that \(\ceq_j(\x^k)<-\epsilon_{\rm p}\).
	In summary, for all \(i=1,\dots,m\) at least one among \(\y_i^k\) and \([\c_i(\x^k)]_-\) is not larger than \(\epsilon_{\rm p}\), and at least one among \(\alpha-\y_i^k\) and \([\c_i(\x^k)]_+\) is zero.
	Similarly, for all \(j=1,\dots,\meq\) at least one among \(\alpha-\yeq_j^k\sign(\ceq_j(\x^k))\) and \(|\ceq_j(\x^k)|\) is not larger than \(\epsilon_{\rm p}\),
	overall proving that \(s_k\leq\epsilon_{\rm p}\).
\end{proof}

\begin{corollary}[stationarity of feasible limit points]\label{thm:feas=>AKKT}%
	Let \cref{ass:basic,ass:b} hold, and consider the iterates generated by \cref{alg:general}.
	If the algorithm runs indefinitely, then \(-\frac{\mu_k}{\alpha_k}\b*(\nicefrac{\alpha_k}{\mu_k})\searrow0\) as \(k\to\infty\).
	Moreover, any feasible accumulation point of \(\seq{\x^k}\) that belongs to \(\dom\cost\) is \AKKT-optimal for \eqref{eq:P}.
\end{corollary}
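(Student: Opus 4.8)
The plan is to carry out the analysis with \(\epsilon_{\rm p}=\epsilon_{\rm d}=0\), the regime in which the stopping test of \cref{alg:general} can never fire, so the outer loop genuinely runs forever. First I would record two consequences of the update rules: since \(\delta_\varepsilon\in(0,1)\) we have \(\varepsilon_{k+1}=\delta_\varepsilon\varepsilon_k\to0\), while \cref{thm:rho} gives \(\rho*_k\geq\delta_{\rho*}\rho*_{k-1}\) with \(\delta_{\rho*}>1\), hence \(\rho*_k\nearrow\infty\). For the first assertion I rewrite \(-\tfrac{\mu_k}{\alpha_k}\b*(\nicefrac{\alpha_k}{\mu_k})=-\b*(\rho*_k)/\rho*_k\) and invoke the computation from the proof of \cref{thm:psirhofrac}, namely \(\tfrac{\de}{\de\tau}\bigl[\tfrac{\b*(\tau)}{\tau}\bigr]=\tfrac{\b(\b*'(\tau))}{\tau^2}>0\); thus \(\tau\mapsto-\b*(\tau)/\tau\) is strictly decreasing, and evaluating along \(\rho*_k\nearrow\infty\) together with the limit \(\b*(\tau)/\tau\to0\) recorded in \eqref{eq:b*/t*} delivers \(-\b*(\rho*_k)/\rho*_k\searrow0\).

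For the second assertion, fix an accumulation point \(\xbar\) with \(\c(\xbar)\leq\zeros\) and a subsequence \(K\) along which \(\x^k\to\xbar\). The clean route is to read each iterate as an approximate KKT point: combining \cref{thm:DF,thm:PO} with the definitions of \(p_k\) and \(s_k\) at \cref{state:general:pk,state:general:sk}, the pair \((\x^k,\y^k)\) is an \eKKT[\vec{\epsilon}^k] point for \eqref{eq:P} with \(\vec{\epsilon}^k=(\max\set{p_k,s_k},\varepsilon_k)\). By the equivalence recalled after \cite[Thm. 3.1]{birgin2014practical}, it then suffices to prove \(\vec{\epsilon}^k\to(0,0)\) along \(K\) in order to conclude that \(\xbar\) is \AKKT-optimal. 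The dual residual \(\varepsilon_k\to0\) is already in hand, and continuity of \(\c\) with \(\c(\xbar)\leq\zeros\) yields \(p_k=\|[\c(\x^k)]_+\|_\infty\to0\) along \(K\); so the whole matter reduces to the slackness residual \(s_k\).

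Showing \(s_k\to0\) along \(K\) is the step I expect to be the main obstacle. I would split the indices by the sign of \(\c_i(\xbar)\). When \(\c_i(\xbar)=0\), continuity gives \([\c_i(\x^k)]_-\to0\), so \(\min\set{\y_i^k,[\c_i(\x^k)]_-}\to0\) regardless of \(\y_i^k\). When \(\c_i(\xbar)<0\), eventually \(\c_i(\x^k)<0\) and \(\y_i^k\leq\mu_k\b'(\c_i(\x^k))\) by \cref{thm:gradPsi}, with \(\b'(\c_i(\x^k))\to\b'(\c_i(\xbar))<\infty\) by monotonicity of \(\b'\); hence the claim hinges on proving \(\mu_k\to0\) whenever \(\c(\xbar)\neq\zeros\). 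This is the delicate update-rule argument: \(\seq{\mu_k}\) is nonincreasing, so if it failed to vanish it would be eventually constant, and by \cref{state:general:pk>eps} the coefficient \(\mu\) is left unchanged only when \(s_k\leq\epsilon_{\rm p}=0\), i.e.\ with \(s_k=0\). Since \(\y_i^k>0\) always (as \(\b'>0\)), \(s_k=0\) forces \([\c_i(\x^k)]_-=0\), that is \(\c(\x^k)\geq\zeros\), for all large \(k\); letting \(k\to\infty\) along \(K\) then gives \(\c(\xbar)\geq\zeros\), hence \(\c(\xbar)=\zeros\), contradicting \(\c(\xbar)\neq\zeros\). Therefore \(\mu_k\to0\), which makes \(\y_i^k\to0\) on the strictly feasible indices and completes \(s_k\to0\).

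Collecting the three residuals gives \(\vec{\epsilon}^k\to(0,0)\) along \(K\), so \(\xbar\) is \AKKT-optimal for \eqref{eq:P}. As an equivalent finish one could instead verify \cref{defin:AKKT} by hand: dual feasibility, approximate stationarity and the primal residual are immediate, while complementarity is obtained by perturbing the multipliers exactly as in \cref{thm:ykck}, resetting \(\y_i^k\) to \(0\) on the indices with \(\c_i(\xbar)<0\) — legitimate because there \(\y_i^k\to0\) and \(\jac\c(\x^k)\) remains bounded along \(K\).
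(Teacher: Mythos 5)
Your opening move---restricting the analysis to \(\epsilon_{\rm p}=\epsilon_{\rm d}=0\) on the grounds that this is ``the regime in which \dots{} the outer loop genuinely runs forever''---creates a genuine gap: running indefinitely is \emph{not} equivalent to zero tolerances. The corollary is stated for \cref{alg:general} with arbitrary tolerances, and the loop can perfectly well run forever with \(\epsilon_{\rm p}>0\) (for instance when \(\epsilon_{\rm d}=0\), so that \(\varepsilon_k>\epsilon_{\rm d}\) at every iteration and the stopping test never fires, or in the infeasible scenario of the theorem following the corollary). Your argument for \(\mu_k\to0\) does not survive in that regime: when \(\epsilon_{\rm p}>0\), the parameter \(\mu\) being frozen only tells you \(s_k\leq\epsilon_{\rm p}\), which no longer forces \([\c_i(\x^k)]_-=0\), so your contradiction between \(\c(\xbar)\geq\zeros\) and \(\c_i(\xbar)<0\) evaporates. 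The paper covers this case with a separate, easier argument that your proposal is missing: if \(\epsilon_{\rm p}>0\) and \(\c(\xbar)\leq\zeros\), continuity of \(\c\) gives \(p_k\leq\epsilon_{\rm p}\) for all \(k\in K\) large, so the else-branch of \cref{state:general:pk>eps} fires and \(\mu_{k+1}=\delta_\mu\mu_k\) for infinitely many \(k\); monotonicity of \(\seq{\mu_k}\) then yields \(\mu_k\searrow0\) along the whole sequence.

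Apart from that missing case, what you do is correct and essentially coincides with the paper's own proof of the \(\epsilon_{\rm p}=0\) case: the paper also argues by contradiction that an asymptotically constant \(\mu_k\) forces \(s_k\leq\epsilon_{\rm p}=0\) eventually, which is incompatible with \(\min\set{\y^k_i,[\c_i(\x^k)]_-}>0\) along \(K\) for an index \(i\) with \(\c_i(\xbar)<0\). The only packaging difference is that you route the conclusion through the \(\vec\epsilon\)-KKT/AKKT equivalence quoted after \cite[Thm. 3.1]{birgin2014practical}, whereas the paper verifies \cref{defin:AKKT} directly: by \cref{thm:ykck} it suffices that \(\y^k_i\to0\) on the strictly feasible indices, which is exactly what \(\mu_k\searrow0\) delivers. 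Both finishes are legitimate, and your treatment of the first assertion (monotone vanishing of \(-\b*(\rho*_k)/\rho*_k\) via \cref{thm:rho} and \cref{thm:b*/t*}) matches the paper's. To repair the proposal, keep everything you have and prepend the \(\epsilon_{\rm p}>0\) branch sketched above.
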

\begin{proof}
	The monotonic vanishing of \(-\frac{\mu_k}{\alpha_k}\b*(\nicefrac{\alpha_k}{\mu_k})\) follows from \cref{thm:b*/t*,thm:rho}.
	Suppose that \(\seq{\x^k}[k\in K]\to\xbar\in\dom\cost\) with \(\c(\xbar)\leq\zeros\) and \(\ceq(\xbar)=\zeros\), and let \(i\) be such that \(\c_i(\xbar)<0\) (if such an \(i\) does not exist, then there is nothing to show).
	According to \cref{defin:AKKT,thm:ykck}, it suffices to show that \(\seq{\y_i^k}[k\in K]\to0\); in turn, by definition of \(\y^k\) and continuity of \(\c\) it suffices to show that \(\mu_k\searrow0\).
	If \(\epsilon_{\rm p}>0\), then continuity of \(\c\) implies that \(\alpha_{k+1}=\alpha_k\) for all \(k\in K\) large enough, hence, by virtue of \cref{thm:rho}, \(\mu_{k+1}=\delta_\mu\mu_k\) for all such \(k\).
	Since \(\seq{\mu_k}\) is monotone, in this case \(\mu_k\searrow0\) as \(k\to\infty\).
	Suppose instead that \(\epsilon_{\rm p}=0\), and, to arrive to a contradiction, that \(\mu_k\) is asymptotically constant.
	This implies that \(s_k\leq\epsilon_{\rm p}=0\) eventually always holds, which is a contradiction since
	\[
		s_k
	\geq
		\min\set{\y^k_i,[\c_i(\x^k)]_-}
	\geq
		\min\set{\y^k_i,-\tfrac12\c_i(\xbar)}
	>
		0
	\quad
		\forall k\in K\text{ large,}
	\]
	where the first inequality follows by definition of \(s_k\), cf. \cref{state:general:sk}, the second one for \(k\in K\) large since \(\c_i(\x^k)\to\c_i(\xbar)<0\) as \(K\ni k\to\infty\), and the last one because \(\y^k>\zeros\).
\end{proof}

The update rule for the penalty parameter does not demand (approximate) feasibility, but it depends on a relaxed condition at \cref{state:general:pk>eps}.
By \eqref{eq:b*/t*} in the appendix, the second term vanishes as $\alpha/\mu \to\infty$, so the penalty parameter is eventually increased as needed to achieve $\epsilon_{\rm p}$-feasibility.
The relaxation of this condition using a quantity involving the conjugate \(\b*\) mitigates the growth of \(\alpha\).
Simultaneously, under suitable choices of the barrier \(\b\), it ensures that this parameter remains unchanged only if the constraints violation stays within a controlled range, as will be ultimately demonstrated in \cref{thm:linearsk}.

\begin{theorem}\label{thm:outcomes}
	Suppose that \cref{ass:basic,ass:b} hold, and consider the iterates generated by \cref{alg:general} with \(\epsilon_{\rm p},\epsilon_{\rm d}>0\).
	Then, \(\inf_{k\in\N}\mu_k>0\) and exactly one of the following scenarios occurs:%
	\begin{enumerator}
	\item
		either the algorithm terminates returning an \eKKT[(\epsilon_{\rm p},\epsilon_{\rm d})] stationary point for \eqref{eq:P},
	\item
		or it runs indenfinitely with \(s_k\leq\epsilon_{\rm p}<p_k\) for all \(k\) large enough, and \(\seq{\alpha_k}\nearrow\infty\).
	\end{enumerator}
	In the latter case, if \(\dom\cost\) is closed and \(\cost\) is continuous relative to it, then for any accumulation point \(\xbar\) of \(\seq{\x^k}\) one has that \((\xbar,\cost(\xbar))\) is KKT-stationary for the feasiblity problem
	\begin{equation}\label{eq:minInfeas}
		\minimize_{(\x,t)\in\epi\cost}~\|[\c(\x)]_+\|_1+\|\ceq(\x)\|_1,
	\end{equation}
	in the sense that
	\(
		(\zeros,0)
	\in
		\partial\bigl[\|[\c (\xbar)]_+\|_1+\|\ceq(\xbar)\|_1\bigr]
		\times
		\set{0}
		+
		\ncone_{\epi\cost}(\xbar,\cost(\xbar))
	\).
\end{theorem}
\begin{proof}
	Since \(\mu_{k+1}\leq\mu_k\) for all \(k\), and \(\mu_k\) is linearly reduced whenever \(s_k>\epsilon_{\rm p}\), we conclude that (either the algorithm terminates or) \(s_k\leq\epsilon_{\rm p}\) eventually always holds.

	If the algorithm returns \((\x^k,\y^k,\yeq^k)\), then the compliance with the termination criteria combined with the fact that \(\y^k>\zeros\) for all \(k\), see \cref{thm:DF}, ensures that such triplet meets all conditions in \cref{defin:eKKT}, and hence it is \eKKT[(\epsilon_{\rm p},\epsilon_{\rm d})]-stationary for \eqref{eq:P}.

	Suppose instead that the algorithm does not terminate.
	Clearly, \(\varepsilon_k=\epsilon_{\rm d}\) holds for \(k\) large enough, so that the only unmet termination criterion is eventually \(p_k\leq\epsilon_{\rm d}\).
	Therefore, \(p_k>\epsilon_{\rm p}\) holds for every \(k\) large enough.
	It follows from \cref{thm:rho,thm:b*/t*} that \(-2(m+\meq)\b*(\rho*_k)/\rho*_k\) eventually drops below \(\epsilon_{\rm p}\), implying that the condition for increasing \(\alpha_{k+1}\) at \cref{state:general:pk>eps} reduces to \(p_k>\epsilon_{\rm p}\).
	Having shown that this is eventually always the case, \(\alpha_{k+1}=\delta_\alpha\alpha_k\) always holds for \(k\) large, \(\alpha_k\nearrow\infty\), and \(\mu_k\) is eventually never updated, cf. \cref{state:general:pk>eps}.

	To conclude, suppose that \(\dom\cost\) is closed and that \(\cost\) is continuous relative to this set.
	By \cref{thm:gradPsi}, for every \(k\) we have that there exists \(\vec{\eta}^k\in\R^n\) with \(\|\vec{\eta}^k\|\leq\epsilon_{\rm d}\) such that
	\[
		\vec{\eta}^k
		-
		\trans{\jac\c(\x^k)}\y^k
		-
		\trans{\jac\ceq(\x^k)}\yeq^k
	\in
		\partial\cost(\x^k).
	\]
	Let \(\xbar\) be the limit of a subsequence \(\seq{\x^k}[k\in K]\) and, up to extracting, let \(\lambar\) and \(\lameqbar\) be the limits of \(\seq{\frac{1}{\alpha_k}\y^k}[k\in K]\) and \(\seq{\frac{1}{\alpha_k}\yeq^k}[k\in K]\), respectively.
	The definition of \(\y^k\) and \(\yeq^k\) together with the continuity of \(\c\) and \(\ceq\) yields that
	\[
		\lambar_i
		\begin{ifcases}
			=0 & \c_i(\xbar)<0\\
			=1 & \c_i(\xbar)>0\\
			\in[0,1] & \c_i(\xbar)=0
		\end{ifcases}
	\qquad\text{and}\qquad
		\lameqbar_j
		\begin{ifcases}
			=-1 & \ceq_j(\xbar)<0\\
			=1 & \ceq_j(\xbar)>0\\
			\in[-1,1] & \ceq_j(\xbar)=0
		\end{ifcases}
	\]
	or, equivalently,
	\begin{equation}\label{eq:lambdasubdiff}
		\lambar_i
	\in
		\partial[{}\cdot{}]_+(\c_i(\xbar))
	\quad\text{and}\quad
		\lameqbar_j
	\in
		\partial\|{}\cdot{}\|_1(\ceq_j(\xbar))
	\end{equation}
	for every \(i=1,\dots,m\) and \(j=1,\dots,\meq\).
	Since \(\dom\cost\) is closed and \(\x^k\in\dom\cost\) for all \(k\), one has that \(\cost(\xbar)<\infty\).
	Moreover, it follows from the continuity assumption that \(\cost(\x^k)\to\cost(\xbar)\) as \(K\ni k\to\infty\), hence that
	\(
		-
		\trans{\jac\c(\xbar)}\lambar
		-
		\trans{\jac\ceq(\xbar)}\lameqbar
	\in
		\partial^\infty\cost(\xbar)
	,
	\)
	where \(\partial^\infty\cost(\xbar)\) denotes the horizon subdifferential of \(\cost\) at \(\xbar\).
	Appealing to \cite[Thm. 8.9 and Ex. 8.14]{rockafellar1998variational}, this means that
	\begin{equation}\label{eq:episubdiff}
		\bigl(
			-
			\trans{\jac\c(\xbar)}\lambar
			-
			\trans{\jac\ceq(\xbar)}\lameqbar
			,
			0
		\bigr)
	\in
		\ncone_{\epi\cost}(\xbar,\cost(\xbar))
	=
		\partial\indicator_{\epi\cost}(\xbar,\cost(\xbar)).
	\end{equation}
	Since
	\[
		\partial\|[\c(\x)]_+\|_1
	=
		\sum_{i=1}^m\partial[\c_i(\x)]_+
	=
		\sum_{i=1}^m\partial[{}\cdot{}]_+(\c_i(\x))\nabla\c_i(\x)
	\]
	and similarly
	\[
		\partial\|\ceq(\x)\|_1
	=
		\sum_{j=1}^{\meq}\partial|\ceq_j(\x)|
	=
		\sum_{j=1}^{\meq}\partial|{}\cdot{}|(\ceq_j(\x))\nabla\ceq_j(\x),
	\]
	see \cite[Ex. 10.26]{rockafellar1998variational}, it follows from \eqref{eq:lambdasubdiff} and continuity of \(\|[\c(\x)]_+\|_1\) and \(\|\ceq(\x)\|_1\) that \(\lambar\in\partial\|[\c(\xbar)]_+\|_1\) and \(\lameqbar\in\partial\|\ceq(\xbar)\|_1\).
	Combining with \eqref{eq:episubdiff} concludes the proof.
\end{proof}

\begin{remark}[relaxing continuity of {$\protect\cost$}]\label{thm:noC0}%
	Similarly to the commentary after \cref{thm:ykbounded} pertaining the \emph{limiting} subdifferential \(\partial\cost\), note that continuity of \(\cost\) on its domain is used to guarantee that
	equality \(\partial^\infty\cost(\xbar)=\tilde\partial^\infty\cost(\xbar)\) (as opposed to inclusion \(\partial^\infty\cost(\xbar)\subseteq\tilde\partial^\infty\cost(\xbar)\)) holds for the \emph{horizon} subdifferential \(\partial^\infty\cost(\xbar)\) of \(\cost\) at any point \(\xbar\in\dom\cost\), where for reference we denote
	\[
		\tilde\partial^\infty\cost(\xbar)
	\coloneqq
		\set{\bar{\vec{v}}\in\R^n}[
			\exists\lambda_k\to0,~ (\x^k,\vec{v}^k)\in\graph\hat\partial\cost
			\text{ such that }
			(\x^k,\lambda_k\vec{v}^k)\to(\xbar,\bar{\vec{v}})
		].
	\]
	Here, the definition of \(\tilde\partial^\infty\cost(\xbar)\) matches that of \(\partial^\infty\cost(\xbar)\) except that the \emph{``\(\cost\)-attentive''} constraint \(\cost(\x^k)\to\cost(\xbar)\) is not imposed for the sequence \(\seq{\x^k}\).
	The validity of \cref{thm:outcomes} is thus unaffected if continuity of \(\cost\) on its domain is replaced by \(\partial^\infty\cost\) being as above at any point \(\xbar\in\dom\cost\), and once again this generalization covers important examples such as functions involving \(L^0\)-norm terms.

	On the other hand, note that mere outer semicontinuity of \(\partial\cost\) as in \cref{thm:ykbounded} is not enough to ensure this identity.
	To see this, consider the function \(\func{h}{\R}{\R}\) defined as \(h(x)=0\) for \(x\leq0\) \(h(x)=1-\sqrt{x}\) for \(x>0\).
	One can easily see that \(\hat\partial h=\partial h\) with \(\partial h(x)=\set{\nabla h(x)}\) for \(x\neq 0\) and \(\partial h(0)=[0,\infty)\) is everywhere osc, yet \(\partial^\infty\cost(0)=[0,\infty)\neq\R=\tilde\partial^\infty\cost(0)\).
\end{remark}

The abuse of terminology to express KKT-stationarity in terms of subdifferentials passes through the same construct relating \eqref{eq:Pa} and \eqref{eq:Qa}, in which a slack variable is tacitly introduced to reformulate the \(L^1\) norm; see the discussion in \cref{sec:L1}.
More importantly, the involvement in \eqref{eq:minInfeas} of the epigraph of \(\cost\), as opposed to its domain, is a necessary technicality that cannot be avoided in the generality of \cref{ass:basic}, as we illustrate next.

\begin{remark}[{$\protect\epi\protect\cost$ vs $\protect\dom\protect\cost$}]%
	Stationarity for \eqref{eq:minInfeas} is, in general, weaker than that for the more natural minimal infeasibility violation problem
	\begin{equation}\label{eq:minInfeasDom}
		\minimize_{\x\in\dom\cost}~\|[\c(\x)]_+\|_1+\|\ceq(\x)\|_1.
	\end{equation}
	To see how this notion may be violated, consider \(\cost(x)=\sqrt{|x|}\) and \(c(x)=x+1\), so that \eqref{eq:P} reads
	\[
		\minimize_{x\in\R}~\sqrt{|x|}
	\quad
		\stt{}~x\leq-1.
	\]
	The point \(x^k=0\) is stationary for any subproblem \eqref{eq:Pam} with arbitrary \(\alpha,\mu>0\), and therefore constitutes a feasible choice in \cref{alg:general}.
	However, the limit \(\bar x=0\) of the corresponding constant sequence is not stationary for the minimization of \([x+1]_+\) over \(\dom\cost=\R\).
	Nevertheless,
	\[
		\partial[x+1]_+(0)\times\set{0}+\ncone_{\epi\cost}(0,0)
	=
		\set{(1,0)}+(\R\times\set{0})
	\ni
		(0,0),
	\]
	confirming that \((0,0)\) is stationary for the epigraphical problem \eqref{eq:minInfeas}.
\end{remark}

We next formally illustrate why stationarity for \eqref{eq:minInfeasDom} always implies that for \eqref{eq:minInfeas}, and identify the culprit of a possible discrepancy in uncontrolled growths around \(\xbar\) from within \(\dom\cost\).
To this end, we remind that a proper function \(\func{h}{\R^n}{\Rinf}\) is said to be \DEF{calm} at a point \(\xbar\in\dom h\) relative to a set \(X\ni\xbar\) if
\[
	\liminf_{\substack{X\ni\x\to\xbar\\ \x\neq\xbar}}
	\frac{|h(\x) - h(\xbar)|}{\|\x-\xbar\|}
<
	\infty,
\]
and that this condition is weaker than strict continuity.

\begin{lemma}\label{thm:epiLip}%
	Let \(\func{h}{\R^n}{\Rinf}\) be proper and lsc.
	Then, for any \(\xbar\in\dom h\) one has
	\begin{align}
		\widehat\ncone_{\dom h}(\xbar)
	\subseteq{} &
		\set{\vec{\bar v}\in\R^n}[(\vec{\bar v},0)\in\widehat\ncone_{\epi h}(\xbar,h(\xbar))]
	\nonumber
	\shortintertext{and}
		\ncone_{\dom h}(\xbar)
	\subseteq{} &
		\set{\vec{\bar v}\in\R^n}[(\vec{\bar v},0)\in\ncone_{\epi h}(\xbar,h(\xbar))]
		=
		\partial^\infty h(\xbar).
		\label{eq:NdomSubset}
	\end{align}
	When \(h\) is convex, both inclusions hold as equality.
	More generally, when \(h\) is calm (in particular, if it is strictly continuous) at \(\xbar\) relative to \(\dom h\), then the first inclusion holds as equality, and so does the second one when such property holds not only at \(\xbar\), but also at all points in \(\dom h\) close to it.
\end{lemma}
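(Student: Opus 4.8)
The plan is to read both ``$\subseteq$'' statements off the single geometric relation
\[
	\epi h \subseteq \dom h\times\R = \epi h + (\set{\zeros}\times\R),
\]
which holds because $\epi h$ is invariant under upward vertical shifts and projects onto $\dom h$ under $(\x,t)\mapsto\x$. The right-hand cylinder is a product set, so at the base point $(\xbar,h(\xbar))$ one has $\widehat\ncone_{\dom h\times\R}(\xbar,h(\xbar))=\widehat\ncone_{\dom h}(\xbar)\times\set{0}$ (and likewise for the limiting cone), while the bracketed identity $\set{\vec{\bar v}}[(\vec{\bar v},0)\in\ncone_{\epi h}(\xbar,h(\xbar))]=\partial^\infty h(\xbar)$ is merely the definition of the horizon subdifferential. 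First I would dispatch the \emph{regular} inclusion, which is free: the regular normal cone reverses set inclusions, being the polar of the contingent cone, which only enlarges with the set. Since $\epi h\subseteq\dom h\times\R$ and $(\xbar,h(\xbar))\in\epi h$, this yields
\[
	\widehat\ncone_{\dom h}(\xbar)\times\set{0}=\widehat\ncone_{\dom h\times\R}(\xbar,h(\xbar))\subseteq\widehat\ncone_{\epi h}(\xbar,h(\xbar)).
\]

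For the \emph{limiting} inclusion I would take $\vec{\bar v}\in\ncone_{\dom h}(\xbar)$, realized by $\x^k\to\xbar$ in $\dom h$ with regular normals $\vec v^k\in\widehat\ncone_{\dom h}(\x^k)\to\vec{\bar v}$, and apply the regular inclusion \emph{at each} $\x^k$ to get $(\vec v^k,0)\in\widehat\ncone_{\epi h}(\x^k,h(\x^k))$. Passing to the outer limit then gives $(\vec{\bar v},0)\in\ncone_{\epi h}(\xbar,h(\xbar))$, \emph{provided} the base points $(\x^k,h(\x^k))$ converge to $(\xbar,h(\xbar))$ within $\epi h$. Here is the first delicate point: this requires $h(\x^k)\to h(\xbar)$, which lower semicontinuity alone does not grant. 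I would pass to a subsequence along which $h(\x^k)$ converges; the harmless case is $h(\x^k)\to h(\xbar)$, whereas the case where $h$ ``jumps up'' on approach to $\xbar$ is precisely what must be ruled out, and is exactly where calmness will later intervene.

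The equalities I would obtain by establishing the reverse inclusions. The convex case is cleanest and needs no limiting: for convex $h$ the regular and limiting cones coincide, and $(\vec{\bar v},0)\in\ncone_{\epi h}(\xbar,h(\xbar))$ unwinds, by the support-inequality characterization of normals to convex sets, to $\innprod{\vec{\bar v}}{\x-\xbar}\leq0$ for every $(\x,h(\x))\in\epi h$, i.e.\ for every $\x\in\dom h$, which is exactly $\vec{\bar v}\in\ncone_{\dom h}(\xbar)$; combined with the forward inclusion this gives $\ncone_{\dom h}(\xbar)=\partial^\infty h(\xbar)$. For the calm case the target is the reverse of the regular inclusion, $\set{\vec{\bar v}}[(\vec{\bar v},0)\in\widehat\ncone_{\epi h}(\xbar,h(\xbar))]\subseteq\widehat\ncone_{\dom h}(\xbar)$. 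Given a horizon normal $(\vec{\bar v},0)$, I would test its defining limsup inequality only against epigraph points of the form $(\x,h(\x))$ with $\dom h\ni\x\to\xbar$, so that the vertical slack $|h(\x)-h(\xbar)|$ enters only through the denominator $\|(\x-\xbar,h(\x)-h(\xbar))\|$. Calmness bounds that slack by $O(\|\x-\xbar\|)$, making the denominator comparable to $\|\x-\xbar\|$, whereupon the horizon inequality collapses to $\limsup_{\dom h\ni\x\to\xbar}\innprod{\vec{\bar v}}{\x-\xbar}/\|\x-\xbar\|\leq0$, i.e.\ $\vec{\bar v}\in\widehat\ncone_{\dom h}(\xbar)$. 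Promoting this to the limiting cones forces calmness not only at $\xbar$ but at all nearby points of $\dom h$, so that the estimate survives the outer limit over base points—matching exactly the hypothesis stated for the second equality.

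The main obstacle is the interplay between the vertical $t$-direction and the horizon (zero $t$-component) normals. The forward inclusions fall out of the cylinder picture essentially for free, but every \emph{equality} hinges on excluding horizon normals to $\epi h$ that $\dom h$ does not see; this is governed entirely by the vertical variation of $h$ near $\xbar$, tamed either by convexity (global support inequalities) or by calmness (a linear bound on $|h(\x)-h(\xbar)|$). Securing that control \emph{uniformly} over the approximating base points, rather than merely at $\xbar$, is the genuinely delicate step behind the limiting statements, and the reason the calmness hypothesis must be imposed on a whole neighborhood for the second equality.
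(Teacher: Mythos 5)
Your proof of the first (regular-cone) inclusion via the cylinder \(\epi h\subseteq\dom h\times\R\) and polarity with tangent cones, your convex-case argument, and your treatment of the two calmness equalities are all correct and substantively the same as the paper's proof (the paper verifies the regular inclusion by a direct sequence estimate rather than by antitonicity, and handles the calm cases exactly as you sketch).

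The problem is the second, unqualified inclusion \(\ncone_{\dom h}(\xbar)\subseteq\partial^\infty h(\xbar)\), which the lemma asserts for \emph{every} proper lsc \(h\). You correctly isolate the obstruction---passing to the outer limit requires \(h(\x^k)\to h(\xbar)\), which lower semicontinuity does not provide---but you then defer the problematic ``jump up'' case to calmness, which is not among the hypotheses of this part of the statement; so, as a proof of the lemma as written, your argument is incomplete. Two things are worth knowing. First, the paper's own proof glosses over exactly this point: the sentence ``the same inclusion must then hold for the limiting normal cones'' is precisely the unjustified step, at the same place where you stopped. Second, the gap cannot be closed, because the inclusion is false in this generality. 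Take \(h\colon\R^2\to\Rinf\) with \(h\equiv0\) on the half-plane \(\{x_1\le0\}\), \(h\equiv1\) on the ray \(\{x_1>0,\ x_2=0\}\), and \(h\equiv\infty\) elsewhere; this \(h\) is proper and lsc. At \(\xbar=(0,0)\) one has \(h(\xbar)=0\). Since \(\dom h\) agrees with the line \(\R\times\{0\}\) near each point \((1/k,0)\), the vector \((0,1)\) belongs to \(\widehat\ncone_{\dom h}((1/k,0))\) for every \(k\), hence \((0,1)\in\ncone_{\dom h}(\xbar)\). On the other hand, near \((\xbar,0)\) the epigraph coincides with the convex set \(\{(x_1,x_2,t):x_1\le0,\ t\ge0\}\) (the rest of \(\epi h\) sits at height \(t\ge1\)), so \(\ncone_{\epi h}(\xbar,h(\xbar))=\R_+\times\{0\}\times\R_-\) and therefore \(\partial^\infty h(\xbar)=\R_+\times\{0\}\), which does not contain \((0,1)\). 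The culprit is exactly the jump you flagged: the normal \((0,1)\) is generated along the sequence \((1/k,0)\), on which \(h\equiv1\not\to0=h(\xbar)\). The statement (and both proofs) can be repaired by adding any hypothesis forcing \(h\)-attentive convergence along \(\dom h\): for instance continuity of \(h\) relative to \(\dom h\)---which is what the paper's Assumption 1 grants for the cost \(q\) to which the lemma is actually applied---or calmness at all nearby points, as in the lemma's final clause; under such a hypothesis your outer-limit argument closes exactly as you describe.
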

\begin{proof}
	The relations in the convex case are shown in \cite[Thm. 8.9 and Prop. 8.12]{rockafellar1998variational}; in what follows, we consider an arbitrary proper and lsc function \(h\).
	Let \(\bar v\in\widehat\ncone_{\dom h}(\xbar)\) and let \(\epi h\ni(\x^k,t^k)\to(\xbar,h(\xbar))\).
	Then, there exists \(\varepsilon_k\to0\) such that
	\(
		\innprod{\vec{\bar v}}{\x^k-\xbar}
	\leq
		\varepsilon_k\|\x^k-\xbar\|
	\)
	holds for every \(k\), hence
	\[
		\varepsilon_k
		\left\|\binom{\x^k-\xbar}{t^k-h(\xbar)}\right\|
	\geq
		\varepsilon_k
		\|\x^k-\xbar\|
	\geq
		\innprod{\vec{\bar v}}{\x^k-\xbar}
	=
		\innprod{\binom{\vec{\bar v}}{0}}{\binom{\x^k-\xbar}{t^k-h(\xbar)}}.
	\]
	By the arbitrariness of the sequence, we conclude that \((\vec{\bar v},0)\in\widehat\ncone_{\epi h}(\xbar,h(\xbar))\).
	The same inclusion must then hold for the limiting normal cones, leading to
	\eqref{eq:NdomSubset},
	where the identity follows from \cite[Thm. 8.9]{rockafellar1998variational}.

	Suppose now that there exists \(\kappa>0\) such that \(|h(\x)-h(\xbar)|\leq \kappa\|\x-\xbar\|\) for \(\x\in\dom h\) close to \(\xbar\), and suppose that \((\vec{\bar v},0)\in\widehat\ncone_{\epi h}(\xbar,h(\xbar))\).
	Let \(\dom h\ni\x^k\to\xbar\), and note that \(\epi h\ni(\x^k,h(\x^k))\to(\xbar,h(\xbar))\).
	Then, there exists \(\varepsilon_k\to0\) such that
	\begin{align*}
		\innprod{\vec{\bar v}}{\x^k-\xbar}
	=
		\innprod{\binom{\vec{\bar v}}{0}}{\binom{\x^k-\xbar}{t^k-h(\xbar)}}
	\leq{} &
		\varepsilon_k
		\left\|\binom{\x^k-\xbar}{h(\x^k)-h(\xbar)}\right\|
	\\
	\leq{} &
		\varepsilon_k
		\left\|\binom{\x^k-\xbar}{\kappa\|\x^k-\xbar\|}\right\|
	=
		\varepsilon_k\sqrt{1+\kappa^2}
		\|\x^k-\xbar\|,
	\end{align*}
	where the second inequality holds for \(k\) large enough.
	Arguing again by the arbitrariness of the sequence, we conclude that \(\vec{\bar v}\in\widehat\ncone_{\dom h}(\xbar)\).
	Finally, when \(h\) is calm relative to its domain at all points \(\x\in\dom h\) close to \(\xbar\), then the identity
	\(
		\widehat\ncone_{\dom h}(\x)\times\set{0}
	=
		\widehat\ncone_{\epi h}(\x,h(\x))
	\)
	holds for all such points, and a limiting argument then yields that
	\(
		\ncone_{\dom h}(\xbar)\times\set{0}
	=
		\ncone_{\epi h}(\xbar,h(\xbar))
	\)
	holds for the limiting normal cones.
	Therefore, the inclusion in \eqref{eq:NdomSubset} holds as equality, which concludes the proof.
\end{proof}

	When \(\cost\) is locally Lipschitz relative to its domain, the inclusion
	\(
		\bigl(
			-
			\trans{\jac\c(\xbar)}\lambar
			-
			\trans{\jac\ceq(\xbar)}\lameqbar
			,
			0
		\bigr)
	\in
		\partial\indicator_{\epi\cost}(\xbar,\cost(\xbar))
	\)
	derived in \eqref{eq:episubdiff} thus simplifies as
	\(
		-
		\trans{\jac\c(\xbar)}\lambar
		-
		\trans{\jac\ceq(\xbar)}\lameqbar
	\in
		\partial\indicator_{\dom\cost}(\xbar)
	\),
	and the conclusions of \cref{thm:outcomes} can be strengthened as follows.

	\begin{corollary}\label{thm:outcomesLip}%
		Additionally to \cref{ass:basic,ass:b}, suppose that \(\cost\) is locally Lipschitz continuous on its domain, assumed to be closed.
		Consider the iterates generated by \cref{alg:general} with \(\epsilon_{\rm p},\epsilon_{\rm d}>0\).
		Then, \(\inf_{k\in\N}\mu_k>0\) and exactly one of the following scenarios occurs:%
		\begin{enumerator}
		\item
			either the algorithm terminates returning an \eKKT[(\epsilon_{\rm p},\epsilon_{\rm d})] stationary point for \eqref{eq:P},
		\item
			or it runs indenfinitely with \(s_k\leq\epsilon_{\rm p}<p_k\) for all \(k\) large enough, and \(\seq{\alpha_k}\nearrow\infty\).
			Moreover, any accumulation point \(\xbar\) of \(\seq{\x^k}\) is KKT-stationary for the feasiblity problem
			\[
				\minimize_{\x\in\dom\cost}~\|[\c(\x)]_+\|_1+\|\ceq(\x)\|_1,
			\]
			in the sense that
			\(
				\zeros
			\in
				\partial\bigl[\|[\c(\xbar)]_+\|_1+\|\ceq(\xbar)\|_1\bigr]
				+
				\ncone_{\dom\cost}(\xbar)
			\).
		\end{enumerator}
	\end{corollary}

	Even under smoothness assumptions on \(\cost\), the outcomes presented in \cref{thm:outcomesLip} are usual for nonlinear programming solvers in the sense that, in general, one cannot exclude the possibility that the iterates remain trapped in an infeasible region.
	If \cref{alg:general} achieves approximate feasibility, then outcome (A) guarantees that it returns an approximate KKT stationary point for \eqref{eq:P}.
	Otherwise, outcome (B) specifies that the constraint violation is increasingly emphasized and \cref{alg:general} steers toward the solution of a feasibility problem.
	Several analogous results appear in the literature, such as \cite[Thm. 4.4]{conn1991globally}, \cite[Thm. 3.3]{demarchi2023constrained} and \cite[Thm. 6.2]{birgin2014practical}, among others.
	Stronger convergence guarantees can be established based on more restrictive assumptions,
	which are not of interest here beyond this remark.

	In practice, well-established solvers such as \ipopt{} \cite{waechter2006implementation} rely on auxiliary procedures whose purpose is to compute a new iterate that yields sufficient improvement.
	When in difficulty, \ipopt{} switches to a \emph{feasibility restoration phase} that attempts to decrease the constraint violation.
	Equipped with this mechanism, all the limit points it generates are feasible under the (tautological) assumption that ``the feasibility restoration phase $\ldots$ always terminates successfully'' \cite[Ass. G and Thm. 1]{waechter2005line}.

	Another strategy is that of \cite[\S 3.2]{curtis2012penalty}, whose update rules of penalty (and barrier) parameters are designed to result in rapid convergence to optimal points of the original problem \eqref{eq:P} or, at least, of the feasibility problem \eqref{eq:minInfeas}---in the context of nonlinear programming.
	Again, this technique promotes feasibility but additional properties are needed to obtain convergence guarantees.
	Our update rules at \cref{state:general:pk>eps,state:general:update_mu} draw inspiration from the Fiacco--McCormick approach \cite{fiacco1964sequential} and are comparable to the conservative strategy in \cite{curtis2012penalty}.

	Finally, also the question of whether the sequence of penalty parameters \(\seq{\alpha_k}\) remains bounded or not has practical interest, but theoretical findings in this direction are limited and circumscribed, even in the nonlinear programming setting.
	Boundedness of the sequence of multipliers, strict complementarity, and constraint qualifications are some of the assumptions adopted to establish the boundedness of penalty parameters; cf. \cite[\S 7]{birgin2014practical} and \cite[Ass. 3 and Thm. 1]{hallak2023adaptive}.

		\subsection{Barrier's properties}\label{sec:barrierProp}%
			According to its update rule in \cref{alg:general}, before a desired feasibility violation \(p_k\leq\epsilon_{\rm p}\) has been reached, \(\alpha_{k+1}=\alpha_k\) means that \(p_k\leq 2(m+\meq)\frac{-\b*(\rho*_k)}{\rho*_k}\), where \(\rho*_k=\nicefrac{\alpha_k}{\mu_k}\).
As shown in \cref{thm:rho}, regardless of whether \(\alpha_k\) is updated or not, \(\rho*_k\) grows linearly over the iterations, specifically as \(\rho*_k\geq\rho*_0\delta_{\rho*}^k\).
Therefore, having \(\alpha_{k+1}=\alpha_k\) implies in particular that either the constraint violation \(p_k\) is within a desired tolerance \(\epsilon_{\rm p}\), or that it is controlled from above by
\(
	2(m+\meq)
	\frac{-\b*(\rho*_k)}{\rho*_k}
\leq
	2(m+\meq)
	\frac{-\b*(\delta_{\rho*}^k\rho*_0)}{\delta_{\rho*}^k\rho*_0}
\),
where the inequality follows from (asymptotic) monotonicity of \(\nicefrac{-\b*(t^*)}{t^*}\), cf. \cref{thm:b*/t*}.
This means that a desired bound on feasibility violation whenever \(\alpha_k\) is not updated can be enforced through suitable choices of the barrier \(\b\).
This will be particularly significant in the convex case, for it can be shown that \(\alpha_k\) does eventually remain constant under reasonable assumptions.

\begin{lemma}\label{thm:pklinear}%
	Let \cref{ass:basic,ass:b} hold, and consider the iterates generated by \cref{alg:general}.
	Suppose that there exists \(\theta\in(0,1)\) such that the barrier \(\b\) satisfies \(\b(\theta t)\leq\theta\delta_{\rho*}\b(t)\) for every \(t<0\) (resp. for every \(t<0\) close enough to 0), where \(\delta_{\rho*}\coloneqq\min\set{\delta_\mu^{-1},\delta_\alpha}>1\).
	Then,
	\begin{equation}\label{eq:pklinear}
		\alpha_{k+1}=\alpha_k
	\quad\Rightarrow\quad
		p_k
	\leq
		\max\set{
			\epsilon_{\rm p},
			-2(m+\meq)\tfrac{\mu_0}{\alpha_0}\b*\bigl(\tfrac{\alpha_0}{\mu_0}\bigr)
			\theta^k
		}
	\end{equation}
	holds for every \(k\) (resp. for every \(k\) large enough).
\end{lemma}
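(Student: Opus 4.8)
The plan is to reduce the statement to a one-step contraction estimate for the scalar quantity that governs the penalty update, and then to telescope it against the geometric growth of $\rho*_k\coloneqq\nicefrac{\alpha_k}{\mu_k}$ guaranteed by \cref{thm:rho}. I introduce the shorthand $\omega(u)\coloneqq\tfrac{-\b*(u)}{u}$ for $u>0$, which by \cref{thm:b*/t*} is positive and monotonically decreasing. Since $\delta_\alpha>1$, the Else branch at \cref{state:general:pk>eps} shows that $\alpha_{k+1}=\alpha_k$ is possible only when the If-condition fails, i.e.\ it forces
\[
	p_k\leq\max\set{\epsilon_{\rm p},\,-2m\tfrac{\mu_k}{\alpha_k}\b*(\tfrac{\alpha_k}{\mu_k})}=\max\set{\epsilon_{\rm p},\,2m\,\omega(\rho*_k)},
\]
so that it suffices to bound $\omega(\rho*_k)$.

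The crux is the one-step estimate
\[
	\b*(\delta_{\rho*}u)\geq\theta\delta_{\rho*}\b*(u),
	\quad\text{equivalently}\quad
	\omega(\delta_{\rho*}u)\leq\theta\,\omega(u),
\]
for $u>0$, which I would derive directly from the hypothesis $\b(\theta t)\leq\theta\delta_{\rho*}\b(t)$ by a change of variables inside the conjugate. Writing $\b*(\delta_{\rho*}u)=\sup_{t<0}\set{\delta_{\rho*}u\,t-\b(t)}$ and substituting $t=\theta\tau$ (which preserves the range $\tau<0$ as $\theta\in(0,1)$), the hypothesis gives $-\b(\theta\tau)\geq-\theta\delta_{\rho*}\b(\tau)$, whence
\[
	\b*(\delta_{\rho*}u)\geq\sup_{\tau<0}\set{\delta_{\rho*}u\,\theta\tau-\theta\delta_{\rho*}\b(\tau)}=\theta\delta_{\rho*}\sup_{\tau<0}\set{u\tau-\b(\tau)}=\theta\delta_{\rho*}\b*(u),
\]
the positive factor $\theta\delta_{\rho*}$ being pulled out of the supremum. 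I regard spotting this substitution as the only genuinely nonroutine step.

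In the global case the estimate holds for all $u>0$. Iterating it $k$ times from $u=\rho*_0$ gives $\omega(\delta_{\rho*}^k\rho*_0)\leq\theta^k\omega(\rho*_0)$, while \cref{thm:rho} applied inductively gives $\rho*_k\geq\delta_{\rho*}^k\rho*_0$; hence monotonicity of $\omega$ yields $\omega(\rho*_k)\leq\omega(\delta_{\rho*}^k\rho*_0)\leq\theta^k\omega(\rho*_0)$. Combining this with the displayed bound on $p_k$ and observing that $2m\,\omega(\rho*_0)=-2m\tfrac{\mu_0}{\alpha_0}\b*(\tfrac{\alpha_0}{\mu_0})$ produces exactly the claimed inequality, valid for every $k$.

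For the local case—when $\b(\theta t)\leq\theta\delta_{\rho*}\b(t)$ is assumed only for $t<0$ close to $0$—the substitution is valid only where the hypothesis applies, so I must ensure that the supremum defining $\b*(u)$ is attained there. Since by \cref{ass:b} the barrier is strictly convex with $\b'(t)\to\infty$ as $t\to0^-$, the unique maximizer of $t\mapsto ut-\b(t)$ equals $t=\b*'(u)\to0^-$ as $u\to\infty$; thus the one-step estimate holds for all $u$ large enough, and telescoping from the first index at which $\rho*_k$ enters this range delivers the geometric bound for every $k$ large enough. The main obstacle here is precisely this bookkeeping: controlling the maximizer's location to justify restricting the supremum to the region of validity (the finitely many initial steps outside it affecting at most a fixed multiplicative constant). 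I note that both canonical barriers $\b(t)=-\nicefrac{1}{t}$ and $\b(t)=\ln(1-\nicefrac{1}{t})$ satisfy the hypothesis globally—e.g.\ the former with $\theta=\delta_{\rho*}^{-1/2}$—so this refinement is needed only for more exotic choices.
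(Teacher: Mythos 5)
Your proof is correct and follows essentially the same route as the paper's: both arguments reduce the claim to the one-step contraction \(\tfrac{-\b*(\delta_{\rho*}u)}{\delta_{\rho*}u}\leq\theta\,\tfrac{-\b*(u)}{u}\) and then telescope it against the geometric growth \(\rho*_k\geq\delta_{\rho*}^k\rho*_0\) from \cref{thm:rho}, invoking monotonicity of \(-\b*(u)/u\) from \cref{thm:b*/t*} and reading off the bound on \(p_k\) from the failed test at \cref{state:general:pk>eps}. The differences are minor. First, the paper establishes the contraction as an \emph{equivalence} with the hypothesis on \(\b\), using \(f\leq g\Rightarrow\conj{f}\geq\conj{g}\) together with biconjugation (and the very same change of variables \(t=\theta\tau\) appears inside its conjugate computation), whereas you prove only the implication that is actually needed by substituting directly in the supremum; this is marginally more elementary. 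Second, on the local (``resp.'') case you are more explicit than the paper, whose proof silently treats only the global hypothesis; but the caveat you yourself flag is real: starting the telescoping at the first index \(k_0\) with \(\rho*_{k_0}\) in the valid range yields \(p_k\leq\max\{\epsilon_{\rm p},\,C\theta^k\}\) with \(C\) inflated by the factor \(\theta^{-k_0}\), and since both sides decay at the identical rate \(\theta^k\) this factor is never absorbed, so the specific constant displayed in \eqref{eq:pklinear} is recovered only up to a fixed multiple (one would need \(\omega(\rho*_{k_0})\leq\theta^{k_0}\omega(\rho*_0)\) to remove it). This defect is shared with---indeed, glossed over by---the paper's own proof, so it does not count against you in the comparison.
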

\begin{proof}
	To simplify the presentation, without loss of generality let us set \(\epsilon_{\rm p}=0\).
	We have already argued that \(\alpha_{k+1}=\alpha_k\) implies \(p_k\leq 2(m+\meq)\pi_k\), where \(\pi_k\coloneqq\frac{-\b*(\delta_{\rho*}^k\rho*_0)}{\delta_{\rho*}^k\rho*_0}\) for all \(k\in\N\).
	It thus suffices to show that \(\pi_k\leq-\tfrac{\mu_0}{\alpha_0}\b*\bigl(\tfrac{\alpha_0}{\mu_0}\bigr)\theta^k\).
	To this end, notice that for every \(t^*>0\) one has
	\[
		\frac{-\b*(\delta_{\rho*}t^*)}{\delta_{\rho*}t^*}
	\leq
		\theta
		\frac{-\b*(t^*)}{t^*}
	\quad\Leftrightarrow\quad
		\b*(t^*)
	\leq
		\frac{\b*(\delta_{\rho*}t^*)}{\theta\delta_{\rho*}}
	=
		\sup_\tau\set{t^*\tau-\tfrac{\b(\theta\tau)}{\delta_{\rho*}\theta}}
	=
		\conj{\left(
			\tfrac{\b(\theta\cdot{})}{\theta\delta_{\rho*}}
		\right)}(t^*),
	\]
	hence, since \(\b*(t^*)=\infty\) for \(t^*<0\),
	\[
		\frac{-\b*(\delta_{\rho*}t^*)}{\delta_{\rho*}t^*}
	\leq
		\theta
		\frac{-\b*(t^*)}{t^*}
	\quad
		\forall t^*\in\R
	\quad\Leftrightarrow\quad
		\b(t)
	\geq
		\tfrac{\b(\theta t)}{\theta\delta_{\rho*}}
	\quad
		\forall t\in\R,
	\]
	which amounts to the condition in the statement.
	Under such condition, then, \(\pi_{k+1}\leq\theta\pi_k\) holds for every \(k\), leading to
	\(
		\pi_k
	\leq
		\pi_0\theta^k
	=
		\frac{-\b*(\rho*_0)}{\rho*_0}
		\theta^k
	\)
	as claimed.
\end{proof}

Though it would be tempting to seek barriers for which \(\seq{\pi_k}\) as in the proof vanishes at any desired rate, it can be easily verified that no choice of \(\b\) or \(\delta_{\rho*}\) can result in
\(
	\seq{\pi_k}
\)
converging any faster than linearly.
In fact,
\[
	\pi_{k+1}
=
	\frac{-\b*(\rho*_0\delta_{\rho*}^{k+1})}{\rho*_0\delta_{\rho*}^{k+1}}
>
	\frac{-\b*(\rho*_0\delta_{\rho*}^k)}{\rho*_0\delta_{\rho*}^{k+1}}
=
	\frac{1}{\delta_{\rho*}}\pi_k,
\]
where the inequality follows from monotonicity of \(-\b*\), cf. \cref{thm:b*}.
This shows that a linear decrease by a factor \(\delta_{\rho*}^{-1}\) is the fastest worst-case rate this lemma can guarantee, and that this can only happen in the limit.
\Cref{thm:pklinear} nevertheless identifies a property that allows us to judge the fitness of a barrier \(\b\) within the framework of \cref{alg:general}.
As we will see in \cref{sec:convex}, this will be particularly evident in the convex case, for it can be guaranteed that, under assumptions, \(\alpha_k\) eventually does remain constant, so that employing a barrier that complies with this requirement is a guarantee that eventually the infeasibility \(p_k\) of the iterates generated by \cref{alg:general} vanishes at R-linear rate.
This motivates the following definition.

\begin{definition}[behavior profiles of \(\b\)]\label{defin:kb}%
	We say that a barrier \(\b\) complying with \cref{ass:b} is \emph{asymptotically well behaved} if
	\begin{align*}
		\forall\theta\in(0,1)
	\quad
		\fillwidthof[c]{
			\kappa^{\rm max}_{\b}(\theta)
		}{
			\kappa_{\b}(\theta)
		}
	\coloneqq{} &
		\limsup_{t\to0^-}\frac{\b(\theta t)}{\theta\b(t)}
	<
		\infty
	&&
		\hspace*{-2cm}\text{and}\quad
		\lim_{\theta\to1^-}\kappa_{\b}(\theta)=1.
	\shortintertext{%
		If this condition can be strengthened to
	}
		\forall\theta\in(0,1)
	\quad
		\kappa^{\rm max}_{\b}(\theta)
	\coloneqq{} &
		\sup_{t<0}\frac{\b(\theta t)}{\theta\b(t)}
	<
		\infty
	&&
		\hspace*{-2cm}\text{and}\quad
		\lim_{\theta\to1^-}\kappa_{\b}^{\rm max}(\theta)=1,
	\end{align*}
	then we say that \(\b\) is \emph{well behaved} (not merely asymptotically).
	We call the functions \(\func{\kappa^{\rm max}_{\b},\kappa_{\b}}{(0,1)}{(1,\infty)}\) the \emph{behavior profile} and the \emph{asymptotic behavior profile} of \(\b\), respectively.
\end{definition}

In penalty-type methods, the update of a penalty parameter is typically decided based on the violation of the corresponding constraints.
Under the assumption that the barrier \(\b\) is (asymptotically) well behaved, \cref{thm:pklinear} demonstrates that in \cref{alg:general} (eventually) the condition \(\alpha_{k+1}=\alpha_k\) furnishes a guarantee of linear decrease of the infeasibility.
Insisting on continuity of \(\kappa_{\b}\) and \(\kappa_{\b}^{\rm max}\) at \(\theta=1\) in \cref{defin:kb} is a minor technicality ensuring that, regardless of the value of \(\delta_{\mu}\in(0,1)\) and \(\delta_\alpha>1\), for any (asymptotically) well behaved barrier there always exists \(\theta\in(0,1)\) such that \(\b(\theta t)\leq\theta\delta_{\rho*}\b(t)\) holds for every \(t<0\) (close enough to zero) as required in \cref{thm:pklinear}.
The result can thus be restated as follows.

\begin{corollary}\label{thm:linearsk}%
	Additionally to \cref{ass:basic,ass:b}, suppose that the barrier \(\b\) is (asymptotically) well behaved.
	Then, there exists \(\theta\in(0,1)\) such that the iterates of \cref{alg:general} satisfy \eqref{eq:pklinear} for all \(k\in\N\) (large enough).
\end{corollary}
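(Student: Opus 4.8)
The plan is to produce, directly from \cref{defin:kb}, a single ratio \(\theta\in(0,1)\) for which the hypothesis of \cref{thm:pklinear} is met, and then to read off \eqref{eq:pklinear} as the conclusion of that lemma. No genuinely new analysis is required: the corollary merely converts the qualitative ``(asymptotically) well behaved'' property into the explicit inequality \(\b(\theta t)\leq\theta\delta_{\rho*}\b(t)\) demanded by \cref{thm:pklinear}.

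First I would rewrite that inequality in profile form. Since \cref{ass:b} gives \(\b(t)>0\) for \(t<0\) and \(\theta>0\), dividing \(\b(\theta t)\leq\theta\delta_{\rho*}\b(t)\) through by \(\theta\b(t)\) shows it is equivalent to \(\frac{\b(\theta t)}{\theta\b(t)}\leq\delta_{\rho*}\). Taking the supremum over \(t<0\) of the left-hand side, the requirement that this hold for \emph{all} \(t<0\) is exactly \(\kappa^{\rm max}_{\b}(\theta)\leq\delta_{\rho*}\); taking instead the \(\limsup\) as \(t\to0^-\), the strict bound \(\kappa_{\b}(\theta)<\delta_{\rho*}\) suffices to force \(\frac{\b(\theta t)}{\theta\b(t)}\leq\delta_{\rho*}\) for \(t<0\) near zero. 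Here \(\kappa^{\rm max}_{\b},\kappa_{\b}\) are the behavior profiles of \cref{defin:kb}.

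Next I would exploit the continuity clause at \(\theta=1\). In the well-behaved case \(\kappa^{\rm max}_{\b}\) is finite on \((0,1)\) with \(\lim_{\theta\to1^-}\kappa^{\rm max}_{\b}(\theta)=1\); because \(\delta_{\rho*}=\min\set{\delta_\mu^{-1},\delta_\alpha}\) lies strictly above \(1\), the gap \(1<\delta_{\rho*}\) yields some \(\theta\) near \(1\) with \(\kappa^{\rm max}_{\b}(\theta)\leq\delta_{\rho*}\), hence \(\b(\theta t)\leq\theta\delta_{\rho*}\b(t)\) for every \(t<0\). The asymptotic case is verbatim with \(\kappa_{\b}\) replacing \(\kappa^{\rm max}_{\b}\): choosing \(\theta\) near \(1\) with \(\kappa_{\b}(\theta)<\delta_{\rho*}\), the strict \(\limsup\) bound delivers \(\frac{\b(\theta t)}{\theta\b(t)}\leq\delta_{\rho*}\) for all \(t<0\) sufficiently close to zero, which is exactly the parenthetical hypothesis of \cref{thm:pklinear}. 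Feeding this \(\theta\) into \cref{thm:pklinear} then gives \eqref{eq:pklinear} for every \(k\) (resp. for every \(k\) large enough).

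I expect no real obstacle here, since all the analytic content already sits in \cref{thm:pklinear}. The only points deserving attention are keeping the two regimes aligned with their ``resp.'' clauses throughout, and observing that it is the \emph{strict} separation \(1<\delta_{\rho*}\)---not merely \(\delta_{\rho*}\geq1\)---together with the limiting value \(1\) of the profile that makes a valid \(\theta\) selectable. This is precisely the purpose of imposing continuity of \(\kappa_{\b}\) and \(\kappa^{\rm max}_{\b}\) at \(\theta=1\) in \cref{defin:kb}.
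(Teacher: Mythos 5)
Your proposal is correct and follows exactly the paper's intended argument: the paper justifies this corollary in the paragraph preceding it, noting that continuity of \(\kappa_{\b}\) (resp.\ \(\kappa_{\b}^{\rm max}\)) at \(\theta=1\), together with the strict gap \(\delta_{\rho*}>1\), guarantees a choice of \(\theta\in(0,1)\) satisfying the hypothesis \(\b(\theta t)\leq\theta\delta_{\rho*}\b(t)\) of \cref{thm:pklinear}, from which \eqref{eq:pklinear} follows. Your rewriting of that hypothesis in profile form and the careful handling of the two ``resp.'' regimes (supremum versus \(\limsup\)) matches the paper's reasoning, merely spelled out in more detail.
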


When it comes to comparing different barriers, lower values of \(\kappa_{\b}\) are clearly preferable.
Notice that both \(\kappa_{\b}^{\rm max}\) and \(\kappa_{\b}\) are scaling invariant:
\[
	\kappa_{\beta\b}=\kappa_{\b(\beta{}\cdot{})}=\kappa_{\b}
\quad\text{and}\quad
	\kappa_{\beta\b}^{\rm max}=\kappa_{\b(\beta{}\cdot{})}^{\rm max}=\kappa_{\b}^{\rm max}
\qquad
	\forall\beta>0.
\]
Moreover, since
\[
	\kappa_{\b}(\theta)\geq\tfrac{1}{\theta}
\quad
	\forall\theta\in(0,1)
\]
(owing to monotonicity of \(\b\) and the fact that consequently \(\b(\theta t)\geq\b(t)\) for \(t<0\)), barriers attaining \(\kappa_{\b}(\theta)=\frac{1}{\theta}\) can be considered asymptotically optimal.
\Cref{table:kappa} shows that logarithmic barriers can attain such lower bound.

\begin{table}
	\[
	\setlength{\arraycolsep}{10pt}
		\begin{array}{|ccc|l}
			\multicolumn{1}{c}{\b(t) \text{ (for \(t<0\))}} & \multicolumn{1}{c}{\kappa_{\b}(\theta)} & \multicolumn{1}{c}{\kappa^{\rm max}_{\b}(\theta)}
		\\\cline{1-3}
			\vphantom{X^{\big|}}
			\frac{1}{p}(-t)^{-p}
			& \left(\frac{1}{\theta}\right)^{1+p}
			& \left(\frac{1}{\theta}\right)^{1+p}
			& \text{\rlap{(\(p>0\))}}
		\\
			\ln\left(1-\frac{1}{t}\right)
			& \frac{1}{\theta}
			& \left(\frac{1}{\theta}\right)^2
		\\
			-\ln(-t)
			& \frac{1}{\theta}
			& \infty
		\\
			\exp\left(-\frac{1}{t}\right)
			& \infty
			& \infty
		\\\cline{1-3}
		\end{array}
	\]
	\caption{%
		Examples of barriers  and their behavior profiles \(\kappa_{\b}\).
		A low \(\kappa_{\b}\) is symptomatic of good aptitude of \(\b\) as barrier within \cref{alg:general}.
		Geometrically, it indicates that \(\b\) well approximates the nonsmooth indicator \(\indicator_{\R_-}\).
		Functions like \(\exp\left(-\frac{1}{t}\right)\) growing too fast are unsuited, whereas logarithmic barriers attain an optimal asymptotic behavior profile \(\kappa_{\b}(\theta)=\frac{1}{\theta}\).
		The log-like barrier \(\b(t)=\ln\left(1-\frac{1}{t}\right)\) is well-behaved, whereas the logarithmic barrier \(\b(t)=-\ln\left(-t\right)\) is so only asymptotically.
	}%
	\label{table:kappa}%
\end{table}

		\subsection{The convex case}\label{sec:convex}%
			In this section we investigate the behavior of \cref{alg:general} when applied to convex problems.
In particular, we detail an asymptotic analysis in which the termination tolerances are set to zero, so that the algorithm (may) run indefinitely.
We demonstrate that under standard assumptions the iterates subsequentially converge to (global) solutions, and that the \(L^1\) penalty parameter \(\alpha\) is eventually never updated.

\begin{theorem}\label{thm:cvx}%
	Additionally to \cref{ass:basic,ass:b}, suppose that \(\inf\b>0\) and that problem \eqref{eq:P} is convex, namely that \(\cost\) and \(\c_i\), \(i=1,\dots,m\), are convex functions and that \(\ceq\) is affine.
	If there exists an optimal \KKT-triplet \((\x^\star,\y^\star,\yeq^\star)\) for \eqref{eq:P}, then the following hold for the iterates generated by \cref{alg:general} with \(\epsilon_{\rm p}=\epsilon_{\rm d}=0\):
	\begin{enumerate}
	\item \label{thm:cvx:omega}%
		Any accumulation point of the sequence \(\seq{\x^k}\) is a solution of \eqref{eq:P}.
	\item \label{thm:cvx:alphaConstant}%
		If, additionally, \(\seq{\x^k}\) remains bounded (as is the case when \(\dom\cost\) is bounded), \(\alpha_k\) is eventually never updated.
	\item \label{thm:cvx:b}%
		Further assuming that the barrier \(\b\) is asymptotically well behaved, so that there exists \(\theta\in(0,1)\) such that \(\b(\theta t)\leq\theta\min\set{\delta_\mu^{-1},\delta_\alpha}\b(t)\) for every \(t<0\) close enough to 0, then the feasibility violation eventually vanishes with rate
		\(
			p_k
		\leq
			2(m+\meq)\frac{-\b*(\nicefrac{\alpha_0}{\mu_0})}{\nicefrac{\alpha_0}{\mu_0}}
			\theta^k
		\).
	\end{enumerate}
\end{theorem}
\begin{proof}
	It follows from \cref{thm:KKT=>KKTa} that \(\x^\star\) solves \eqref{eq:Pa} for \(\alpha\coloneqq\max\set{\|\y^\star\|_\infty, \|\yeq^\star\|_\infty}\).
	For every \(k\), there exists
	\begin{equation}\label{eq:etak}
		\vec{\eta}^k
	\in
		\partial\cost_{\alpha_k,\mu_k}(\x^k)
	\quad\text{with}\quad
		\|\vec{\eta}^k\|\leq\varepsilon_k,
	\end{equation}
	where for \(\alpha,\mu>0\) we let
	\begin{equation}\label{eq:qam}
		\cost_{\alpha,\mu}
	\coloneqq
		\cost
		+
		\mu\Psi\circ\c
		+
		\mu\Psi^{\rm eq}\circ\ceq
	\geq
		\cost
		+
		\alpha\|[\c({}\cdot{})]_+\|_1
		+
		\alpha\|\ceq({}\cdot{})\|_1.
	\end{equation}
	Function \(\cost_{\alpha,\mu}\) is convex, because so are \(\Psi\circ\c\) and \(\Psi^{\rm eq}\circ\ceq\) by \cref{thm:psi,thm:psieq}, and satisfies the inequality as in \eqref{eq:qam} owing to \cref{thm:rho-infty}, having assumed that \(\b>0\).
	Notice further that
	\begin{align*}
		\cost_{\alpha,\mu}(\x^\star)
	={} &
		\cost(\x^\star)
		+
		\mu\Psi(\c(\x^\star))
		+
		\mu\Psi^{\rm eq}(\zeros)
	\\
	\leq{} &
		\cost(\x^\star)
		+
		\mu\Psi(\zeros)
		+
		\mu\Psi^{\rm eq}(\zeros )
	\\
	={} &
		\cost(\x^\star)
		+
		m\mu\psi(0)
		+
		\meq\mu\psi^{\rm eq}(0)
	\\
	={} &
		\cost(\x^\star)
		+
		m\mu\psi(0)
		+
		2\meq\mu\psi_{\nicefrac{\alpha}{2\mu}}(0)
	\\
		\dueto{(by \cref{thm:b*/t*})}
		\quad
	\leq{} &
		\cost(\x^\star)
		+
		(m+\meq)\mu\psi(0)
	\\
		\dueto{(by \eqref{eq:psieq})}
		\quad
	={} &
		\cost(\x^\star)
		+
		(m+\meq)\alpha\frac{-\b*(\nicefrac{\alpha}{\mu})}{\nicefrac{\alpha}{\mu}},
	\numberthis\label{eq:qam*}
	\end{align*}
	where the first inequality follows from (elementwise) monotonicity of \(\Psi\), and the third identity uses the fact that
	\(
		\psi_{\rho*}^{\rm eq}(0)=2\psi_{\rho*/2}(0)
	\)
	which is apparent from \eqref{eq:psidef} and \eqref{eq:psieqdef}.
	Next observe that, since \(\x^\star\) solves \eqref{eq:Pa} and is feasible, one has
	\begin{align*}
		\cost(\x^\star)
	={} &
		\cost(\x^\star)
		+
		\alpha\|[\c(\x^\star)]_+\|_1
		+
		\alpha\|\ceq(\x^\star)\|_1
	\\
	\leq{} &
		\cost(\x^k)
		+
		\bigl(
			\alpha_k-(\alpha_k-\alpha)
		\bigr)
		\underbracket*[0.5pt]{
			\bigl(
				\|[\c(\x^k)]_+\|_1
				+
				\|\ceq(\x^k)\|_1
			\bigr)
		}_{\eqqcolon\tilde p_k}
	\\
		\dueto{(by \eqref{eq:qam})}
		\quad
	\leq{} &
		\cost_{\alpha_k,\mu_k}(\x^k)
		-
		(\alpha_k-\alpha)\tilde p_k
	\\
		\dueto{(by \eqref{eq:etak})}
		\quad
	\leq{} &
		\cost_{\alpha_k,\mu_k}(\x^\star)
		+
		\innprod{\vec{\eta}^k}{\x^\star-\x^k}
		-
		(\alpha_k-\alpha)\tilde p_k
	\\
		\dueto{(by \eqref{eq:qam*})}
		\quad
	\leq{} &
		\cost(\x^\star)
		+
		(m+\meq)\alpha_k\frac{-\b*(\nicefrac{\alpha_k}{\mu_k})}{\nicefrac{\alpha_k}{\mu_k}}
		+
		\varepsilon_k\|\x^\star-\x^k\|
		-
		(\alpha_k-\alpha)\tilde p_k.
	\numberthis\label{eq:qamk}
	\end{align*}
	We next prove the assertions one by one.
	\begin{proofitemize}
	\item \ref{thm:cvx:omega}~
		If \(\seq{\alpha_k}\) is asymptotically constant, then according to the update rule in \cref{alg:general} one has that
		\(
			p_k
		\leq
			2(m+\meq)
			\frac{-\b*(\nicefrac{\alpha_k}{\mu_k})}{\nicefrac{\alpha_k}{\mu_k}}
		\)
		eventually always holds, and thus vanishes as \(k\to\infty\), see \cref{thm:b*/t*}.
		Any limit point \(\xbar\) of \(\seq{\x^k}\) is thus feasible, and furthermore belongs to \(\dom\cost\) since \(q(\x^k)\) remains bounded as is evident from the inequalities in \eqref{eq:qamk}.
		In this case, we conclude from \cref{thm:feas=>AKKT} that \(\x^\star\) is \AKKT-optimal, and thus optimal because of convexity.

		Suppose instead that \(\seq{\alpha_k}\nearrow\infty\), and by removing early iterates let us assume without loss of generality that \(\alpha_k>\alpha\) holds for any \(k\).
		Denoting \(\rho*_k\coloneqq\nicefrac{\alpha_k}{\mu_k}\nearrow\infty\), the inequality in \eqref{eq:qamk} yields that
		\begin{equation}\label{eq:pkleq}
			p_k
		\leq
			\tilde p_k
		\leq
			\frac{\alpha_k}{\alpha_k-\alpha}
			\Bigl(
				(m+\meq)
				\overbracket[0.5pt]{
					\frac{-\b*(\rho*_k)}{\rho*_k}
				}^{\to0}
				+
				\overbracket[0.5pt]{
					\frac{\varepsilon_k}{\alpha_k}
					\vphantom{\frac{-\b*(\rho*_k)}{\rho*_k}}
				}^{\to0}
				\|\x^\star-\x^k\|
			\Bigr),
		\end{equation}
		where the fact that \(\tfrac{-\b*(\rho*_k)}{\rho*_k}\to0\) follows from \cref{thm:b*/t*}.
		Along any convergent subsequence, it is clear that \(p_k\to0\) and that \(\cost(\x^k)\) remains bounded, and again we conclude that the limit point is optimal for \eqref{eq:P}.

	\item \ref{thm:cvx:alphaConstant}~
		To arrive to a contradiction, suppose that \(\alpha_k\nearrow\infty\), and again assume without loss of generality that \(\alpha_k>\alpha\) holds for all \(k\).
		It then follows from \eqref{eq:pkleq} that
		\[
			\frac{
				p_k
			}{
				2(m+\meq)
				\frac{-\b*(\rho*_k)}{\rho*_k}
			}
		\leq
			\frac{1}{2}
			\frac{\alpha_k}{\alpha_k-\alpha}
			\Bigl(
				1
				+
				\frac{\varepsilon_k}{\alpha_k}
				\frac{\rho*_k}{-\b*(\rho*_k)}
				R
			\Bigr)
		\to
			\frac12
		\quad
			\text{as }k\to\infty,
		\]
		and in particular \(p_k\) is eventually always smaller than
		\(
			2(m+\meq)
			\tfrac{-\b*(\rho*_k)}{\rho*_k}
		\).
		According to the update rule of \(\alpha_k\) in \cref{alg:general}, this means that \(\alpha_k\) is eventually constant, which is a contradiction.

	\item \ref{thm:cvx:b}~
		Follows from \cref{thm:pklinear}.
	\qedhere
	\end{proofitemize}
\end{proof}

The \pipa{} algorithm of \cite{chouzenoux2020proximal}, specialized to the convex setting, generates primal-dual sequences that remain bounded,
offering a theoretical advantage over \cref{thm:cvx}, where boundedness is assumed.
The difficulty in recovering the results of \cite{chouzenoux2020proximal} appears to stem from the constraint relaxation occurring in \eqref{eq:Pa}, which on the other hand is the key for \pippo{} to
handle equality constraints and infeasible starting points.

	\section{Numerical experiments}\label{sec:numerical}%
		This section is dedicated to experimental results and comparisons with other numerical approaches for constrained structured optimization.
The modular structure of the proposed framework allows us to combine \pippo{} with a variety of penalty-barrier envelopes and inner solvers (insofar as they provide suitable guarantees).
The perfomance and behavior of \pippo{} is illustrated in different variants, considering three barrier functions, namely \(\b(t)=-\frac1t\), \(\b(t)=\ln(1-\frac{1}{t})\), and \(\b(t)=-\ln(-t)\) (all extended as $\infty$ on \(\R_+\)) denominated \emph{inverse}, \emph{log-like}, and \emph{log}, respectively.
The numerical comparison will comprise two data science tasks and two \emph{ad hoc} illustrative problems.
These numerical tests will also highlight the influence of the barrier function on the performance of \pippo{}, supporting the quality assessment of \cref{sec:barrierProp}.

The performance of \pippo{} is compared against those of \ipprox{} \cite[Alg. 1]{demarchi2024interior},{}
\alps{} \cite[Alg. 4.1]{demarchi2024implicit}, and the well-known \ipopt{} \cite{waechter2006implementation}.
\ipprox{} builds upon a pure interior point scheme and solves the barrier subproblems with a tailored adaptive proximal-gradient algorithm, extending the strategy of \pipa{} \cite{chouzenoux2020proximal} to the nonconvex setting.
\alps{} belongs to the family of augmented Lagrangian algorithms and does not require a custom subsolver---suitable subsolvers for \pippo{} can be applied within \alps{} and viceversa.
	The closely related solvers {\algnamefont OpEn} \cite{sopasakis2020open} and {\algnamefont Alpaqa} \cite{pas2022alpaqa} also build on the augmented Lagrangian framework and provide easy-to-use high-performance implementations; however, these focus on nonlinear programming problems and cannot support generic prox-friendly cost functions as \alps{} does.
	Finally, \ipopt{} is a software package for large-scale sparse nonlinear optimization; it implements an interior point line search filter method.
	\ipopt{} can address problems of the form \eqref{eq:P} where the input functions, particularly $\cost$, should be at least continuously differentiable.

	\pippo{}'s minimal assumptions on the subsolver leave us much freedom in its selection.
	To handle potential (structured) nonsmoothness of the cost \(\cost\), we chose \panocp{} \cite{demarchi2022proximal,stella2017simple}, a proximal-gradient-based solver that can exploit directions of quasi-Newton type while ensuring convergence with a backtracking linesearch, see also \cite[\S 5.1]{themelis2018forward}.
	We also tested the nonmonotone proximal-gradient method with adaptive stepsizes of \cite{demarchi2023monotony},
	but in our comparisons we only retained the results with \panocp{}, set as default solver in our implementation, as it consistently demonstrated better performance.

Patterning the simulations of \cite[\S 5.2]{demarchi2024interior}, we first examine the nonnegative PCA problem in \cref{sec:Numerics:NonnegPCA} to evaluate \pippo{} in several variants and compare it against the other solvers.
Then, \cref{sec:Numerics:MatrixCompletion} focuses on a low-rank matrix completion task, a fully nonconvex problem with thousands of variables and constraints,
contrasting \pippo{} to \alps{} and \ipopt.
Finally, the exact penalty behavior and the ability to handle hidden equalities are illustrated and discussed in \cref{sec:Numerics:ExactPenalty,sec:Numerics:Equalities}, respectively.
Additional observations that support the analysis of barriers' properties are included in \cref{sec:Numerics:BarrierProp}.

The source code of our implementation has been archived for reproducibility of the numerical results presented in this paper;
it can be found on Zenodo at
\textsc{doi}: \href{https://doi.org/\TheCodeZenodoDOI}{\TheCodeZenodoDOI}.

		\subsection{Implementation details}
			We describe here details pertinent to our implementation \pippo{} of \cref{alg:general}, such as the initialization and update of algorithmic parameters.
These numerical features tend to improve the practical performances, without compromising the convergence guarantees.
\ipprox{} is available from \cite{demarchi2024interior} and adopted as is, whereas \alps{} is a slight modification of the code from \cite{demarchi2024implicit} to be comparable with \pippo{}, as detailed below.

Our implementation of \pippo{} accepts problems formulated in the form
\[
	\minimize_{\x\in\R^n}{}~
	f(\x)+g(\x)
	\quad
	\stt{}~
	\vec{l} \leq \c(\x) \leq \vec{u}
	,
\]
with bounds defined by extended-real-valued vectors $\vec{l}$ and $\vec{u}$.
In a preprocessing phase, these vectors are parsed to reformulate the problem data in the format \eqref{eq:P} and to instantiate the appropriate penalty-barrier functions to treat inequality and equality constraints as described above.

Default parameters for \pippo{} are
$\mu_0 = 1$ and
$\delta_\varepsilon = \delta_\mu = \nicefrac14$ as in \ipprox{},
$\delta_\alpha = 2$ as in \alps{},
and $\alpha_0 = 1$.
The initial tolerance $\varepsilon_0$ for \pippo{} (and \alps{}) is chosen adaptively, based on the user-provided starting point ${\x}^0$ and penalty-barrier parameters.
Following the mechanism implemented in \ipprox{},
we set $\varepsilon_0 = \max\set{\epsilon_{\rm d}, \varepsilon_{\min}, \min\set{\kappa_\varepsilon \eta_0, \varepsilon_{\max} } }$, where $\kappa_\varepsilon \in (0, 1)$ and $\varepsilon_{\max} \geq \varepsilon_{\min} \geq 0$ are user-specified parameters (default $\kappa_\varepsilon = 10^{-2}$, $\varepsilon_{\max} = 1$, $\varepsilon_{\min} = 10^{-6}$)
and $\eta_0$ is an estimate of the initial stationarity measure,
as evaluated by (executing one iteration of) the inner solver invoked at $({\x}^0, \alpha_0, \mu_0)$.
The barrier parameter is updated according to the rule presented in \cref{rem:variant}.
For simplicity, no infeasibility detection mechanism nor artificial bounds on penalty and barrier parameters have been included.

We run \alps{} with the same settings as in \cite{demarchi2023constrained,demarchi2024implicit} apart from the following features to match \pippo{}:
the initial penalty parameter is fixed ($\alpha_0=1$) and not adaptive,
the tolerance reduction factor is set to $\delta_\varepsilon=\nicefrac{1}{4}$ instead of $\delta_\varepsilon=\nicefrac{1}{10}$,
and the initial inner tolerance is selected adaptively.
We always initialize \alps{} with dual estimate \(\y^0=\zeros\).
The \panocp{} subsolver is considered with its default tuning, namely{}
L-BFGS directions (memory 5) and monotone linesearch strategy as in \cite{demarchi2022proximal}.
We use \ipopt{} version 3.13.3, called via \mexipopt{} \cite{bertolazzi2023mexipopt}, with linear solver \mumps{}.
The only non-default optional values we set are the \texttt{limited-memory} Hessian approximation and the desired termination tolerance \texttt{tol}, as specified below.
For consistency with \ipopt{}'s termination condition \cite[\S 2.1]{waechter2006implementation}, \pippo{} requires approximate stationarity measured in the \(L^\infty\)-norm, deviating from \cref{defin:eKKT}; this applies also to \alps{} and \ipprox{}.

For $P$ the set of problems and $S$ the set of solvers,
let $t_{s,p}$ denote the user-defined metric for the computational effort required by solver $s \in S$ to solve instance $p \in P$ (lower is better).
We will monitor the (total) number of gradient evaluations, so that the computational overhead triggered by backtracking is fairly accounted for, the number of (outer) iterations, and the wall-clock runtime.
Then, we display \emph{data profiles} to graphically summarize our numerical results and compare different solvers.
A data profile is the graph of the cumulative distribution function $\func{f_s}{[0,\infty)}{[0,1]}$ of the evaluation metric, namely
$f_s(t) \coloneqq |\set{p\in P}[t_{s,p}\leq t]| / |P|$.
As such, each data profile reports the fraction of problems $f_s(t)$ that can be solved (for a given tolerance $\epsilon$) by solver $s$ with a computational budget $t$, and therefore it is independent of the other solvers \cite{more2009benchmarking}.

		\subsection{Nonnegative PCA}\label{sec:Numerics:NonnegPCA}%
			Principal component analysis (PCA) aims at estimating the direction of maximal variability of a high-dimensional dataset.
Imposing nonnegativity of entries as prior knowledge, we address PCA restricted to the positive orthant:
\begin{equation}
	\label{eq:nonneg_pca}
	\maximize_{\x\in\R^n}~
	\x^\top \mat{Z} \x
	\quad
	\stt{}~
	\| \x \| = 1 ,~
	\x \geq \zeros .
\end{equation}
This task falls within the scope of \eqref{eq:P}, with $f(\x) \coloneqq - \x^\top \mat{Z} \x$, $g(\x) \coloneqq \indicator_{\|\cdot\| = 1}(\x)$, and $\c(\x) = - \x$,
and has been considered in \cite{demarchi2024interior} for validating \ipprox{} and tuning its hyperparameters.
Here, we start by showing that all \pippo{}'s variants significantly outperform \ipprox{} on the same instances the latter was fine-tuned with (using the inverse barrier); cf. \cite[\S 5.2]{demarchi2024interior}.
Then, we investigate how the computational effort of \pippo{}'s default setup (that is, with log-like barrier and subsolver \panocp{}) scales with the problem size and the accuracy requirement.

\paragraph*{Setup}
	We generate synthetic problem data as in \cite[\S 5.2]{demarchi2024interior}.
	For a problem size $n\in\N$,
	let $\mat{Z} = \sqrt{\sigma_n} \vec{z} \vec{z}^\top + \mat{N} \in\R^{n\times n}$, where $\mat{N} \in\R^{n\times n}$ is a random noise matrix,
	$\vec{z}\in\R^n$ is the true (random) principal direction,
	and $\sigma_n > 0$ is the signal-to-noise ratio.
	We consider some dimensions $n$ and,
	for each dimension,
	the set of problems parametrized by $\sigma_n \in \set{0.05, 0.1, 0.25, 0.5, 1.0}$ and $\sigma_s \in \set{0.1, 0.3, 0.7, 0.9}$, which control the noise and sparsity level, respectively.
	There are 5 choices for $\sigma_n$, 4 for $\sigma_s$, and, for each set of parameters, 2 instances are generated with different problem data $\mat{Z}$ and starting point ${\x}^0$.
	Overall, each solver-settings pair is invoked on 40 different instances for each dimension $n$.

	A strictly feasible starting point ${\x}^0$ is generated by sampling a uniform distribution over $[0,3]^n$ and projecting onto $\dom g = \set{\x\in\R^n}[\|\x\| = 1]$.
	This property is necessary for \ipprox{} but not for the other solvers.
	We will test \pippo{}, \ipopt{} and \alps{} also with arbitrary initialization, in which case ${\x}^0$ is generated by sampling a uniform distribution over $[-3,3]^n$ and then projecting onto $\dom g$.
		Furthermore, since \ipprox{} requires a nonnegative-valued barrier function \cite[Ass. 2]{demarchi2024interior}, its variant with log barrier is not included in these tests.
		Finally, \ipopt{} tackles \eqref{eq:nonneg_pca} encoded with the nonlinear equality constraint $\sum_{i=1}^n \x_i^2 = 1$ and simple lower bounds on $\x$.

\paragraph*{Barriers and subsolvers}
	\Cref{alg:general} is controlled by, and its performance depends on, several algorithmic hyperparameters, such as the (sequences of) barrier and penalty parameters, the choice of barrier function $\b$, and the subsolver adopted at \cref{state:general:xk}.
	We now focus on the effect of the barriers specified in \cref{table:psi},
	for different levels of accuracy requirements, testing all solver variants on \emph{tiny} and \emph{small} instances with problem dimensions $n \in \set{10, 15, 20}$ and $n\in\{100,150,200\}$, respectively.
	Moreover, because of the excessive runtime to perform all simulations for \ipprox{},
	we exclude it altogether for the high accuracy tests, for which we consider starting points \({\x}^0\) that are not necessarily (strictly) feasible.

The results are graphically summarized in \cref{fig:nonneg_pca_time} with data profiles relative to runtime.
All solvers returned successfully within the desired primal-dual tolerances.
Across all accuracy levels,
\pippo{} inverse operates consistently better than the other variants of \pippo{},
all of which outperform \ipprox{}.
In particular,
the overall effort (in terms of runtime) required by the inverse barrier (in both \pippo{} and \ipprox{}) is less than with the log-like and log barriers , whose profiles almost overlap.
 With increasing accuracy it becomes even more efficient to adopt \pippo{} over \ipprox{}, which performs gradually more poorly.
The slow tail convergence typical of first-order schemes badly affects the scalability of \ipprox{},
whereas the adoption of a quasi-Newton scheme within the subsolver \panocp{} of \pippo{} and \alps{} leads to fast local convergence in practice.
	Despite the ``optimality'' of logarithmic barriers ascertained in \cref{sec:barrierProp},
	the \emph{overall} computational effort (measured in terms of gradient evaluations or runtime) also depends on the subsolver's efficiency in solving the subproblems.

	For tiny instances or low accuracy, the overall runtime of \pippo{} tends to be comparable with that of \ipopt{}.
	For high accuracy requirements and manageable problem sizes, \ipopt{} can profit from the more computation-intense iterations, whereas the cheaper first-order steps of \pippo{}'s subsolver pay off with lower accuracies.
	Adopting the same first-order subsolver, \alps{} appears to finish ahead of \pippo{} and \ipopt{}.
	We can attribute this advantage to the simple structure of the explicit constraints \(\x\geq\zeros\) in \eqref{eq:nonneg_pca}, since the superiority of \alps{} vanishes with more difficult constraints, as witnessed by the following \cref{sec:Numerics:MatrixCompletion}.

\begin{figure}[tbh]
	\centering%
		\includetikz[width=\linewidth]{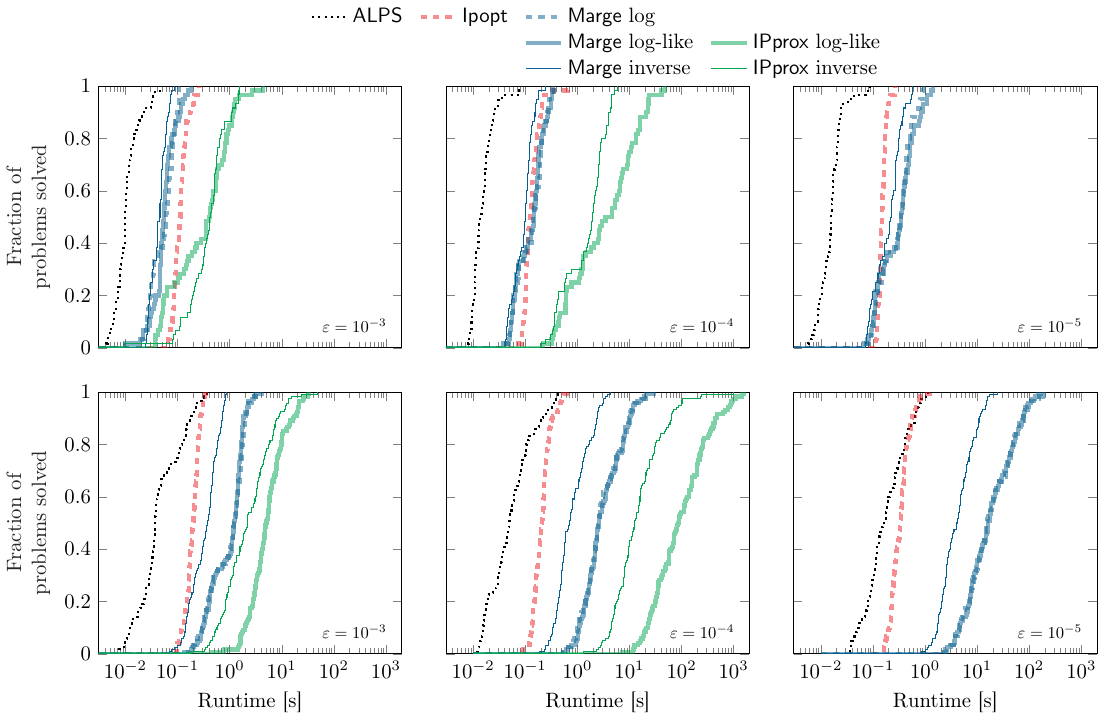}%
	\caption[]{%
		Nonnegative PCA problem \eqref{eq:nonneg_pca}: comparison of solvers on small (bottom) and large instances (top) with low, medium and high accuracy $\epsilon_{\rm p} = \epsilon_{\rm d} = \varepsilon\in \{10^{-3}, 10^{-4}, 10^{-5}\}$ (left to right) using data profiles relative to runtime.
		Results for \ipprox{} (green solid lines) are not included for the high accuracy tests due to excessive runtime; in these simulations, the other solvers are initialized with a possibly infeasible guess.
			In all plots, \pippo{}'s profiles for log-like (blue thick solid lines) and log (blue thick dashed lines) barriers almost coincide.
			}%
	\label{fig:nonneg_pca_time}%
\end{figure}

\paragraph*{Problem size and accuracy}
	To investigate scalability and influence of accuracy requirements,
	we consider larger instances of \eqref{eq:nonneg_pca} with dimensions $n \in \set{10, \lceil 10^{1.5} \rceil, 10^2, \lceil 10^{2.5} \rceil , 10^3}$ and tolerances $\epsilon_{\rm p} = \epsilon_{\rm d} = \varepsilon \in \set{10^{-3}, 10^{-4}, 10^{-5}}$,
	and invoke the default solver \pippo{} (with \panocp{} subsolver and log-like barrier) without time limit.
	For each of these tolerance parameters,
	we generate 2 instances (as described above) for each set of parameters, leading to a total of 200 problem instances to be solved for each accuracy level.

All instances are solved up to the desired primal-dual tolerances.
The influence of problem size and tolerance is depicted in \cref{fig:nonneg_pca_scaling}, which displays for each pair $(n,\varepsilon)$ the number of gradient evaluations and runtime with a jitter plot (for a better visualization of the distribution of numerical values over categories).
The empirical cumulative distribution function and the associated median value are also indicated.
This chart visualizes how problem size and accuracy requirement affect the solution process, and reveals the stark effect of both $n$ and $\varepsilon$.

For low accuracy, \pippo{} scales relatively well with the problem size, whereas larger problems become prohibitive for high accuracy.
This behavior is typical of first-order methods, due to their slow tail convergence,
and we take it as a motivation for investigating the interaction between subpoblems and subsolvers in future works.
Nevertheless, these experiments (and those forthcoming) demonstrate \pippo{}'s capability to handle thousands of variables and constraints in a fully nonconvex optimization landscape.
These results witness a tremendous improvement over \ipprox{}, not only in the practical performance but also in the flexibility of use,
as \pippo{} can be initialized at infeasible points and can take advantage of general-purpose fast subsolvers, such as \panocp{}.

\begin{figure}[tbh]
	\centering
	\includetikz{nonneg_pca_scaling_jittercdf}%
	\caption{Nonnegative PCA problem \eqref{eq:nonneg_pca} with \protect\pippo{} log-like:
		comparison for increasing accuracy requirements (decreasing tolerances $\epsilon_{\rm p} = \epsilon_{\rm d} = \varepsilon$) and problem sizes $n$, relative to gradient evaluations (left) and runtime (right).
		Combination of jitter plot (dots) and empirical cumulative distribution function (solid line) with median value (vertical segment).%
	}%
	\label{fig:nonneg_pca_scaling}
\end{figure}

		\subsection{Low-rank matrix completion}\label{sec:Numerics:MatrixCompletion}%
			Given an incomplete matrix of (uncertain) ratings $\mat{Y}$, a common task is to find a complete ratings matrix $\mat{X}$ that is a parsimonious representation of $\mat{Y}$, in the sense of low-rank, and such that $\mat{Y} \approx \mat{X}$ for the entries available \cite{marecek2017matrix}.
Let $\nusers$ and $\nmovies$ denote the number of users and items, respectively, and
let the rating $\mat{Y}_{i,j}$ by the $i$th user for the $j$th item range on a scale defined by constants $Y_{\min}$ and $Y_{\max}$.
Let $\Omega$ represent the index set of observed ratings, and $|\Omega|$ the cardinality of $\Omega$.
The ratings matrix $\mat{Y}$ could be very large and often most of the entries are unobserved, since a given user will only rate a small subset of items.
Low-rankness of $\mat{X}$ can be enforced by construction, with the Ansatz $\mat{X} \equiv \mat{U} \mat{V}^\top$, as in dictionary learning.
In practice,
for some prescribed embedding dimension $\natoms$,
we seek a user embedding matrix $\mat{U} \in \R^{\nusers\times\natoms}$ and an item embedding matrix $\mat{V}\in\R^{\nmovies\times\natoms}$.
Each row $\mat{U}_i$ of $\mat{U}$ is a $\natoms$-dimensional vector representing user $i$,
while each row $\mat{V}_j$ of $\mat{V}$ is a $\natoms$-dimensional vector representing item $j$.
We address the joint completion and factorization of the ratings matrix $\mat{Y}$,
encoded in the following form:
\begin{align*}
\numberthis\label{eq:matrixcompletion}
	\minimize_{\substack{\mat{U}\in\R^{\nusers\times\natoms},\\ \mat{V}\in\R^{\nmovies\times\natoms}}}
\quad &
	\frac{1}{|\Omega|}
	\sum_{(i,j)\in\Omega} \left( \innprod{\mat{U}_i}{\mat{V}_j} - \mat{Y}_{i,j} \right)^2
	+
	\frac{\lambda}{\nmovies} \sum_{j=1}^{\nmovies} \| \mat{V}_j \|_0
\\
	\stt
\quad &
	\max\set{Y_{\min},\mat{Y}_{i,j}-1}
	\leq
	\innprod{\mat{U}_i}{\mat{V}_j}
	\leq
	\min\set{Y_{\max},\mat{Y}_{i,j}+1}
&&
	\forall (i,j)\in\Omega,
\\
&
	Y_{\min}
	\leq
	\innprod{\mat{U}_i}{\mat{V}_j}
	\leq
	Y_{\max}
&&
	\forall (i,j) \notin\Omega,
\\
&
	\|\mat{U}_i\|_2 = 1
&&
	\forall i \in \set{1,\dots,\nusers}.
\end{align*}
While aiming at $\mat{U} \mat{V}^\top \approx \mat{Y}$, the model in \eqref{eq:matrixcompletion} sets the rating range \([Y_{\min},Y_{\max}]\) as a hard constraint for all predictions;
a tighter constraint is imposed to observed ratings.
Following \cite[\S 6.2]{themelis2018forward}, we explicitly constrain the norm of the dictionary atoms \(\mat{U}_{i}\), without loss of generality, to reduce the number of equivalent (up to scaling) solutions;
this norm specification is included as an indicator in the nonsmooth objective term \(g\).
Furthermore, we encourage sparsity of the coefficient representation \(\mat{V}_{j}\) with the $\|\cdot\|_0$ penalty, which counts the nonzero elements,
scaled with a regularization parameter $\lambda \geq 0$.
Overall, this problem has $n\coloneqq\natoms(\nusers+\nmovies)$ decision variables and $m\coloneqq 2\nusers\nmovies$ inequality constraints.
All terms ($f$, $g$, and $\c$) are nonconvex,
as well as the (unbounded) feasible set.

It appears nontrivial to find a strictly feasible point for \eqref{eq:matrixcompletion},
in the sense of \cite[Def. 2]{demarchi2024interior},
which is required for initializing \ipprox{},
thus highlighting a major advantage of \pippo{}.
	Another feature of \pippo{} is the versatility of model \eqref{eq:P}, which enables an effortless handling of the nonsmooth term $\|\cdot\|_0$, in stark contrast with \ipopt{}.
	Although \eqref{eq:matrixcompletion} can be reformulated as a nonlinear program via \eqref{eq:l0_as_linprog} using $3\nmovies\natoms$ auxiliary variables, $4\nmovies\natoms$ linear and $\nmovies$ nonlinear additional constraints, this extended formulation hinders the scalability to large datasets.
	For comparison, we will consider instances of \eqref{eq:matrixcompletion} with and without the sparsity-promoting term \(\|\cdot\|_0\) in the objective.

\paragraph*{Setup}
	We consider the \emph{MovieLens 100k} dataset,\footnote{The entire dataset is available at \url{https://grouplens.org/datasets/movielens/100k/}.}
	which contains $1000023$ ratings for $3706$ unique movies (the dataset contains some repetitions in movie ratings and we have ignored them);
	these recommendations were made by $6040$ users on a discrete rating scale from $Y_{\min}=1$ to $Y_{\max}=5$.
	If we construct a matrix of movie ratings by the users, then it is a sparse unstructured matrix with only $4.47\%$ of the total entries available.

	We compare \pippo{} to \alps{} and \ipopt{}, testing their scalability with instances of increasing size.
		The set of problems consists of \emph{small} and \emph{large} instances, as well as instances \emph{with} and \emph{without} the \(\|\cdot\|_0\) term in \eqref{eq:matrixcompletion}.
		We set the regularization parameter \(\lambda = 10^{-2}\), randomly generate 4 starting points for each problem instance, and invoke each solver with the primal-dual tolerances \(\epsilon_{\rm p}=\epsilon_{\rm d} = 10^{-3}\) and without time limit.
		First, we fix the number of atoms to $\natoms=5$ and consider the small instances corresponding to subsets of $\nusers \in \set{3,4,\ldots,7}$ users (always starting from the first one),
		for a total of 20 calls to each solver variant.
		For these problem instances the sizes range $n \in [1790, 3450]$ and $m \in [2130, 9562]$.
		Then, we fix the number of atoms to $\natoms=10$ and consider the large instances of \eqref{eq:matrixcompletion} corresponding to subsets of $\nusers \in \set{11,12,\ldots,20}$ users, for a total of 40 calls to each solver variant.
	For these problem instances the sizes range $n \in [7630, 9930]$ and $m \in [16544, 38920]$; \ipopt{} is not tested on these large instances with $\|\cdot\|_0$ because of excessive runtime.

\paragraph*{Results}
	A summary of the numerical results is depicted in \cref{fig:lowrank} as data profiles, while median values are reported in \cref{table:lowrank}.
	Except \ipopt{} on instances with $\|\cdot\|_0$, all solver variants were able to find a solution up to the specified primal-dual tolerance.
		For small instances, there is not a clear winner among \pippo{}'s variants and \alps{}, but \ipopt{} appears to fall behind.
		\ipopt{} returned unsuccessfully on 30\% of calls for small instances with $\|\cdot\|_0$, requiring more than ten times the runtime of other solvers; for this reason it was not tested on the large instances.
		Although \ipopt{} requires significantly fewer gradient evaluations, its overall runtime soars, even on the instances without $\|\cdot\|_0$.
		Indeed, the metrics reported in \cref{table:lowrank} indicate that, to solve at least half of all large instances without $\|\cdot\|_0$, \ipopt{}'s runtime is six times longer than \pippo{}'s with log-like barrier.
		On the large instances with \(\|\cdot\|_0\), \pippo{} with log-like and log barriers consistently outperformed the variant with inverse barrier and \alps{}, cutting the runtime almost in half.
 	These results show that \pippo{} can handle problems with thousands of variables and constraints, with a purely primal approach, and be as effective as well-established general-purpose solvers.

\begin{figure}[tbh]
	\centering%
		\includetikz[width=\linewidth]{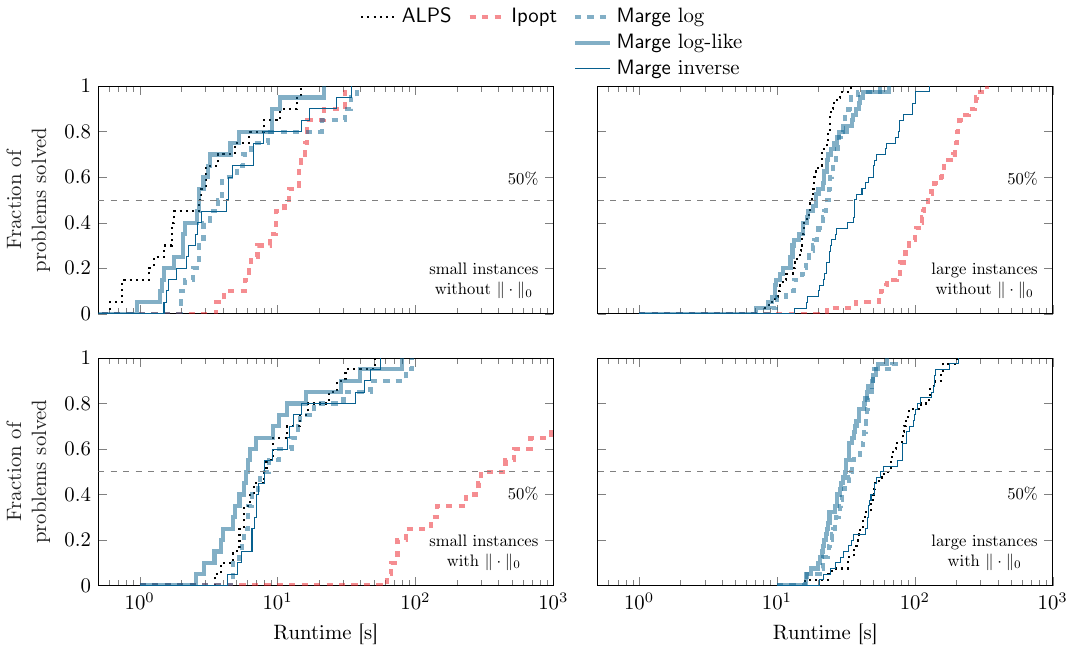}%
	\caption{%
		Matrix completion problem \eqref{eq:matrixcompletion}: comparison of different solvers and variants on small (left) and large instances (right), with (bottom) and without the \(\|\cdot\|_0\) regularization (top), using data profiles relative to runtime.
		\ipopt{} (red thick dashed lines) is not included on large instances with \(\|\cdot\|_0\) because of excessive runtime.	}%
	\label{fig:lowrank}
\end{figure}

	\begin{table}[tbh]
		\centering
		\begin{tabular}{cccccc}
			\hline
			& \multicolumn{2}{c}{Small instances} & \multicolumn{2}{c}{Large instances} & \\
			Solver variant & \# gradient eval. & Runtime [s] & \# gradient eval.  & Runtime [s] & \\
			\hline
			\pippo*{} log-like 	& 4103 	& 2.7 & 6874 & 18.8 & \parbox[t]{2.25ex}{\multirow{5}{*}{\rotatebox[origin=c]{-90}{without \(\|\cdot\|_0\)}}}\\
			\pippo*{} inverse 	& 7262 	& 4.2 & 15064 & 36.6 &\\
			\pippo*{} log 		& 5941 	& 3.7 & 8657 & 23.0 &\\
			\alps{} 			& 6223 	& 2.7 & 11959 & 17.4 &\\
			\ipopt{} 			& 161 	& 11.2 & 215 & 125.2 &\\
			\hline
			\pippo*{} log-like 	& 4248 	& 5.9 & 7156 & 31.0 & \parbox[t]{2.25ex}{\multirow{5}{*}{\rotatebox[origin=c]{-90}{with \(\|\cdot\|_0\)}}} \\
			\pippo*{} inverse 	& 6789 	& 7.6 & 15239 & 56.5 & \\
			\pippo*{} log 		& 5805 	& 7.5 & 7712 & 34.4 & \\
			\alps{} 			& 6906 	& 8.0 & 15239 & 59.8 & \\
			\ipopt{} 			& 1176 	& 296.7 & --- & --- &\\
			\hline
		\end{tabular}
		\caption{%
			Matrix completion problem \eqref{eq:matrixcompletion}: performance comparison of different solvers and variants, reporting the computational effort needed to achieve 50\% of problems solved.
			The reported values arise from the intersection of the data profiles in \cref{fig:lowrank} with the 50\% horizontal line; analogously for the number of gradient evaluations.
			\ipopt{} was not tested on large instances with $\|{}\cdot{}\|_0$ because of excessive runtime.%
		}%
		\label{table:lowrank}%
	\end{table}%

\bigskip

Among all instances of \eqref{eq:matrixcompletion} considered so far, we observed that the penalty parameter \(\alpha_k\) was rarely, if ever, updated by any variants of \pippo{}.
For illustrative purposes, we solved once again the large instance with $\nusers = 11$ from above, but starting with the much smaller penalty parameter \(\alpha_0 = 10^{-4}\).
The penalty behavior is displayed in \cref{fig:lowrank_alpha} (left panel), tracing the number of updates for \(\alpha_k\) along the iterations for each of the 4 starting points.
The solution process of each solver is consistent throughout all initializations, and the total number of penalty updates is also distinctive of \pippo{} (except for the log variant) and \alps{}; see right panel of \cref{fig:lowrank_alpha}.
The latter takes between 8 and 11 updates, whereas the former only 4 or 5 (7 updates for the log variant).
Such updates takes place in \alps{} whenever \emph{local} improvement in feasibility is deemed insufficient from one iteration to the next.
The same favorable behavior of \pippo{} is enabled by the relaxed condition at \cref{state:general:pk>eps} of \cref{alg:general},
which does not require any sufficient improvement at every iteration,
but instead monitors \emph{globally} how the constraint violation $p_k$ vanishes.
Correspondingly, only the barrier parameter $\mu_k$ is decreased in order to reduce the complementarity slackness $s_k$, see \cref{thm:CS}.
When active, this \emph{exact} penalty quality prevents the barrier to yield too much ill-conditioning.

 	While the update of the penalty parameter \(\alpha\) for log-like and inverse barriers follow similar profiles (though exhibiting a discrepancy coherent with \cref{sec:barrierProp}),
 	\cref{fig:lowrank_alpha} (left panel) portrays a sharper increase of the penalty parameter when adopting the logarithmic barrier.
 	Most directly, this different behavior emerges because, starting with \((\alpha_0,\mu_0)=(10^{-4},1)\), \(\b*(\nicefrac{\alpha_k}{\mu_k})=-1-\ln(\nicefrac{\alpha_k}{\mu_k})>0\) holds for some iterations until the ratio \(\nicefrac{\alpha_k}{\mu_k}\) becomes large enough.
 	In practice, during these initial iterations, \cref{state:general:pk>eps} of \cref{alg:general} effectively checks the condition \(p_k > \epsilon_{\rm p}\).
 	Log-like and inverse barriers do not experience this effect since \(\b*\leq 0\); see \cref{table:psi,thm:b*}.
 	Moreover, we do not observe this phenomenon in \cref{fig:lowrank} because, starting with the default values \(\alpha_0=\mu_0=1\), it is
 	\[
 	0
 	>
 	\frac{\b*(\nicefrac{\alpha_0}{\mu_0})}{\nicefrac{\alpha_0}{\mu_0}}
 	\leq
 	\frac{\b*(\nicefrac{\alpha_k}{\mu_k})}{\nicefrac{\alpha_k}{\mu_k}}
 	\nearrow
 	0
 	\]
 	for all barrier functions specified in \cref{table:psi}; see \cref{thm:b*/t*}.
 	More fundamentally, we can attribute the disparity in \cref{fig:lowrank_alpha} to the fact that, in contrast to the log-like and inverse barriers, the logarithmic barrier is only asymptotically well-behaved; see \cref{table:kappa}.

Overall, despite the fully nonconvex setting of problem \eqref{eq:matrixcompletion},
\pippo{} is able to solve these instances with only moderate values for the penalty parameter \(\alpha_k\).
Although these observations indicate that the assumptions behind \cref{thm:cvx:alphaConstant} could be relaxed,
the penalty exactness does not always take effect, as demonstrated in the following section.

\begin{figure}[tbh]
	\centering%
	\includetikz{lowrank_penalty_behavior}%
	\caption{%
		Matrix completion problem \eqref{eq:matrixcompletion}, smallest large instance:
		comparison of different solvers and variants in terms of updates for the penalty parameter \(\alpha_k\) along the iterations (left) and at the solution (right).
		All solvers start with the penalty value \(\alpha_0 = 10^{-4}\) for each of the 4 initializations.
		Fewer updates are expected to result in better-conditioned subproblems.
		Notice in the left panel that \protect\pippo{} log-like (orange) and inverse (green) increase \(\alpha_k\) only after several iterations and fewer times, thanks to the relaxed criterion at \cref{state:general:pk>eps}; \protect\pippo{} log (yellow) and \alps{} (black) update \(\alpha_k\) from the first iterations.%
	}%
	\label{fig:lowrank_alpha}
\end{figure}

		\subsection{Exact penalty behavior}\label{sec:Numerics:ExactPenalty}%
			After observing the bounded penalty behavior of \pippo{} in \cref{sec:Numerics:MatrixCompletion}, we present now an example problem where \pippo{} exhibits $\alpha_k\nearrow\infty$, hence it does \emph{not} boil down to an exact penalty method.
For this purpose it suffices to consider the two-dimensional \emph{convex} problem
\begin{equation}
	\label{eq:problem_lack_cq}
	\minimize_{\x\in\R^2}~
	\x_1 + \indicator_{\R_+}(\x_2)
	\quad
	\stt{}~
	\x_1^2 + \x_2 \leq 0
	,
\end{equation}
whose (unique) solution is the only feasible point \(\x^\star = (0,0)\).
Since there exists no suitable multiplier \(y^\star \), the minimizer \(\x^\star\) is not \KKT-optimal.
Hence, there is no contradiction with \cref{thm:cvx:alphaConstant}.

We intend to solve problem \eqref{eq:problem_lack_cq}
with tolerances \(\epsilon_{\rm p} = \epsilon_{\rm d} = 10^{-5}\),
initializing \pippo{} and \alps{} from 100 random points
generated according to $\x_i^0 \sim \normaldistrib(0,\sigma_x^2)$ with
large standard deviation $\sigma_x = 30$.

\paragraph*{Results}
	All solver variants find a primal-dual solution to \eqref{eq:problem_lack_cq}, up to the specified tolerances, for all starting points.
	The progression of \pippo{} and \alps{} in terms of penalty updates are summarized in \cref{fig:penalty_behavior}, in analogy with \cref{fig:lowrank_alpha}.
	The unbounded behavior of the penalty parameters $\seq{\alpha_k}$ appears evident for all solvers.
	Thus, $\alpha_k\nearrow\infty$ seems necessary to drive the constraint violation $p_k$ to zero,
	while the barrier parameter $\mu_k\searrow 0$ forces the complementarity slackness $s_k$.

\begin{figure}[tbh]
	\centering
		\includetikz{licq_penalty_behavior}%
	\caption{%
		Convex problem without LICQ \eqref{eq:problem_lack_cq}, first 10 initializations:
		comparison of different solvers and variants in terms of updates for the penalty parameter \(\alpha_k\) along the iterations (left) and at the solution (right).
		All solvers start with the (default) penalty value \(\alpha_0=1\).
		The left panel depicts the solver trajectories for each initialization, indicating how many times the penalty parameter \(\alpha_k\) is updated during the solution process.
		Fewer updates are expected to result in better-conditioned subproblems.
		In all cases the sequence of penalty parameters \(\seq{\alpha_k}\) blows up,
		but variants of \protect\pippo{} terminate sooner and with less penalty updates than \alps{}.
		The results for logarithmic barriers almost overlap and always take less iterations than the inverse barrier, as expected.%
	}%
	\label{fig:penalty_behavior}
\end{figure}

Considering \pippo{}'s variants,
the log-like and log barriers are again almost indistinguishable and yield better results than the inverse one.
\alps{} performs poorly and always returns after more iterations and updates of the penalty parameter.
All runs of \pippo{} and \alps{} terminate with \(\alpha_k\) updated 8 and 21 times, respectively, namely increased up to $\alpha_0 \delta_\alpha^8 = 256$ and $\alpha_0 \delta_\alpha^{21} \approx 2.1\cdot 10^6$.
Then, it is clear that the performance of \alps{} is badly affected by the lack of regularity in \eqref{eq:problem_lack_cq}; without an appropriate dual estimate, \alps{} essentially falls back to a quadratic penalty scheme.
In contrast, all \pippo{}'s variants cope well with the lack of penalty exactness and operate consistently better than \alps{} in this scenario.

		\subsection{Handling equalities}\label{sec:Numerics:Equalities}%
			Even though equality constraints can be handled explicitly, it is important that \pippo{} can cope with hidden equalities too.
These may appear as the result of automatic model constructions, and are often difficult to identify by inspection.
Here we compare the behavior of \pippo{} when the problem specification has explicit equalities against the same problem but whose constraints are described using two inequalities each.
The latter approach not only increases the number of constraints, but it has also the drawback that the Mangasarian-Fromovitz constraint qualification (MFCQ) fails to hold at all feasible points.
Effectively, the redundancy introduced by splitting into two inequalities undermines the practical relevance of \cref{thm:KKT=>KKTa}; see \cite[\S 4.1.4]{curtis2012penalty}.

Consider quadratic programming (QP) problems of the form
\begin{equation}
	\label{eq:problem_equalities}
	\minimize_{\x\in\R^n}~
	\tfrac{1}{2} \x^\top \mat{Q} \x + \innprod{\vec{q}}{\x}
	\quad
	\stt{}~
	\mat{A} \x = \vec{b}
	,~
	\x^{\rm low} \leq \x \leq \x^{\rm upp}
\end{equation}
with matrices $\mat{Q}\in\R^{n\times n}$, $\mat{A}\in\R^{m\times n}$ and vectors $\vec{q},\x^{\rm low},\x^{\rm upp}\in\R^n$, $\vec{b}\in\R^m$ as problem data.
Problem \eqref{eq:problem_equalities} can be cast as \eqref{eq:P} with cost functions
\(f(\x)\coloneqq\tfrac{1}{2} \x^\top \mat{Q} \x + \innprod{\vec{q}}{\x}\) and
\(g(\x)\coloneqq\indicator_{[\x^{\rm low},\x^{\rm upp}]}(\x)\), and constraint function
\(\ceq(\x)\coloneqq \mat{A} \x - \vec{b}\).
We are interested in comparing the performance of \pippo{} (in different variants) with the two problem formulations described in \cref{sec:IP} to deal with equalities:
either by splitting into two inequalities (leading to the sum \(\psi_{\rho*}^{\pm}(t)\coloneqq\psi_{\rho*}(t)+\psi_{\rho*}(-t)\) as in \eqref{eq:psipm})
or by performing a combined marginalization (resulting in \(\psi_{\rho*}^{\rm eq}\)).
Hence, for each solver's variant and problem instance, we contrast these two formulations, symbolized by \pippo{}$^\pm$ and \pippo{}$^{\rm eq}$, respectively.

\paragraph*{Setup}
	Problem instances are generated as follows:
	we let either $\mat{Q} = \mat{M}\mat{M}^\top$ or $\mat{Q} = \mat{M}+\mat{M}^\top$, where the elements of $\mat{M}\in\R^{n\times n}$ are normally distributed, $\mat{M}_{i,j}\sim \normaldistrib(0,1)$, with only 10\% being nonzero.
	The linear part of the cost $\vec{q}$ is also normally distributed, i.e., $\vec{q}_i \sim \normaldistrib(0, 1)$.
	Simple bounds are generated according to a uniform distribution, i.e., $\x_i^{\rm low} \sim -\uniformdistrib(0, 1)$ and $\x_i^{\rm upp} \sim \uniformdistrib(0, 1)$.
	We set the elements of $\mat{A}\in\R^{m\times n}$ as $\mat{A}_{i,j}\sim \normaldistrib(0,1)$ with only 10\% being nonzero.
	To ensure that the problem is feasible,
	we draw an element $\widehat{\x} \in [\x^{\rm low},\x^{\rm upp}]$ (as $\widehat{x}_i = \x_i^{\rm low} + (\x_i^{\rm upp} - \x_i^{\rm low}) \vec{a}_i$ with $\vec{a}_i \sim \uniformdistrib(0,1)$) and set $\vec{b} = \mat{A} \widehat{\x}$.
	An initial guess is randomly generated for each problem instance, as $\x_i^0 \sim \normaldistrib(0,1)$, and shared across all solvers and formulations.

We consider problems with $m\in\{1,2,\ldots,20\}$ and $n = 10 m$, set the tolerances $\epsilon_{\rm p}=\epsilon_{\rm d}=10^{-5}$, and construct 10 instances for each size, for a total of 200 calls to each solver for each formulation.

\paragraph*{Results}
	Numerical results are visualized
	by means of pairwise (extended) performance profiles.
	Let $t_{s,p}^{\pm}$ and $t_{s,p}^{\rm eq}$ denote the evaluation metric of solver $s \in S$ on a certain instance $p \in P$ with the two formulations.
	Then, for each solver $s$, the corresponding pairwise performance profile displays the cumulative distribution
	$\varrho_s : [0,\infty) \to [0,1]$ of its performance ratio $\tau_{s,p}$, namely
	\begin{equation*}
		\varrho_s(\tau)
		\coloneqq
		\frac{|\set{p\in P}[\tau_{s,p}\leq\tau]|}{|P|}
		\quad\text{where}\quad
		\tau_{s,p}
		\coloneqq
		\frac{t_{s,p}^{\rm eq}}{t_{s,p}^{\pm}}
		.
	\end{equation*}
	Thus, the profile for solver $s$ indicates the fraction of problems $\varrho_s(\tau)$ for which solver $s$ invoked by \pippo{}$^{\rm eq}$ requires at most $\tau$ times the computational effort needed by the same solver $s$ when invoked by \pippo{}$^{\pm}$.

As depicted in \cref{fig:bilateral}, all pairwise performance profiles cross the unit ratio with at least 70\% problems solved, meaning that, across all variants, \pippo{}$^{\rm eq}$ is a more effective formulation than \pippo{}$^\pm$ for a large majority of problems.
Thus, all solver variants benefit from the tailored handling of equality constraints, especially in terms of gradient evaluations (whose profiles are all above 85\% at the unit ratio).
The flat region of \(\psi_{\rho*}^{\pm}\) displayed in \cref{fig:b_rho_equality,fig:b_mu_equality} is arguably responsible for hindering the performance of \pippo{} when equalities are split into two inequalities.
In the possibly nonconvex case (bottom panels of \cref{fig:bilateral}), the narrower advantage of \pippo{}$^{\rm eq}$ could stem from the fact that solvers might end up in different local solutions, or even spurious ones due to the reformulation with two inequalities.
Moreover, the smaller benefit of \pippo{}$^{\rm eq}$ observed in terms of runtime over gradient evaluations could be ascribed to the slightly more complicated computation of \(\psi_{\rho*}^{\rm eq}\) over \(\psi_{\rho*}\).
Regardless, all variants exhibited a robust performance with both formulations, confirming that our algorithmic framework can endure redundant, degenerate constraints and hidden equalities.

\begin{figure}[tbh]
	\centering%
	\includetikz[width=\linewidth]{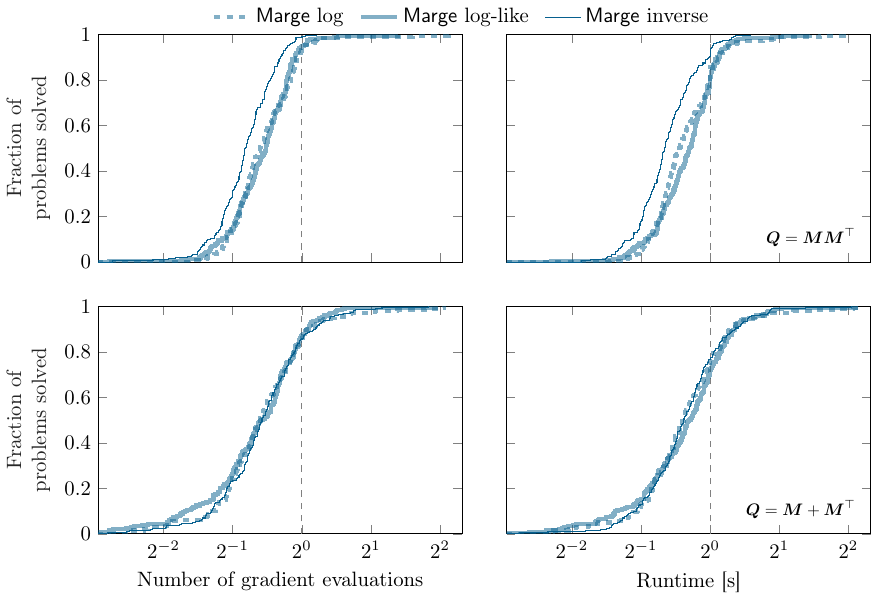}%
	\caption{%
		Quadratic programs \eqref{eq:problem_equalities}: comparison of different solvers and formulations using pairwise performance profiles, relative to gradient evaluations (left) and runtime (right), for \protect\pippo$^{\rm eq}$ (explicit equality) over \protect\pippo$^{\pm}$ (split into two inequalities).
		Profiles located in the top-left indicate that \protect\pippo{}$^{\rm eq}$ tends to outperform \protect\pippo$^{\pm}$.
		Across all barrier functions, \protect\pippo{}$^{\rm eq}$ is more efficient than \protect\pippo{}$^\pm$ for both convex (top panels) and possibly nonconvex problems (bottom panels).%
	}%
	\label{fig:bilateral}
\end{figure}

	\section{Final remarks}
		We proposed \pippo{}, an optimization framework for the numerical solution of constrained structured problems in the fully nonconvex setting.
\pippo{} combines (exact) penalty and barrier approaches through a marginalization step, which not only preserves the problem size by avoiding auxiliary variables, but also enables the adoption of generic subsolvers.
In particular, by extending the domain of the subproblems' smooth objective term, the proposed methodology overcomes the need for safeguards within the subsolver and the difficulty of accelerating it, a major drawback of \ipprox{} \cite{demarchi2024interior}.
Under mild assumptions, our theoretical analysis established convergence results on par with those typical for nonconvex continuous optimization.
Most notably, all feasible accumulation points are asymptotically KKT optimal.
We validated and compared our approach numerically with problems arising in data science, studying scalability and the effect of accuracy requirements.
Furthermore, illustrative examples confirmed the robust behavior of \pippo{} on badly formulated problems and degenerate cases.

The methodology in this paper could be applied to a combination of barrier and augmented Lagrangian approaches.
By generating a smoother penalty-barrier term, this strategy could benefit from the more effective performance of subsolvers.
However, this development comes with the additional challenge of designing suitable updates for the Lagrange multipliers.
Future research may also focus on specializing the proposed framework to classical nonlinear programming, taking advantage of the special structure and linear algebra.
Finally, mechanisms for rapid infeasibility detection and guaranteed existence of subproblems' solutions should be investigated.

	\appendix
		\section{Auxiliary results and missing proofs}
			This appendix contains some auxiliary results and proofs of statements referred to in the main body.

\begin{lemma}[Properties of the barrier \(\b\)]%
	Any function \(\b\) as in \cref{ass:b} satisfies the following:%
	\begin{enumerate}[widest*=4,align=left]
	\item \label{thm:brange}%
		\(\lim_{t\to-\infty}\b(t)=\inf\b\) and \(\lim_{t\to0^-}\b(t)=\lim_{t\to0^-}\b'(t)=\infty\).
	\item \label{thm:b*}%
		The conjugate \(\b*\) is continuously differentiable on the interior of its domain \(\interior\dom\b*=\R_{++}\) with \(\b*'<0\), and satisfies \(\b*(0)=-\inf\b \) and \(\lim_{\conj t\to\infty}\b*(\conj t)=-\infty\).
	\item \label{thm:bb*}%
		\(\b*(t^*)=\b*'(t^*)t^*-\b(\b*'(t^*))\) for any \(t^*>0\).
	\item \label{thm:b*/t*}%
		The function \((0,\infty)\ni\conj t\mapsto\nicefrac{\b*(\conj t)}{\conj t}=t-\nicefrac{\b(t)}{\b'(t)}\), where \(t\coloneqq\b*'(\conj t)\), is strictly increasing for \(\conj t\) large enough with \(\lim_{\conj t\to\infty}\nicefrac{\b*(\conj t)}{\conj t}=0\).
			If \(\inf\b\geq0\), then it is strictly increasing on the whole \((0,\infty)\).
			\end{enumerate}
\end{lemma}
\begin{proof}~
	\begin{proofitemize}
	\item \ref{thm:brange}~
		Trivial because of strict monotonicity on \((-\infty,0)\) (since \(\b'>0\)).

	\item \ref{thm:b*}~
		It follows from the definition of Fenchel conjugate that \(\b*(0)=-\inf\b\), see also \cite[Prop. 13.10(i)]{bauschke2017convex}.
			Notice that \(\b\) is (essentially) strictly convex and \emph{essentially smooth}, in the sense that \(\b'(t)\to\infty\) as \(t\to0^-\), 0 being the only point in the boundary of \(\dom\b\).
			As such, the conjugate \(\b*\) enjoys the same properties, with \(\interior\dom\b*=\range\b'=\R_{++}\) by virtue of \cite[Thm. 26.1 and 26.3]{rockafellar1970convex}.
				For the same reason, \(\range\b*'=\interior\dom\b=\R_{--}\), hence \({\b*}'<0\) on \((0,\infty)\).
		Finally, since \(\inf\b*=-\b(0)=-\infty\), we conclude that \(\lim_{t^*\to\infty}\b*(t^*)=-\infty\).

	\item \ref{thm:bb*}~
		This is a standard result of Fenchel conjugacy, see e.g. \cite[Prop. 16.10]{bauschke2017convex}, here specialized to the fact that \(\range\b'=\R_{++}\).

	\item \ref{thm:b*/t*}~
		Observe that
		\(
			\left(\nicefrac{\b*(t^*)}{t^*}\right)'
		=
			\nicefrac{\b(t)}{(t^*)^2}
		\)
		for \(t^*>0\) and \(t\coloneqq\b*'(t^*)\to0^-\) as \(t^*\to\infty\).
		Since \(\b(0^-)=\infty\), for \(t^*\) large enough (or for any \(t^*>0\) if \(\inf\b\geq0\)) this derivative is strictly positive, and as such the function strictly increasing.
		Lastly,
		\begin{equation}\label{eq:b*/t*}
			\lim_{t^*\to\infty}\tfrac{\b*(t^*)}{t^*}
		=
			\lim_{t^*\to\infty}{\b*}'(t^*)
		=
			\lim_{\b'(t)\to\infty}t
		=
			\lim_{t\to0^-}t
		=
			0,
		\end{equation}
		where the first equality uses L'H\^opital's rule.
	\qedhere
	\end{proofitemize}
\end{proof}

\begin{appendixproof}[Lemma ]{thm:KKTa}%
	The Lagrangian associated to \eqref{eq:Qa} reads
	\begin{align*}
		\mathcal L(\x,\s,\sseq;\lam^+,\lam^-,\y)
	={} &
		\cost(\x)
		+
		\alpha\innprod{\ones}{\s}
		+
		\indicator_{\R_+^m}(\s)
		+
		\alpha\innprod{\ones}{\sseq}
	\\
	&
		+
		\innprod{\y}{\c(\x)-\s}
		+
		\innprod{\lam^+}{\ceq(\x)-\sseq}
		-
		\innprod{\lam^-}{\ceq(\x)+\sseq},
	\end{align*}
	so that the corresponding KKT conditions are
	\[
		\begin{cases}
			\zeros
			\in
			\partial\cost(\x)+\trans{\jac\c(\x)}\y+\trans{\jac\ceq(\x)}(\lam^+-\lam^-)
		\\
			0
			\in
			\alpha-\y_i+\ncone_{\R_+}(\s_i)
		\\
			0
			=
			\alpha-\lam^+_j-\lam^-_j
		\end{cases}
	\quad
		\begin{cases}
			\c_i(\x)\leq\s_i
		\\
			|\ceq_j(\x)|\leq\sseq_j
		\\
			\y_i,\lam_j^\pm\geq0
		\end{cases}
	\quad
		\begin{cases}
			0=\y_i(\c_i(\x)-\s_i)
		\\
			0=\lam^+_j(\sseq_j-\ceq_j(\x))
		\\
			0=\lam^-_j(\sseq_j+\ceq_j(\x))
		\end{cases}
	\]
	where \(i=1,\dots,m\) and \(j=1,\dots,\meq\).
	Here, the first set of conditions corresponds to Lagrangian stationarity (LS), the second one to primal and dual feasibility (PDF), and the last one to complementarity slackness (CS).

	Suppose that \(\sseq_j>|\ceq_j(\x)|\); then, CS implies that \(\lam_j^\pm=0\), contradicting the fact that \(\lam_j^++\lam_j^-=\alpha\) in LS.
	Thus, \(\sseq=|\ceq(\x)|\) must hold.
	Suppose instead that \(\s_i>\c_i(\x)\); then, \(\y_i=0\) by CS, and the second condition in LS then implies that \(\s_i=0\) (for otherwise \(\ncone_{\R_+}(\s_i)=\set{0}\)).
	Either way, since \(y_i-\alpha\in\ncone_{\R_+}(\s_i)\subseteq\R_-\), one has that \(y_i-\alpha\leq0\).
	These observations show that \(\s=[\c(\x)]_+\) and that \(\zeros\leq\y\leq\alpha\ones\).

	Set \(\yeq\coloneqq\lam^+-\lam^-\), which combined with the last condition in LS yields that
	\(
		\lam^+=\frac{1}{2}(\alpha\ones+\yeq)
	\)
	and
	\(
		\lam^-=\frac{1}{2}(\alpha\ones-\yeq)
	\).
	Since \(\lam^\pm\geq\zeros\) by PDF, one has that \(|\yeq|\leq\alpha\ones\).
	With these substitutions, observing that
	\begin{equation}\label{eq:c+ceq+-}
		\sseq-\ceq(\x)=2[\ceq(\x)]_-,
	\quad
		\sseq+\ceq(\x)=2[\ceq(\x)]_+,
	\quad\text{and}\quad
		\s-\c(\x)=[\c(\x)]_-,
	\end{equation}
	the KKT conditions simplify as
	\[
		\begin{cases}
			\zeros
			\in
			\partial\cost(\x)+\trans{\jac\c(\x)}\y+\trans{\jac\ceq(\x)}\yeq
		\\
			\zeros\leq\y\leq\alpha\ones
		\\
			|\yeq|\leq\alpha\ones
		\end{cases}
	\quad
		\begin{cases}
			0=\y_i[\c_i(\x)]_-,~
			y_i-\alpha\in \ncone_{\R_+}([\c_i(\x)]_+)
		\\
			0=(\alpha+\yeq_j)[\ceq_j(\x)]_-
		\\
			0=(\alpha-\yeq_j)[\ceq_j(\x)]_+
		\end{cases}
	\]
	where \(i=1,\dots,m\) and \(j=1,\dots,\meq\).
	Noticing that \(\ncone_{\R_+}([\c_i(\x)]_+)=\set{0}\) when \([\c_i(\x)]_+>0\), \eqref{KKTa} are obtained.
	Conversely, by reverting \(\yeq=\lam^+-\lam^-\) and using \eqref{eq:c+ceq+-} to substitute \([\ceq(\x)]_\pm\) and \([\c(\x)]_+\) one reobtains the KKT conditions for problem \eqref{eq:Qa}.
\end{appendixproof}

\begin{appendixproof}[Theorem ]{thm:psi}%
	We start by observing that
	\[
		\psi_{\rho*}(t)
	\defeq
		\min_{z\geq0}\set{\rho*z+\b(t-z)}
	=
		\min_{z\in\R}\set{\rho*|z|+\b(t-z)},
	\]
	owing to the fact that \(\b\) is increasing and consequently \(\inf_{z\leq 0}\b(t-z)=\b(t)\) for any \(t\).
		Hence, \(\psi_{\rho*}\) is the \emph{\(\rho*\)-Pasch-Hausdorff envelope} of the convex function \(\b\) as in \cite[Def. 12.16]{bauschke2017convex}, and is itself convex by virtue of \cite[Prop. 12.11]{bauschke2017convex}.
		Since \(\b''>0\), and \(\b'(0^-)=\infty\), there exists \(\rho<0\) such that \(\b'(\rho)=\rho*\).
		In particular, \(\b'((-\infty,\rho])=(0,\rho*]\) and \(\b'((\rho*,0))=(\rho^*,\infty)\), implying that \(\b\) is \(\rho*\)-Lipschitz continuous on \((-\infty,\rho]\).
		The expression \eqref{eq:psieq} and \(\rho*\)-Lipschitz continuity then follow from \cite[Prop. 12.17(i)]{bauschke2017convex}, and in turn so does the expression \eqref{eq:psi'} of the derivative, which is clearly globally Lipschitz continuous as well.
		Finally, that \(\psi_{\rho*}\circ c\) is convex whenever \(c\) is convex follows from the fact that \(\psi_{\rho*}\) is increasing (additionally to being convex).
\end{appendixproof}

\begin{appendixproof}[Theorem ]{thm:psieq}%
	Function \(F(z,t)\coloneqq \rho*z+\b(t-z)+\b(-t-z)\) being minimized on the right-hand side of \eqref{eq:psieqdef} is convex, hence so is its marginalized (wrt \(z\)) function \(\psi_{\rho*}^{\rm eq}\).
	For every \(t\in\R\), \(z\mapsto F(z,t)\) is proper, lsc, strictly convex, coercive, and differentiable on its (open) domain, and thus admits a unique minimizer \(z_{\rho*}(t)\), this being the (unique) zero of the derivative, that is, such that \eqref{eq:psieq:s*} holds.
	In particular, \(\psi_{\rho*}^{\rm eq}\) is convex and finite valued, and thus everywhere subdifferentiable.
	Appealing to \cite[Thm. 10.13]{rockafellar1998variational} and denoting \(z=z_{\rho*}(t)\), its (regular, or equivalently, convex) subdifferential satisfies
	\[\textstyle
		\emptyset
	\neq
		\hat\partial\psi_{\rho*}^{\rm eq}(t)
	\subseteq
		\set{y}[\binom{0}{y}\in\hat\partial F(z,t)]
	=
		\set{y}[\binom{0}{y}\in\binom{\rho*-\b'(t-z)-\b'(-t-z)}{\b'(t-z)-\b'(-t-z)}]
	\subseteq
		\set{\b'(t-z)-\b'(-t-z)}.
	\]
	This shows that \(\psi_{\rho*}^{\rm eq}\) is everywhere differentiable with derivative
	\[
		(\psi_{\rho*}^{\rm eq})'(t)
	=
		\b'(t-z_{\rho*}(t))-\b'(-t-z_{\rho*}(t))
	=
		\rho*-2\b'(-t-z_{\rho*}(t)),
	\]
	as claimed, where the second identity follows from \eqref{eq:psieq:s*}.
	Notice that \eqref{eq:psieq:s*} also implies that \(\b'(\pm t-z_{\rho*}(t))\leq\rho*\) (by \(\b'>0\)); since \(\b'\) is increasing, one must have that \(\pm t-z_{\rho*}(t)\leq\b*'(\rho*)\eqqcolon\rho\), yielding the claimed bound \(z_{\rho*}(t)>|t|-\rho\).
	Notice further that \((\psi_{\rho*}^{\rm eq})'(t)<\rho*\), since \(\b'>0\), and consequently \((\psi_{\rho*}^{\rm eq})'(t)>-\rho*\) as well by symmetry.
	In particular, \(\psi_{\rho*}^{\rm eq}\) is globally \(\rho*\)-Lipschitz continuous.

	We next turn to Lipschitz differentiability.
	We first demonstrate that the mapping \(\R_+\ni t\mapsto z_{\rho*}(t)\) is nonexpansive.
	Fix \(t'>t\geq0\) and let \(\varepsilon\coloneqq z_{\rho*}(t')-z_{\rho*}(t)-(t'-t)\); since, apparently, \(z_{\rho*}(t')\geq z_{\rho*}(t)\), the claim is proven once we show that \(\varepsilon\leq0\).
	It follows from \eqref{eq:psieq:s*} that
	\begin{align*}
		\b'(t-z_{\rho*}(t))+\b'(-t-z_{\rho*}(t))
	={} &
		\b'(t'-z_{\rho*}(t'))+\b'(-t'-z_{\rho*}(t'))
	\\
	={} &
		\b'(t-z_{\rho*}(t)-\varepsilon)+\b'(t-z_{\rho*}(t)-2t'-\varepsilon).
	\end{align*}
	For the top left-hand side to equal the bottom right-hand side \(\varepsilon<0\) must hold, for otherwise \(\b'(t-z_{\rho*}(t)-\varepsilon)<\b'(t-z_{\rho*}(t))\) and \(\b'(t-z_{\rho*}(t)-2t'-\varepsilon)<b'(-t-z_{\rho*}(t))\) (since \(\b'\) is increasing and \(t'>0\)).
	This shows that \(|z_{\rho*}(t')-z_{\rho*}(t)|\leq|t'-t|\), as claimed.
	Next, observe that
	\[
		\bigl|
			(\psi_{\rho*}^{\rm eq})'(t')
			-
			(\psi_{\rho*}^{\rm eq})'(t)
		\bigr|
	=
		2\bigl|
			\b'(-t-z_{\rho*}(t))
			-
			\b'(-t'-z_{\rho*}(t'))
		\bigr|,
	\]
	and both \(-t-z_{\rho*}(t)\) and \(-t'-z_{\rho*}(t')\) are larger than \(\rho=\b*'(\rho*)<0\), as shown above.
	In particular,
	\[
		\bigl|
			(\psi_{\rho*}^{\rm eq})'(t')
			-
			(\psi_{\rho*}^{\rm eq})'(t)
		\bigr|
	\leq
		2B
		\bigl|
			t'-t
			+
			z_{\rho*}(t')-z_{\rho*}(t)
		\bigr|
	\leq
		4B|t'-t|,
	\]
	where \(B\coloneqq\sup_{(-\infty,\rho]}\b''<\infty\) is a finite quantity (by the properties of \(\b\) in \cref{ass:b}) that depends only on \(\rho*\).
	This shows that \((\psi_{\rho*}^{\rm eq})'\) is \(4B\)-Lipschitz continuous on \(\R_+\), hence on the entire \(\R\) by symmetry.

	Lastly, take \(t>0\) and observe that, since \(z_{\rho*}(t)>|t|-\rho>0\) and \(\b'\) is increasing, one has
	\(
		(\psi_{\rho*}^{\rm eq})'(t)
	=
		\rho*-2\b'(-t-z_{\rho*}(t))
	>
		\rho*-2\b'(-t)
	\).
	By symmetry, the claimed inequality \(|(\psi_{\rho*}^{\rm eq})'(t)|>\rho*-2\b'(-|t|)\) follows.
\end{appendixproof}

\begin{appendixproof}[Theorem ]{thm:rho-infty}%
		We start by observing that when \(\b>0\) the pointwise monotonic decrease of \(\psi_{\rho*}/\rho*\) and \(\psi_{\rho*}^{\rm eq}/\rho*\) as \(\rho*\) grows is apparent from the respective definitions \eqref{eq:psi} and \eqref{eq:psieq}.
		Next,
		the claimed limit of \(\psi_{\rho*}\) follows from the expression \eqref{eq:psi} and \cref{thm:b*/t*}.
	As to \(\psi_{\rho*}^{\rm eq}\),
	notice that it satisfies
	\[
		\psi_{\rho*}^{\rm eq}\bigl(t\bigr)
	\leq
		\inf_{z \geq 0}\set{
			\rho*z
			+
			2\b\bigl(|t|-z\bigr)
		}
	=
		2\psi_{\nicefrac{\rho*}{2}}\bigl(|t|\bigr),
	\]
	where the inequality owes to the fact that \(\b'>0\) (hence that \(\b\) is increasing).
		Next, fix \(t\in\R\) and plug \(z_{\rho*}(t)=z_{\rho*}(|t|)\) into \eqref{eq:psidef} to obtain a bound
		\[
			\psi_{\rho*}(|t|)
		\defeq
			\min_{z\geq0}\set{\rho*z+\b(|t|-z)}
		\leq
			\rho*z_{\rho*}(|t|)+\b(|t|-z_{\rho*}(|t|))
		=
			\psi_{\rho*}^{\rm eq}(t)
			-
			\b(-|t|-z_{\rho*}(t)).
		\]
		Observe that \(z_{\rho*}(t)\searrow |t|\) as \(\rho*\to\infty\), implying that the term \(\b(-|t|-z_{\rho*}(t))\) is positive for \(\rho*\) large enough (or for any \(\rho*\) in case \(\b>0\)).
		Thus, for any fixed \(t\) and \(\rho*\) large enough one has that
		\(
			\psi_{\rho*}^{\rm eq}(t)
		\geq
			\psi_{\rho*}(|t|)
		\),
		which combined with the previous bound results in
		\[
			\psi_{\rho*}\bigl(|t|\bigr)
		\leq
			\psi_{\rho*}^{\rm eq}\bigl(t\bigr)
		\leq
			2\psi_{\nicefrac{\rho*}{2}}\bigl(|t|\bigr)
		\]
		for all \(\rho*\) large enough (depending on \(t\)).
		Dividing by \(\rho*\) and letting \(\rho*\to\infty\), it follows from the earlier claim on \(\psi_{\rho*}\) that the lower and the upper bounds both converge to
	\(
		\bigl[|t|\bigr]_+=|t|
	\),
	demonstrating the claim.
\end{appendixproof}

		\section{Barrier properties}\label{sec:Numerics:BarrierProp}%
			The analysis in \cref{sec:barrierProp} of the barrier's properties is corroborated by examining the performance of solver variants in terms of outer iterations.
Computational results relative to the nonnegative PCA example of \cref{sec:Numerics:NonnegPCA} are depicted in \cref{fig:nonneg_pca_iter}, where (i) the log-like and log barriers have almost indistinguishable profiles and (ii) they invariably demand the solution of fewer subproblems than the inverse barrier.
Although observation (i) is supported also by the results in \cref{fig:penalty_behavior},
it does not hold in general, as pointed out with the experiments summarized in \cref{fig:lowrank_alpha} and the associated discussion.
Regardless, both observations are in agreement with the analysis in \cref{sec:barrierProp}, in particular with the log-like barrier having a better (in fact, optimal) behavior profile than the inverse barrier.
Profiles with an analogous pattern were observed also for the matrix completion task of \cref{sec:Numerics:MatrixCompletion}, but with a less pronounced discrepancy between the inverse and other barrier functions.
Interestingly, \cref{fig:nonneg_pca_iter} suggests that this assessment remains valid also for \ipprox{}'s profiles, although not being covered by our theory.

\begin{figure}[htb]
	\includetikz[width=\linewidth]{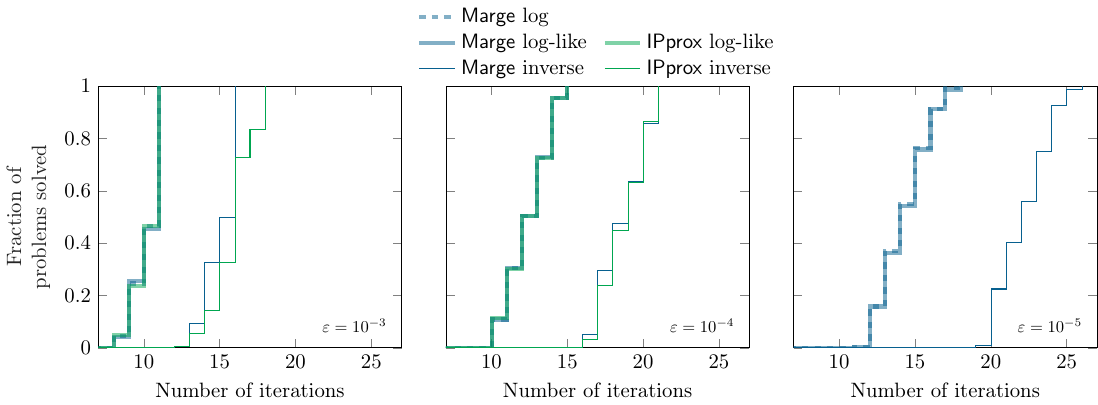}%
	\caption[]{%
		Nonnegative PCA problem \eqref{eq:nonneg_pca}, small and large instances: comparison of solvers with low, medium and high accuracy (left to right) using data profiles relative to the number of outer iterations.
		With high accuracy, results for \ipprox{} (green solid lines) are not included due to excessive runtime; in this case the other solvers are initialized with a possibly infeasible guess.
		The comparisons in terms of \emph{outer} iterations for \pippo{} (blue) and \ipprox{} (green) confirms the theoretical appeal of ``well-behaved'' logarithmic (thick dashed lines) and log-like (thick solid lines) barriers for the former method, with the two profiles almost perfectly overlapping.%
	}%
	\label{fig:nonneg_pca_iter}%
\end{figure}

	\phantomsection
	\addcontentsline{toc}{section}{References}%
	\bibliographystyle{jnsao}
	\bibliography{TeX/references.bib}

\end{document}